\documentclass[12pt]{conm-p-l}
\usepackage{amsmath}
\usepackage{amsfonts}
\usepackage{amsthm}
\usepackage{calrsfs}
\usepackage{enumerate}
\usepackage{geometry}
\usepackage{amssymb,latexsym}

\usepackage{array}
\usepackage[hidelinks]{hyperref}

\usepackage{microtype}
\usepackage[english]{babel}

\newcommand{\FF}{\mathbb F}

\newcommand{\cO}{\mathcal O}

\newcommand{\cS}{\mathcal S}
\newcommand{\cQ}{\mathcal Q}
\newcommand{\cL}{\mathcal L}

\newcommand{\cP}{\mathcal P}

\newcommand{\cC}{\mathcal C}

\newcommand{\cG}{\mathcal G}
\newcommand{\Rad}{\mathrm{Rad}\,}

\newcommand{\PG}{\mathrm{PG}}
\newcommand{\vep}{\varepsilon }
\newcommand{\PGL}{\mathrm{PGL}}

\newcommand{\Aut}{\mathrm{Aut}\,}
\newcommand{\SbS}{\mathbb S}

\newtheorem{theorem}{Theorem}[section]

\newtheorem{corollary}[theorem]{Corollary}
\newtheorem{prop}[theorem]{Proposition}

\theoremstyle{definition}
\newtheorem{remark}{Remark}

\newtheorem{definition}[theorem]{Definition}
\newcommand{\cH}{\mathcal H}

\newcommand{\cX}{\mathcal X}

\begin{document}
\title[Minimal codes from embeddings]{On minimal codes arising from projective embeddings of point-line geometries}

\author{Ilaria Cardinali}
\address{Dep. Information Engineering and Mathematics, University of Siena
  Via Roma~56, I-53100 Siena, Italy}
\email{ilaria.cardinali@unisi.it}

\author{Luca Giuzzi}
\address{D.I.C.A.T.A.M.,
Universit\`a di Brescia,
Via Branze 43, I-25123 Brescia, Italy}
\email{luca.giuzzi@unibs.it}

\subjclass[2020]{51E22, 51A50, 14M15}
\begin{abstract}
Let $\cC(\Omega)$  be the linear code arising from a projective system $\Omega$ of $\PG(V).$
Consider the point-line geometry $\Gamma=(\cP,\cL)$ and a projective embedding $\vep\colon \Gamma\rightarrow \PG(V)$ of $\Gamma.$ We show that the projective code obtained  by taking as projective system  $\Omega:=\vep(\cP)$ is minimal if
    the graph induced on the set $\Gamma\setminus\vep^{-1}(H)$ by the collinearity graph of $\Gamma$ is connected for any hyperplane $H$ of $\PG(V)$.
    As an application, we prove that Grassmann codes, Segre codes, line polar Grassmann codes of orthogonal, symplectic, hermitian type, codes arising from dual polar spaces of orthogonal and symplectic type  and codes arising from the point-hyperplane geometry of a projective space
    are minimal codes.	
\end{abstract}

\maketitle

\section{Introduction}
The aim of this note is to link the theory of point-line geometries embedded into projective spaces with the theory of projective codes, focusing on minimal codes.

Ever since Shannon's introduction of coding theory~\cite{Shannon},
a key problem in coding theory has been to determine linear codes with
good parameters, i.e. having good minimum distance and dimension with
respect to their length. One of the most fruitful ideas in
this field has been to make recourse to constructions related
to geometry.
In F. Mac Williams' seminal thesis of 1962~\cite{MacWilliams1962}, it is
discussed how it is possible
to obtain a projective code starting from a set of points spanning
a finite
projective space and it is shown how the minimum distance is related to the
hyperplane sections of this set; this approach
is fully formalized in R.C. Burton thesis of 1964~\cite{Burton}.
In this note we shall follow this approach but in
the modern presentation of~\cite{TVZ}; as a standard reference on
coding theory we refer the reader to~\cite{MacWilliams1977}.

Let $V:=\FF_q^k$ be a vector space of
dimension $k$ over the finite field $\FF_q$
with $q$ elements.
A \emph{projective system} $\Omega\subseteq\PG(V)=\PG(k-1,q)$ is a set of $N$ distinct points spanning the  $(k-1)$-dimensional projective space
$\PG(V)$. Given such a projective system, it is possible to construct a $[N,k,d]$-linear code  $\cC(\Omega)$,  uniquely defined up to monomial code
equivalence, by taking the coordinate representatives (with respect to some fixed reference system) of the points of $\Omega$
as columns of a generator matrix $G$ for $\cC(\Omega).$
Such a code is called a \emph{projective code}.

The parameters $[N,k,d]$ of $\cC(\Omega)$ depend only on  the pointset
$\Omega$: the length $N$ is the size of $\Omega$ and the dimension $k$
is the (vector) dimension of the space spanned
by $\Omega$.
As any word $c$ of $\cC(\Omega)$ is of the form
$c=mG$ for some row vector $m\in\FF_q^k$,
it is straightforward to see that the number of
zeroes in $c=(c_i)_1^N$ is the same as the number of points of $\Omega$ lying on the hyperplane (of $\PG(k-1,q)$) of equation
$m\cdot x=0,$ where $m\cdot x=\sum_{i=1}^k m_ix_i$ and
$m=(m_i)_1^k$, $x=(x_i)_1^k$. So, the spectrum of the cardinalities of the complements of the intersections of $\Omega$
with the hyperplanes of $\PG(k-1,q)=\PG(\langle \Omega\rangle)$
provides the list of the weights of
$\cC(\Omega)$. In particular, the minimum Hamming distance $d$ of $\cC$ is
also the minimum Hamming weight and
\begin{equation}\label{min distance 1}
	d =N-\max_{H}|\Omega\cap H|,
\end{equation}
as $H$ ranges among all hyperplanes of  $\PG(\langle\Omega\rangle).$

This approach of constructing codes from projective systems has been  applied to describe and characterize
several families of projective codes, such as, for example,
Grassmann codes \cite{Ghorpade2001,Nogin1996},
Flag variety codes~\cite{Rodier2003, CL-long1, CL-long2},
Schubert codes
\cite{Ghorpade2013,Ghorpade2005},  polar Grassmann codes \cite{
  Cardinali2016b, Cardinali2018, Cardinali2018c, Cardinali2016a}.

J.\ Massey in~\cite{Massey} introduced the notion of \emph{minimal codewords}
(or minimal vectors) of a linear code $\cC$, primarily
in order to devise  efficient secret sharing schemes.
Minimal codewords have also numerous applications besides
cryptography; e.g.\ they
are relevant for bounding the complexity of
some decoding algorithms (see~\cite{Ashikhmin1998})
or for studying the matroid of the code~\cite{Gorla2023}.

A code is \emph{minimal} if any codeword $c\in \cC $ has the property that  for any other codeword  $c'\in\cC$ such that $\mathrm{supp}(c')\subseteq\mathrm{supp}(c)$
there exists $\lambda\in\FF_q$ such that  $c'=\lambda c.$

Ashikhmin and Barg in~\cite[Lemma 2.1, part (3)]{Ashikhmin1998} determined
a well known and widely used
sufficient condition for a code $\cC$ to be minimal
which depends only on the minimum and maximum weight
(respectively $w_{\min}$ and $w_{\max}$) of the non-null
codewords of $\cC$:
\begin{equation}
	\label{ab-bound}
	\frac{w_{\max}}{w_{\min}}<\frac{q}{q-1}.
\end{equation}

Since the use of bound~\eqref{ab-bound} requires the knowledge of the minimum and the maximum weight of the code, it is not always
easily applicable. Determining the weights of a code
is an NP-hard problem~\cite{Berlekamp1978}, so, in general,
also determining the minimal codewords for
non-minimal codes is considered to be a very hard problem.

In this paper, relying on the geometrical characterization of the minimality due to~\cite{ABN-2022}, we will provide a sufficient condition for a projective code $\cC(\Omega)$ to be minimal by investigating some geometrical properties of the projective system $\Omega.$ We will also
exhibit a family of codes which are minimal but do not satisfy condition~\eqref{ab-bound}.

Suppose  $\cC(\Omega)$ is a projective code arising from the projective system $\Omega$, then $\cC(\Omega)$ is minimal if and only if $\Omega$ is a \emph{cutting blocking set with respect to hyperplanes}, i.e.   any hyperplane $H$ of $\PG(\langle\Omega\rangle)$ is spanned by its section with $\Omega$ (i.e.$ \langle H\cap\Omega\rangle=H$); see~\cite{ABN-2022,Alfarano2022,Bonini2021, DGMP2011,Tang2021}.

In this paper we will focus on the linear codes constructed by taking as projective system, the image of an arbitrary point-line geometry $\Gamma$ under a projective embedding. In order to state our main results we will briefly recall what we mean by projective embedding of a point-line geometry, referring the reader to Section~\ref{Sec point-line geometries} for more details.

Let $\Gamma=(\cP,\cL)$ be a point-line geometry and $\PG(V)$ be
a projective space.
An injective map  $\vep:\Gamma\to\PG(V)$ 
from the pointset $\cP$ to the pointset of $\PG(V)$ is
a \emph{projective embedding} of $\Gamma$
if it sends any line of $\Gamma$ onto a projective line of
$\PG(V)$ and $\langle\vep(\cP)\rangle=\PG(V).$
The vector dimension of $V$ is called the \emph{dimension} of $\vep$.
% The \emph{embedding rank} of $\Gamma$ is
%the maximum of the dimensions of its projective embeddings.

A set $\cH$ of points of $\cP$ is a \emph{geometric hyperplane} of $\Gamma$ if any line of $\Gamma$ either is contained in it or
intersects $\cH$ in just one point.

If $H$ is a projective hyperplane of $\PG(V)$ then $\vep^{-1}(H)$ is always a geometric hyperplane of $\Gamma.$ We say that a geometric hyperplane $\cH$ of $\Gamma$ \emph{arises form the embedding} $\vep$ if there exists a hyperplane $H$ of $\PG(V)$ such that $\cH=\vep^{-1}(H\cap \vep(\cP)).$

 If we take $\Omega:=\vep(\cP)$ as projective system of $\PG(V)$ and consider the projective code $\cC(\Omega)$, relying on the properties of projective embeddings, we have an immediate interpretation of the main parameters of $\cC(\Omega)$: the dimension of $\cC(\Omega)$ is the dimension of the embedding $\vep$, the length of $\cC(\Omega)$ is the cardinality of $\cP$ and the minimum distance of $\cC(\Omega)$ amounts to the difference between the size of $\cP$ and the maximal cardinality of a geometrical hyperplane of $\Gamma$ arising from $\vep.$

In this paper we will investigate some the properties of $\Gamma$ that guarantee the minimality of $\cC(\vep(\Gamma)).$
These are the main results.

\begin{theorem}\label{main thm}
  Let $\Gamma=(\cP,\cL)$ be a point-line geometry and $\vep:\Gamma\to\PG(V)$ be
  a projective embedding of $\Gamma.$
  If for any hyperplane $H$ of $\PG(V)$
  the graph induced by the collinearity graph of $\Gamma$ on $\cP\setminus\vep^{-1}(H)$  is connected, then the
  projective code $\cC(\vep(\cP))$ is minimal.
\end{theorem}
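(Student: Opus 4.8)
The plan is to use the characterization stated just before the theorem: the code $\cC(\Omega)$ with $\Omega=\vep(\cP)$ is minimal if and only if $\Omega$ is a cutting blocking set with respect to hyperplanes, i.e. every hyperplane $H$ of $\PG(\langle\Omega\rangle)=\PG(V)$ satisfies $\langle H\cap\Omega\rangle=H$. So the entire task reduces to showing that the connectivity hypothesis on the induced collinearity graphs forces this spanning property. I would fix an arbitrary hyperplane $H$ of $\PG(V)$ and aim to prove $\langle H\cap\vep(\cP)\rangle=H$; equivalently, since $H$ has codimension $1$, it suffices to show that $H\cap\vep(\cP)$ is not contained in any hyperplane other than $H$ itself, i.e. that $\dim\langle H\cap\vep(\cP)\rangle=\dim H$.

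First I would set up a contradiction: suppose $\langle H\cap\vep(\cP)\rangle=H'$ is a proper subspace of $H$. Then there exists a second hyperplane $H''\ne H$ with $H'\subseteq H''$, so every point of $\vep(\cP)$ lying in $H$ also lies in $H''$. The key observation I would exploit is the behaviour of the two geometric hyperplanes $\cH=\vep^{-1}(H\cap\vep(\cP))$ and $\cH''=\vep^{-1}(H''\cap\vep(\cP))$ relative to collinearity. I would next translate the spanning failure into a statement purely about the geometry $\Gamma$: the points of $\cP\setminus\cH$ all map outside $H$, and I want to use the connectivity of the induced collinearity graph on $\cP\setminus\vep^{-1}(H)=\cP\setminus\cH$ to derive a contradiction with the existence of $H''$.

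The mechanism I expect to be decisive is the following line–intersection argument. Take a line $\ell\in\cL$ joining two collinear points $x,y\in\cP\setminus\cH$; its image $\vep(\ell)$ is a projective line meeting $H$ in at most one point, and since $\vep(x),\vep(y)\notin H$ the line $\vep(\ell)$ either misses $H$ or meets it in a single point $P_\ell$ outside of $H''$ (because $\vep(\ell)\not\subseteq H$ forces $\vep(\ell)\cap H''$ to be that same point, and one checks it cannot be forced into $H'$). Walking along a path in the connected graph on $\cP\setminus\cH$, I would accumulate such intersection points and show they are pinned to a fixed subspace; connectivity guarantees we can reach every vertex of $\cP\setminus\cH$, and hence control the position of all of $\vep(\cP\setminus\cH)$ relative to the pencil of hyperplanes through $H'$. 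The upshot is that $H$ would be distinguished among all hyperplanes through $H'$ by the geometry of a connected component, contradicting the existence of the alternative $H''$.

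The main obstacle I anticipate is exactly this last step: turning "the induced graph is connected" into "$H$ is the unique hyperplane through $H'$ containing $H\cap\vep(\cP)$." The delicate point is handling lines $\ell$ with both endpoints outside $\cH$ but whose image line $\vep(\ell)$ meets $H$ (so $\ell\cap\cH\ne\emptyset$ even though the two chosen endpoints avoid $\cH$): I must argue that the single intersection point $\vep(\ell)\cap H$ is then \emph{forced} to lie in $H''$ as well, so that the pencil of hyperplanes through $H'$ cannot separate $H$ from $H''$ along any edge of the connected graph. Making this forcing rigorous — propagating the coincidence $H\cap\vep(\ell)=H''\cap\vep(\ell)$ across all edges and thence, by connectivity, concluding $H=H''$ — is where the real work lies, and I would structure the proof around a clean lemma asserting that two hyperplanes agreeing on every line-intersection coming from a connected subgraph must coincide.
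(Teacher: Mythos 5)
Your reduction via Proposition~\ref{equiv} and the contradiction setup (a hyperplane $H$ with $H':=\langle H\cap\vep(\cP)\rangle\subsetneq H$, and a second hyperplane $H''\supseteq H'$, $H''\neq H$) are sound, and they match the skeleton of the paper's argument. But your central mechanism is stated backwards, and the step you yourself flag as "where the real work lies" is precisely the step that fails as designed. First, since a projective embedding maps each line of $\Gamma$ \emph{onto} a projective line, for any edge $\{x,y\}$ of the induced graph on $\cP\setminus\cH$ (where $\cH:=\vep^{-1}(H)$) the image $\vep(\ell)$ of the joining line $\ell$ always meets $H$ (a line of $\PG(V)$ never "misses" a hyperplane), and the intersection point equals $\vep(p)$ for the unique $p\in\ell\cap\cH$; in particular it lies in $H\cap\vep(\cP)\subseteq H'\subseteq H''$. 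So $P_\ell$ is forced \emph{into} $H''$, not "outside of $H''$" as you claim -- and this containment is the only thing that makes any propagation possible. Second, the target of your propagation, namely the conclusion $H=H''$, is unattainable: in the very situation you are trying to refute, every hyperplane through $H'$ has the same trace on $\vep(\cP)$ as $H$ does, so no amount of "agreement on line intersections" can ever separate $H$ from an arbitrary $H''$ through $H'$. A correct argument must instead aim the propagation at a violation of $\langle\vep(\cP)\rangle=\PG(V)$.

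Here is how the completion goes. Fix $x_0\in\cP\setminus\cH$ and set $S:=\langle H',\vep(x_0)\rangle$; since $H'$ has codimension at least $2$, $S$ is a \emph{proper} subspace of $\PG(V)$. If $x,y\in\cP\setminus\cH$ are collinear via $\ell$ and $\vep(x)\in S$, then, with $p$ the unique point of $\ell\cap\cH$, we get $\vep(y)\in\vep(\ell)=\langle\vep(x),\vep(p)\rangle\subseteq S$ because $\vep(p)\in H'\subseteq S$. Connectivity of the induced graph then yields $\vep(\cP\setminus\cH)\subseteq S$, while $\vep(\cH)\subseteq H'\subseteq S$, so $\vep(\cP)\subseteq S$, contradicting the spanning property of the embedding. (Equivalently, choose $H''$ through $H'$ \emph{and} through some $\vep(u)$ with $u\notin\cH$ -- your proposal omits this choice -- and propagate membership in $H''$ along edges to get $\vep(\cP)\subseteq H''$.) Note that this completed argument is essentially an inline proof of Proposition~\ref{cmhp}: the paper instead quotes that result of Shult (connectivity of $\cP\setminus\cH$ is equivalent to maximality of the geometric hyperplane $\cH$) and combines it with Proposition~\ref{ptv}, whose proof is exactly the pencil-of-hyperplanes argument, carried out at the level of geometric hyperplanes rather than edge by edge.
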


\begin{corollary} \label{main cor}
  \label{c:min}
   If $\Gamma=(\cP,\cL)$ is a point-line geometry whose
   geometric hyperplanes are all maximal then
   the code $\cC(\vep(\Gamma))$ is minimal for \emph{any}
   projective embedding   $\vep$ of $\Gamma$.
\end{corollary}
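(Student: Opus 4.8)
The plan is to deduce the corollary directly from Theorem~\ref{main thm} by verifying its connectivity hypothesis for every projective hyperplane. Fix a hyperplane $H$ of $\PG(V)$ and set $\cH:=\vep^{-1}(H)$. As recalled above, $\cH$ is a geometric hyperplane of $\Gamma$; moreover, since $\langle\vep(\cP)\rangle=\PG(V)$ no hyperplane of $\PG(V)$ contains $\vep(\cP)$, so $\cH$ is a \emph{proper} subset of $\cP$, and by hypothesis it is a maximal subspace of $\Gamma$. It therefore suffices to show that the graph induced by the collinearity graph of $\Gamma$ on $\cP\setminus\cH$ is connected.

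I would argue by contradiction. Suppose the induced graph on $\cP\setminus\cH$ is disconnected and write $\cP\setminus\cH=A\sqcup B$ with $A,B$ nonempty and with no point of $A$ collinear to a point of $B$. The key step is to prove that $S:=\cH\cup A$ is a subspace of $\Gamma$. Take any line $\ell$ meeting $S$ in at least two points. If $\ell\subseteq\cH$ then $\ell\subseteq S$. Otherwise, since $\cH$ is a geometric hyperplane, $\ell$ meets $\cH$ in exactly one point, so at least one of the two points of $\ell\cap S$ lies in $A$; were some further point of $\ell$ to lie in $B$, then this point and the point of $\ell$ in $A$ would be two collinear points, one in $A$ and one in $B$, contradicting the absence of $A$--$B$ edges. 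Hence every point of $\ell$ outside $\cH$ lies in $A$, whence $\ell\subseteq\cH\cup A=S$. This proves that $S$ is a subspace.

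Finally I would derive the contradiction from the maximality of $\cH$. Since $A\neq\emptyset$ we have $\cH\subsetneq S$, and since $B\neq\emptyset$ with $B\cap S=\emptyset$ we have $S\subsetneq\cP$. Thus $S$ is a proper subspace strictly containing the maximal subspace $\cH$, which is impossible. Therefore the induced graph on $\cP\setminus\vep^{-1}(H)$ is connected for every hyperplane $H$ of $\PG(V)$, and Theorem~\ref{main thm} yields the minimality of $\cC(\vep(\cP))$.

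I expect the only delicate point to be the verification that $S=\cH\cup A$ is a subspace; everything else is bookkeeping. The argument hinges on combining two facts—that a geometric hyperplane meets every non-contained line in exactly one point, and that $A$ and $B$ are collinearity-disconnected—to force every line to remain on a single side, together with correctly matching the intended meaning of \emph{maximal} (maximal proper subspace) to the inclusions $\cH\subsetneq S\subsetneq\cP$.
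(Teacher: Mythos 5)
Your proof is correct, but it takes a genuinely more roundabout route than the paper. The paper deduces the corollary in one line from Proposition~\ref{ptv}: if \emph{every} geometric hyperplane of $\Gamma$ is maximal, then in particular every hyperplane arising from $\vep$ is maximal, and Proposition~\ref{ptv} gives minimality directly --- no connectivity argument is needed at all. You instead route through Theorem~\ref{main thm}, which forces you to establish the implication ``maximal hyperplane $\Rightarrow$ connected complement''; this is exactly the converse direction of Proposition~\ref{cmhp} (Shult's Lemma~4.1.1), which the paper cites but does not need for this corollary. Your from-scratch proof of that implication --- building the subspace $S=\cH\cup A$ from a union of $\cH$ with one side of a disconnection and contradicting maximality --- is correct and is essentially the standard argument for that direction of Shult's lemma; the case analysis on lines (contained in $\cH$, or meeting it in one point) is handled properly, and you rightly note that $\cH$ is proper because $\vep(\cP)$ spans $\PG(V)$. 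The net effect, however, is a logical round trip: you convert maximality into connectivity, and then Theorem~\ref{main thm} (whose own proof applies Proposition~\ref{cmhp} in the opposite direction) converts connectivity back into maximality before invoking Proposition~\ref{ptv}. What your approach buys is self-containedness: the corollary follows from the statement of Theorem~\ref{main thm} plus elementary arguments, with no appeal to Proposition~\ref{ptv} or to the literature. What the paper's approach buys is economy: the corollary is an immediate specialization of the same proposition that underlies the theorem.
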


In the second part of the paper we will examine some remarkable families of point-line geometries for which Theorem~\ref{main thm} and Corollary~\ref{main cor} hold, thus obtaining
\begin{theorem} \label{cor minimality}
  \label{c:minC}
Grassmann codes; Segre codes; line polar Grassmann codes of orthogonal, symplectic, hermitian type;  codes arising from dual polar spaces of orthogonal and symplectic type and codes arising form the point-hyperplane geometry of a projective space are all minimal codes.
\end{theorem}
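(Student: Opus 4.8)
The plan is to deduce Theorem~\ref{cor minimality} from repeated applications of Theorem~\ref{main thm}, handling each of the four families of geometries in turn. The common mechanism I would isolate first is the following reformulation of the hypothesis of Theorem~\ref{main thm}: for a geometric hyperplane $\cH=\vep^{-1}(H)$ arising from an embedding, the subgraph induced by the collinearity graph on $\cP\setminus\cH$ is connected precisely when $\cH$ is a \emph{maximal} subspace of $\Gamma$. Indeed, if $\cH\subsetneq S\subsetneq\cP$ were an intermediate subspace, then any line joining a point $p\in S\setminus\cH$ to a collinear point $p'\notin\cH$ is not contained in $\cH$, hence meets $\cH$ in a single point $x$; since $x\in\cH\subseteq S$ and $p\in S$ with $p\neq x$, the line lies in $S$ and so $p'\in S$. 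Thus $S\setminus\cH$ is a nonempty, proper union of connected components of $\cP\setminus\cH$, forcing disconnectedness; the converse runs along the same lines. Consequently, for each geometry it suffices to prove that every geometric hyperplane arising from the embedding under consideration is a maximal subspace, and when every geometric hyperplane of $\Gamma$ is maximal one appeals directly to Corollary~\ref{main cor} uniformly over all embeddings.

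First I would dispose of the Grassmann and Segre cases, where the collinearity graph is especially transparent. For the Grassmannian $\mathcal{G}_{n,k}$ embedded by the Pl\"ucker map into $\PG(\wedge^k V)$, collinearity is the Grassmann graph, and I would show that deleting the $k$-subspaces lying on any projective hyperplane $H$ leaves the graph connected: from any $k$-subspace off $H$ one reaches a fixed reference subspace by a chain of elementary one-dimensional moves that can be chosen to avoid $H$. For the Segre product the collinearity graph is a Cartesian product of complete graphs, whose vertex set minus a hyperplane section stays connected by an analogous coordinate-by-coordinate argument. In both cases the cleanest route is to invoke the known classification of maximal subspaces and observe that geometric hyperplanes are among them.

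The substantive work, and the main obstacle, is the family of line polar Grassmannians of orthogonal, symplectic and hermitian type. Here I would rely on the structure theory of polar spaces and of their natural projective embeddings to show that a geometric hyperplane arising from such an embedding cannot be enlarged to a proper subspace, equivalently that its complement stays connected. Two difficulties must be confronted: the embedding need not be universal, so one cannot assume every geometric hyperplane is the pullback of a hyperplane of the full embedding space; and the orthogonal case in characteristic~$2$ behaves exceptionally, the quadric and its nucleus requiring separate care. I expect to treat these by exploiting the transitivity of the isometry group on the relevant flags to normalize to a single hyperplane $H$, and then to connect an arbitrary singular line off $H$ to a fixed one by passing through singular planes or symplecta chosen entirely inside the complement, checking the low-rank and small-field degeneracies by hand.

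Finally, for the point-hyperplane geometry of $\PG(n,q)$ I would verify the hypothesis of Corollary~\ref{main cor} directly, establishing that its geometric hyperplanes are all maximal subspaces, so that \emph{every} projective embedding produces a minimal code. Assembling the four cases then yields Theorem~\ref{cor minimality}. The genuinely delicate point throughout is the connectivity analysis in the polar setting; the remaining families reduce to connectivity checks on well-understood distance-regular graphs.
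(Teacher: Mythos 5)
Your reduction is exactly the paper's: the equivalence you sketch between connectedness of $\cP\setminus\cH$ and maximality of $\cH$ is Proposition~\ref{cmhp} (quoted from Shult), and the plan ``show every hyperplane arising from the embedding is maximal, then conclude minimality'' is precisely how the paper argues, via Proposition~\ref{ptv} together with Theorem~\ref{main thm} and Corollary~\ref{main cor}. Where you diverge is in how the four families are settled: the paper does not re-prove any connectivity statement but imports it in each case --- De Bruyn~\cite{Bart} for the maximality of all hyperplanes of $\cG_{n,k}$, Kasikova~\cite{Kasikova} for line polar Grassmannians, Van Maldeghem~\cite{VanMaldeghem2024} (spans of hyperplane images, combined with the cutting-set criterion of Proposition~\ref{equiv}) for Segre geometries, and Pasini~\cite{Pasini24} for the point-hyperplane geometry. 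Your coordinate-by-coordinate argument for the Segre case is sound and can be completed elementarily (when both single-coordinate switches land in the hyperplane, route through $[y+y']$ in the second factor), so there you actually have a more self-contained proof than the paper's. Your Grassmann sketch is also plausible, though note that ``the known classification of maximal subspaces'' of $\cG_{n,k}$ (stars and tops) has nothing to do with hyperplanes; what you need is exactly De Bruyn's lemma that geometric hyperplanes are maximal \emph{as proper subspaces}.

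The genuine gaps are in the two hardest families. For the line polar Grassmannians you are proposing, in effect, to re-prove Kasikova's theorem, and the sketch would not work as described: the isometry group is \emph{not} transitive on hyperplanes of $\PG(\bigwedge^2V)$ --- hyperplanes correspond, up to scalars, to bilinear/alternating forms, which fall into many orbits, and this orbit structure is precisely what makes the connectivity analysis hard --- so you cannot ``normalize to a single hyperplane $H$.'' Moreover, the known connectivity result requires all lines to have at least $4$ points, i.e.\ $\FF_q\neq\FF_2$ (the paper's Proposition~\ref{min2} carries exactly this restriction), so your plan to dispose of ``small-field degeneracies by hand'' would collide with what is, as far as the literature cited in the paper goes, an open case rather than a routine check. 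For the point-hyperplane geometry, ``verify directly that all geometric hyperplanes are maximal'' is not a verification but the full content of Pasini's Theorem~1.5~\cite{Pasini24}, a recent research-level result for which you offer no argument. So the skeleton is right, but as written the proposal lacks proofs of the two deepest ingredients, and the strategy proposed for the polar case fails at its first step.
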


\begin{remark}
 Projective codes can be defined as those codes whose dual has
  minimum distance at least $3$ or, equivalently, codes where no
  two columns of the generator matrix are proportional.

  Since we are interested in codes arising from projective embeddings
  of geometries, we assume throughout the paper that $\Omega$ is
  a set of $N$ distinct projective points; as such,
  as mentioned before,  $\cC(\Omega)$ is a projective code.

  It is also possible to pose the problem of minimality in the
  more general context of \emph{non-degenerate} codes, i.e. codes
  whose generator matrix does not contain any zero column.
  In this case the set $\Omega$ must be replaced by a multi-set,
  thus assigning to any point a multiplicity, possibly larger than $1$.
  
  The study of the minimality of non-degenerate codes reduces
  to that of projective codes; see Theorem~\ref{ndc}. So,
  apart from the discussion in
  Section~\ref{nd-proj} we shall here limit our
  focus to projective codes and always silently assume
  $\Omega$ to be a set.
\end{remark}

 \subsection{Structure of the paper}
 The structure of the paper is the following. In Section~\ref{prelim}
we recall some basic notions about point-line geometries and their
embeddings; we also fix the notation we shall use throughout the paper.
In Section~\ref{nd-proj} we investigate the link between minimality
for projective and non-degenerate codes.
In Section~\ref{ssmin} 
we point out properties which can be used in order
to prove the minimality of codes arising from embeddings of geometries, by 
considering the link between minimal codewords and properties of geometric hyperplanes.
In Section~\ref{emb-codes} we consider the codes arising from the
Grassmann geometry, the Segre geometry, the polar Grassmann geometries and the point-hyperplane geometry and prove that these codes are minimal (from Theorem~\ref{main thm} and Corollary~\ref{main cor}). For these families of codes we also survey the current state of the art related to the knowledge of their main parameters. In the last section we list
some open problems.

\section{Preliminaries}~\label{prelim}
\subsection{Embeddable point-line geometries}\label{Sec point-line geometries}
We refer to~\cite{Shult1995} for the general theory of point-line geometries. Here, we recall only those properties that will be of use in the framework of projective codes.

A \emph{point-line geometry} is a pair
$\Gamma=(\cP,\cL)$ where $\cP$ is a non-empty set whose elements are called \emph{points}, $\cL$ is a set of subsets of $\cP$ called \emph{lines} and incidence is given by symmetrized inclusion. It is required that any line contains at least two points and for any two distinct points there exists at most one line through them.

Two points $p,q\in\cP$ with $p\neq q$ are \emph{collinear}
if they belong to a common line.%, in this  case we write $p\perp q$.
The \emph{collinearity graph} of $\Gamma$ is the graph $G_{\Gamma}$ whose
vertices are the points of $\Gamma$ and such that an edge connects two vertices if and only if corresponding points are collinear.
We say that the geometry $\Gamma$ is \emph{connected} if $G_{\Gamma}$ is a connected graph.

A \emph{subspace} of a point-line geometry $\Gamma$ is a non-empty subset $\cX$ of the point-set $\cP$ of $\Gamma$ such
that, for every line $\ell$ of $\Gamma$, if $|\ell \cap \cX| > 1$ then $\ell \subseteq X$. A proper subspace $\cH$ of $\Gamma$ is
said to be a \emph{geometric hyperplane} of $\Gamma$ (a hyperplane of $\Gamma$< for short) if
for every line $\ell$ of
$\Gamma$, either $\ell \subseteq \cH$ or $|\ell  \cap \cH | = 1$.

By definition, hyperplanes are proper subspaces of $\Gamma$.
However, in general,
they might not necessarily be \emph{maximal} with respect to
inclusion among the set of all the proper subspaces of $\Gamma$.
In particular, a geometric hyperplane might be a proper subset of
another geometric hyperplane.

An important characterization of \emph{maximal hyperplanes}
is the following, see~\cite[Lemma 4.1.1]{Shult2011}.
\begin{prop}
  \label{cmhp}
  A geometric hyperplane $\cH$ of a point-line geometry $\Gamma$ is
  a maximal subspace if and only if the collinearity
  graph induced by $G_{\Gamma}$
  on $\cP\setminus \cH$ is connected.
\end{prop}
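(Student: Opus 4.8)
The plan is to prove both implications by contraposition, the common engine being the defining property of a geometric hyperplane: every line $\ell$ with $\ell\not\subseteq\cH$ meets $\cH$ in exactly one point. First I would record the elementary consequence that if $p,q\in\cP\setminus\cH$ are collinear on a line $\ell$, then $\ell\not\subseteq\cH$, so $\ell$ carries exactly one point of $\cH$ and all of its remaining points lie outside $\cH$; in particular the points of $\ell$ outside $\cH$ are pairwise collinear, i.e.\ they form a clique in the collinearity graph. This single observation drives everything that follows.

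For the direction ``maximal $\Rightarrow$ connected'' I argue contrapositively. Suppose the graph induced on $\cP\setminus\cH$ is disconnected and write $\cP\setminus\cH=A\sqcup B$ with $A,B$ non-empty and no edge joining $A$ to $B$. I would then show that $\cS:=\cH\cup A$ is a subspace strictly between $\cH$ and $\cP$, so that $\cH$ is not maximal. To verify the subspace condition, take a line $\ell$ with $|\ell\cap\cS|>1$: if $\ell\subseteq\cH$ there is nothing to prove, and otherwise $|\ell\cap\cH|=1$, which forces $\ell$ to contain a point of $A$. By the preliminary remark the points of $\ell$ outside $\cH$ are pairwise collinear, hence cannot be split between $A$ and $B$; since at least one of them lies in $A$, they all do, giving $\ell\subseteq\cH\cup A=\cS$. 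As $A\neq\emptyset$ we have $\cH\subsetneq\cS$, and as $B\neq\emptyset$ (with $B$ disjoint from $\cS$) we have $\cS\subsetneq\cP$.

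For the converse ``connected $\Rightarrow$ maximal'', again contrapositively, suppose $\cH$ is not maximal, so that there is a proper subspace $\cS$ with $\cH\subsetneq\cS\subsetneq\cP$. I would set $A:=\cS\setminus\cH$ and $B:=\cP\setminus\cS$, which partition $\cP\setminus\cH$ into two non-empty parts, and show there is no edge between them. Indeed, a line $\ell$ joining $a\in A$ to $b\in B$ has both of these points outside $\cH$, hence meets $\cH$ in a single point $h$; since $h\in\cH\subseteq\cS$ and $a\in\cS$ with $h\neq a$, closure of the subspace $\cS$ forces $\ell\subseteq\cS$, contradicting $b\notin\cS$. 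Thus the induced graph on $\cP\setminus\cH$ is disconnected.

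The only delicate point, and the one I would treat most carefully, is the closure verification in the first direction: it rests precisely on the observation that the trace of a line outside $\cH$ is a clique in the collinearity graph and therefore lies entirely inside one of the parts $A,B$. Everything else is a direct application of the geometric-hyperplane and subspace axioms, together with the bookkeeping that $\cH$, $A$ and $B$ are pairwise disjoint.
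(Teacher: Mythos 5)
Your proof is correct and complete: both contrapositive implications are verified without gaps, and the two key points — that the trace of a line off $\cH$ is a clique and hence cannot straddle the parts $A$ and $B$, and that closure of the intermediate subspace $\cS$ forces $\ell\subseteq\cS$ once it contains the point $h\in\ell\cap\cH$ together with $a$ — are exactly the right things to check. Note that the paper itself gives no proof of this proposition, citing it as Lemma~4.1.1 of \cite{Shult2011}; your argument (building the proper subspace $\cH\cup A$ from one side of a disconnection, and conversely splitting $\cP\setminus\cH$ along a proper subspace $\cS\supsetneq\cH$) is the standard proof of that lemma, so it correctly fills in what the paper leaves to the reference.
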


% Per ora lo commento Given a set of points $X$ of a point-line geometry $\Gamma$,
%the subspace $\langle X\rangle$
%generated by $X$ is the smallest subspace of $\Gamma$
%containing $X$. The \emph{generating rank} of $\Gamma$ is
%the number
%\[ gr(\Gamma):=\min\{ |X|\colon \langle X\rangle=\cP \}. \]

Given  a projective space $\PG(V)$, a \emph{projective embedding $\vep\colon \Gamma \rightarrow \PG(V)$} is an injective mapping $\vep$ from the point-set  $\cP$ of $\Gamma$  to the point-set of a projective space $\PG(V)$  such that $\vep(\cP)$ spans $\PG(V)$ and it maps lines of $\Gamma$ onto projective lines of $\PG(V)$. The \emph{dimension of} $\vep$ is the (vector) dimension of $V$.

%Per ora lo commento. The \emph{embedding rank} of an embeddable geometry $\Gamma$ is the maximum dimension of a projective embedding of $\Gamma$:
%\[ er(\Gamma):=\max\{ \dim(\vep)\colon \vep:\Gamma\to\Sigma
%  \text{ projective embedding} \}.
%\]
%The following relation always holds
%\[ er(\Gamma)\leq gr(\Gamma). \]

Given a projective embedding $\vep\colon \Gamma \rightarrow \PG(V)$
of $\Gamma$ and a projective hyperplane $H$ of $\PG(V)$, the point set $\cH:=\vep^{-1}(H)\subseteq\Gamma$ is clearly  a geometric hyperplane of $\Gamma$ and $\vep( \vep^{-1}(H)) = H\cap  \vep(\cP)$.

We say that a geometric hyperplane $\cH$ of $\Gamma$
\emph{arises from $\vep$} if
% $\vep(\cH)$
% spans a projective hyperplane of $\PG(V)$ and
$\cH=\vep^{-1}(H)\subseteq\Gamma$ for some hyperplane $H$ of $\PG(V)$.
Note that, in general, there might exist hyperplanes of $\Gamma$ which do not arise from a specific embedding or even from any embedding at all.
In particular, when $\vep(\cH)$ spans $\PG(V)$ then $\cH$
certainly does not arise from $\vep$.

Given a point-line geometry  $\Gamma$  and a projective embedding $\vep\colon \Gamma\rightarrow \PG(V)$ of $\Gamma$, it is now clear from the  definition  that the set
\[\vep(\cP)=\{\vep(p)\colon p\in \cP\}\]
 is a projective system of $\PG(V).$ Put $\Omega:=\vep(\cP)$ and consider the linear code $\cC(\Omega)$ arising from it. Then the length of $\cC(\Omega)$ is the cardinality $|\cP|$ of the pointset of $\Gamma$, the dimension of $\cC(\Omega)$ is precisely the dimension $\dim(\vep)$ of the embedding $\vep$ and the minimal distance is the difference between $|\cP|$ and the maximal cardinality of a geometric hyperplane of $\Gamma$ arising from $\vep.$

\subsection{Cutting blocking sets}\label{sec 2.2}
Given a $(n+1)$-dimensional vector space $V=V(n+1,q)$ over the finite field $\FF_q,$ we refer to a
$n$-dimensional projective space defined over $V$ with the notation
$\PG(n,q)=\PG(V). $ 
When we need to distinguish between a non-zero vector $x$ of $V$ and the point
of $\PG(V)$ represented by it, we denote the latter by $[x]$. We extend
this convention to subsets of $V$. If $X \subseteq V \setminus \{0\}$ then $[X] := \{[x] | x \in X\}$. The
same conventions will be adopted for vectors and subsets of the dual space $V^*$  of $V.$ In
particular, if $\xi \in V^* \setminus \{0\}$ then $[\xi]$ is the point of $\PG(V^*)$ which corresponds
to the hyperplane $[\ker(\xi)]$ of $\PG(V )$. We shall freely take $[\xi]$ as a name for $[\ker(\xi)]$ and put $[0]:=\PG(V)$.

The notion of \emph{strong blocking set} has been introduced
in~\cite{DGMP2011}.

\begin{definition}  \label{cutting bs}
A set $X$ of points of $\PG(n,q)$ is a
\emph{cutting blocking set (with respect to hyperplanes)} or
a \emph{strong cutting set with respect to hyperplanes}
if it meets each hyperplane in a spanning set.
\end{definition}

Recently, several bounds on the minimal size of
cutting blocking sets and also some constructions of small minimal blocking sets with respect to hyperplanes have been found (see e.g. \cite{Alfarano2022, Alon2024, Bishnoi2024,Scotti2024}).

%There are several bounds known on the minimal size of
%a strong blocking set, in particular it
%has been shown that for all prime powers $q$
%there is a constant $c(q)>1$ such that $|X|\geq (c(q)-1)(q+1)(k-1)$;
%see~\cite{Alfarano2022,Bishnoi2024,Scotti2024}.
%is lower bound on the size of a strong
%blocking set. Constructions of small minimal blocking sets
%are also in~\cite{Alon2024}.
%In general, however, there might be also sets which are
%much larger than this lower bound and still are not strong
%blocking sets.

\subsection{Minimal codes}
Given a $[N,k,d]_q$-linear code $\cC$, regarded as a subspace of $\FF_q^N$, and a codeword $c=(c_i)_1^N \in\cC$, the \emph{support} of $c$ is defined as the set
$\mathrm{supp}(c):=\{ i\in\{1,\dots,N\}: c_i\neq 0\}$.
The \emph{weight} of $c$ is
$wt(c):=|\mathrm{supp}(c)|$. It is well known that for linear
codes the minimum Hamming distance $d$ between any two distinct codewords is the same as its minimum weight.

Following Massey~\cite{Massey}, minimal codewords  and minimal codes are defined as follows.
\begin{definition}
  \label{minc}
  A codeword $c\in\cC$ is \emph{minimal} if
  \[ \forall c'\in\cC:\mathrm{supp}(c')\subseteq\mathrm{supp}(c)
    \Rightarrow\exists\lambda\in\FF_q: c'=\lambda c. \]
  A code is \emph{minimal} if all its codewords are minimal.
\end{definition}
If a code is minimal, then all its codewords are determined (up to
a multiplicative factor) by their support. As the following remark shows,
the converse is not true in general.
\begin{remark}
  Let $\Omega$ be the projective system given by the points of an elliptic
  quadric $Q^-(3,q)$ in $\PG(3,q)$. Then, a hyperplane of $\PG(3,q)$
  either is the unique tangent plane to $Q^-(3,q)$ at a point $P$ or
  it is spanned by its intersection with $Q^-(3,q)$. So the support of
  a nonzero codeword either has cardinality $(q^2+1)-1$ or $(q^2+1)-(q+1)$,
  and the support of the codewords corresponding to non-tangent hyperplanes
  are contained in all the supports of the codewords corresponding to
  hyperplanes tangent at their points.
\end{remark}
The codes where all words are determined (up to scalar multiples) by their
support are called \emph{quasi-minimal}; see~\cite{Cohen2013}.

The minimality of a code is closely
related to some deep structural properties of the code,
 as well as to the construction of efficient
 decoding algorithms, see~\cite{Ashikhmin1998}; thus it is a relevant
 property to investigate.

Proving the minimality of a linear code is reputed to be
a hard problem, as it would require at least to have
some insight on the weights of the code, itself a NP-complete
problem.
For this reason, in recent years, structural conditions guaranteeing
the minimality of a code or constructions yielding
minimal codes have been extensively looked after.

%As mentioned above, the minimality of a code appears to be
%related to its full distribution of weights; however, in general
%it can be proven by considering just a subset of them.
As mentioned in the Introduction (see~\eqref{ab-bound}), Ashikhmin and Barg in \cite{Ashikhmin1998} determined
a sufficient condition for a code $\cC$ to be minimal; this
condition depends only on the minimum and maximum weight of $\cC$.
We remark that while the condition~\eqref{ab-bound} is sufficient
for minimality, there exists many families of codes which are
minimal but do not satisfy it (see~\cite{Pasalic2021} and~\cite{Chen2025}) or for which we do not have sufficient insight on
the maximum weight (e.g. the Grassmann codes for $k>3$) as to
be able to check if it is satisfied.
 We will provide some examples of such families in Remark~\ref{esempio sec4} of Section~\ref{emb-codes}.

An important tool for determining the minimality of a code
is the following characterization by~\cite{ABN-2022}
(see also~\cite{Tang2021}), relating
minimal codes and cutting sets with respect
to hyperplanes. We shall make use of this property extensively.

\begin{prop}\label{equiv}
  A projective code $\cC:=\cC(\Omega)$ is minimal if and only if
  $\Omega$ is a cutting set with respect to hyperplanes of $\PG(\langle\Omega\rangle)$.
\end{prop}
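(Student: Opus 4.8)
The plan is to translate the combinatorial minimality condition of Definition~\ref{minc} into the geometric language of hyperplane sections of $\Omega$, and then match it against Definition~\ref{cutting bs}. Write $\Omega=\{[x_1],\dots,[x_N]\}\subseteq\PG(V)$ with $\langle\Omega\rangle=\PG(V)$, and let $G$ be the generator matrix whose $i$-th column is $x_i$. Every nonzero codeword has the form $c=mG$ for a unique $[m]\in\PG(V^*)$, and since $c_i=m\cdot x_i$, the zero set $Z(c):=\{i:c_i=0\}$ of $c$ is exactly the set of indices of the points of $\Omega$ lying on the hyperplane $H_m:=[\ker(m)]$; that is, $Z(c)$ records $\Omega\cap H_m$. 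Scalar multiples of $c$ share the same hyperplane, so passing to projective classes yields a bijection between the nonzero codewords of $\cC(\Omega)$ (up to scalars) and the hyperplanes of $\PG(\langle\Omega\rangle)$; here $\langle\Omega\rangle=\PG(V)$ guarantees $mG\neq 0$ for $m\neq 0$, so this correspondence is onto all hyperplanes.

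First I would reformulate the inclusion of supports. For two nonzero codewords $c=mG$ and $c'=m'G$, with associated hyperplanes $H=H_m$ and $H'=H_{m'}$, one has $\mathrm{supp}(c')\subseteq\mathrm{supp}(c)$ if and only if $Z(c)\subseteq Z(c')$, i.e.\ if and only if $\Omega\cap H\subseteq H'$. The zero codeword has empty support and is always a scalar multiple of $c$, so it never obstructs minimality. Consequently a fixed nonzero codeword $c$ with hyperplane $H$ is minimal precisely when the only hyperplane $H'$ of $\PG(\langle\Omega\rangle)$ satisfying $\Omega\cap H\subseteq H'$ is $H$ itself.

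The heart of the argument is to show that this last condition is equivalent to $\langle\Omega\cap H\rangle=H$. If $\Omega\cap H$ spans $H$, then any hyperplane $H'\supseteq\Omega\cap H$ contains $\langle\Omega\cap H\rangle=H$, and since $H,H'$ have equal dimension this forces $H'=H$; hence $c$ is minimal. Conversely, if $W:=\langle\Omega\cap H\rangle$ is a proper subspace of $H$, then $W$ has codimension at least two in $\PG(\langle\Omega\rangle)$, so the hyperplanes of $\PG(\langle\Omega\rangle)$ containing $W$ form a dual projective subspace of dimension at least one; in particular there is a hyperplane $H'\neq H$ through $W$. This $H'$ satisfies $\Omega\cap H\subseteq W\subseteq H'$, producing a codeword $c'=m'G$ with $\mathrm{supp}(c')\subseteq\mathrm{supp}(c)$ but $[m']\neq[m]$, so $c$ is not minimal.

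Finally I would globalize: $\cC(\Omega)$ is minimal exactly when every nonzero codeword is minimal, which by the previous step means $\langle\Omega\cap H\rangle=H$ for every hyperplane $H$ of $\PG(\langle\Omega\rangle)$ --- precisely the statement that $\Omega$ is a cutting set with respect to hyperplanes in the sense of Definition~\ref{cutting bs}. The only point requiring genuine care is the dual bookkeeping together with the dimension count in the converse direction: one must verify that a subspace of codimension at least two really is contained in more than one hyperplane (which fails in the codimension-one case), so that a proper section $\Omega\cap H$ always exhibits a second hyperplane and hence a witness to non-minimality. The rest is a faithful dictionary between the linear-algebraic and the geometric descriptions.
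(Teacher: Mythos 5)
Your proof is correct and complete. Note that the paper does not actually prove Proposition~\ref{equiv}: it quotes it as a known characterization from~\cite{ABN-2022} (see also~\cite{Tang2021}). Your argument is precisely the standard one underlying those references: the dictionary $c=mG \leftrightarrow H_m=[\ker(m)]$, the observation that $\mathrm{supp}(c')\subseteq\mathrm{supp}(c)$ is equivalent to $\Omega\cap H\subseteq H'$, and the dual dimension count showing that $\langle\Omega\cap H\rangle\subsetneq H$ forces a second hyperplane through $\Omega\cap H$. You handle the two points that are easy to fumble: injectivity of $m\mapsto mG$ up to scalars (which needs $\langle\Omega\rangle=\PG(V)$, guaranteed here by restricting to $\PG(\langle\Omega\rangle)$), and the fact that a subspace of codimension at least two lies on a dual subspace of hyperplanes of dimension at least one, hence on at least two hyperplanes, whereas the codimension-one case admits no such witness. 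So there is no gap; your write-up in fact supplies the self-contained proof the paper delegates to the literature.
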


\section{Non-degenerate codes}
\label{nd-proj}
Recall that a code $\cC$ is non-degenerate if its generator matrix does
not contain any zero column; this is equivalent to say that
the minimum distance of
the dual of $\cC$ is at least $2$.
\begin{definition}
  Let $\cC$ be a non-degenerate code with generator matrix
  $G$.
  A \emph{projectivized code} $\cC^P$ corresponding to $\cC$ is
  a projective code
  having as generator matrix the submatrix $G^P$ of $G$
  obtained by removing from $G$ all its proportional columns
  but one.
\end{definition}
Projectivized codes are uniquely defined up to equivalence.
If $\cC$ is projective, then $\cC^P$ is equivalent to
$\cC$; otherwise, $\cC^P$ is a shorter code.  Let $n$ and $n^P$ be the length of $\cC$ and
  $\cC^P$ respectively.
When $n^P<n$, we can always suppose that, up to code equivalence,
the generator matrix $G$ of $\cC$ can be written as
$G=( G^P| S )$ where $G^P$ is a generator matrix for
the projectivized code $\cC^P$.
More explicitly,
suppose $G^P=(G_1,\dots,G_{n^P})$ and $S=(S_1,\dots,S_{n-n^P})$,
where the $G_i,\, 1\leq i\leq n^P$ and $S_j, 1\leq j\leq n-n^P$ are respectively the columns of $G^P$ and $S$.

The codes $\cC$ and $\cC^P$ have the same dimension; so they also
have the same number of words.
Any codeword $c=(c^P, \overline{c})\in \cC$ is associated to a unique codeword $c^P\in \cC^P$, obtained by puncturing the last $n-n^P$ coordinates from $c$.    Conversely, for any codeword $c^P\in \cC^P$ there exists a unique codeword $c\in \cC$  associated to $c^P$, having as first $n_P$ components
the components of $c$.
More in detail, let $m$ be the message such that $c^P=mG^P.$
Since $G=( G^P| S )$ we can associate to $c^P\in \cC^P$ the codeword $c=mG=m ( G^P| S )=( mG^P| nS )=(c^P,mS)\in \cC$ obtained by concatenating $c^P$ with $mS.$ This
realizes a bijective correspondence between the codewords of
$\cC$ and those of its projectivized code $\cC^P.$
Clearly, $\mathrm{supp}(c^P)\subseteq\mathrm{supp}(c).$
%In other words, if $G$ is a generating matrix
%for $\cC$, then a generating matrix for $\cC^P$ is obtained
%by taking point representatives for the points whose coordinates
%are given by the columns of $G$.

The following theorem relates the minimality of $\cC^P$ and $\cC$.
%is now straightforward.
\begin{theorem}
  \label{ndc}
  Let $\cC$ be a non-degenerate code and $\cC^P$ its projectivization.
  The linear code $\cC$ is minimal if and only if
 $\cC^P$ is minimal.
\end{theorem}
\begin{proof}
  Suppose $\cC^P$ to be minimal and take two codewords $c=(c^P|s),
  c'=({c'}^P|s')$ of $\cC$ which are not proportional.
  Since $c^P$ is not proportional to ${c'}^P$,
  there exist $1\leq i,j\leq n_P$ such that
  $i\in\mathrm{supp}({c'}^P)\setminus\mathrm{supp}(c^P)$
  and $j\in\mathrm{supp}(c^P)\setminus\mathrm{supp}({c'}^P)$.
  Since $\mathrm{supp}(c)=\mathrm{supp}(c^P)\cup
  \{ n_P+r\colon r\in\mathrm{supp}(s) \}$
  and
   $\mathrm{supp}(c)=\mathrm{supp}({c'}^P)\cup
   \{ n_P+r\colon r\in\mathrm{supp}(s') \}$,
   we have $i\in\mathrm{supp}({c'})\setminus\mathrm{supp}(c)$
   and $j\in\mathrm{supp}({c})\setminus\mathrm{supp}(c')$.
   It follows that $\cC$ must be a minimal
   code.

   Conversely, suppose $\cC$ to be a minimal code.
   Assume by contradiction that
   $\cC^P$ is not minimal. Then there exist two codewords
   $c=(c^P|s) ,c'=({c'}^P|s')\in\cC$ such that, say
   $\mathrm{supp}(c^P)\subseteq\mathrm{supp}({c'}^P)$
   and $\mathrm{c}^P$ is not a scalar multiple of ${c'}^P$.

   Since $\cC^P$ is projective, no two columns of $G^P$ are proportional;
also, since $\cC$ is non-degenerate,
for any column $S_j$ of $S$ there exists,  by construction, a unique
column $G_i$ of $G^P$ such that $S_j$ is a non-zero multiple of $G_i$.
Denote by $\psi:\{1,\dots,n-n_P\}\to\{1,\dots,n_P\}$ the function
mapping the index $j$ of a column in $S$ to the index $i$ of
the corresponding column in $G$.

Observe that
$\mathrm{supp}(c)=\mathrm{supp}(c^P)\cup\{ n_P+j :
c^P_{\psi(j)}\neq 0 \}$ and
$\mathrm{supp}(c')=\mathrm{supp}({c'}^P)\cup\{ n_P+j :
{c'}^P_{\psi(j)}\neq 0 \}$.
Since $\mathrm{supp}(c^P)\subseteq\mathrm{supp}({c'}^P)$,
we have that $c^P_{\psi(j)}\neq0$ implies ${c'}^P_{\psi(j)}\neq0$,
so $\{ n_P+j :c^P_{\psi(j)}\neq 0 \}\subseteq\{ n_P+j :
{c'}^P_{\psi(j)}\neq 0 \}$.
Thus, $\mathrm{supp}(c)\subseteq
\mathrm{supp}(c')$ and $c$ and $c'$ are linearly independent.
This contradicts the minimality of $\cC$.
The proof follows.
\end{proof}

\begin{remark}
  If $\cC$ is degenerate, then it is equivalent to a code
  whose generator matrix is of the form $(G|\mathbf{0})$
  where $\mathbf{0}$ is a null matrix and $G$ is the generator
  matrix of a non-degenerate code. In particular, since
  no word has support on the positions of $\mathbf{0}$,
  we have that $\cC$ is minimal if and only if the
  projectivization $\cC^P$ of the code with generator matrix
  $G$ is minimal.
  \end{remark}

  \begin{remark}
    \label{abnon}
    Condition~\eqref{ab-bound} is a widely used tool in order to
    test for the minimality of a code and most of the known minimal
    codes satisfy it.
    On the other hand, it is easy to construct codes which are minimal
    but fail to fulfill it.

    To this aim, let $\cC$ be a $[n,k]$-minimal code with
    minimum and maximum weight respectively $w_{\min}$ and
    $w_{\max}$ and $k>1$.
    Let
    $G$ be a generator matrix of $\cC$ such that  
    the first row of $G$ is a word $c_{\min}$ of minimum weight
    and the last row
    of $G$ is a word $c_{\max}$ of maximum weight $w_{\max}.$

    We can now take the extended code
  $\cC^{(t)}$ with generator matrix $G^{(t)}:=(G|U^{(t)})$ where $U^{(t)}$ is a
    $k\times t$ matrix which is zero everywhere but in the last row,
    which consists of the all $1$-vector.

    Observe that all linear combinations of the rows of $G^{(t)}$ where the coefficient
    of $c_{\max}$ is $0$ yield codewords of $\cC^{(t)}$
   which have the same weight as codewords of $\cC$.
    When the coefficient of $c_{\max}$ is non-zero, then we get a codeword of $\cC^{(t)}$
    which has weight $w+t$ where $w$ is the weight of a codeword of $\cC$.
    So,  the minimum weight $w_{\min}$ of $\cC$ and  the
    minimum weight $w_{\min}^{(t)}$ of $\cC^{(t)}$ are the
    same while the maximum weight of $\cC^{(t)}$ is $w_{\max}^{(t)}=w_{\max}+t$.
    If we choose the parameter $t$ so that
    \[ t>\left(\frac{q}{q-1}w_{\min}\right)w_{\min}-w_{\max}, \]
    the code $\cC^{(t)}$ does not satisfy anymore the bound~\eqref{ab-bound}.

    Using the same argument as above we see that any two non-proportional
    codewords of $\cC^{(t)}$ must have supports which are not contained in each other;
    so $\cC^{(t)}$ is again minimal.

    In general, if $t>1$ the code $\cC^{(t)}$ is not going to be projective, even if $\cC$ is so.
  \end{remark}
  
  Any extension of a minimal code is minimal; however, puncturing a minimal
  code might yield a code which is not minimal anymore. A minimal code
  whose puncturings are not minimal for any choice of coordinates is
  called \emph{reduced}~\cite{ABN-2022}. As pointed out in~\cite{ABN-2022},
  a reduced minimal code is always projective (the converse
  is obviously false).

\section{Minimal codes and maximal hyperplanes}\label{ssmin}
In this section we shall prove our main results. The key statement is the following proposition.
\begin{prop}
  \label{ptv}
  Let $\Gamma$ be a point-line geometry and $\vep:\Gamma\to\PG(V)$
  be a projective embedding of $\Gamma$. Denote by $\Omega$ the
  image of $\vep$.
  If all the geometric hyperplanes of $\Gamma$ arising from $\vep$ are maximal (as
  subspaces of $\Gamma$) then $\cC(\Omega)$ is a minimal code.
\end{prop}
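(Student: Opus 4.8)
The plan is to prove Proposition~\ref{ptv} by combining the characterization of minimal codes via cutting sets (Proposition~\ref{equiv}) with the graph-theoretic characterization of maximal hyperplanes (Proposition~\ref{cmhp}). By Proposition~\ref{equiv}, showing that $\cC(\Omega)$ is minimal is equivalent to showing that $\Omega=\vep(\cP)$ is a cutting set with respect to hyperplanes of $\PG(\langle\Omega\rangle)=\PG(V)$; that is, for every projective hyperplane $H$ of $\PG(V)$ we must verify that $H\cap\Omega$ spans $H$, i.e. $\langle H\cap\Omega\rangle=H$.

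So first I would fix an arbitrary projective hyperplane $H$ of $\PG(V)$ and set $\cH:=\vep^{-1}(H)$. As recalled in Section~\ref{Sec Point line geometries}, $\cH$ is a geometric hyperplane of $\Gamma$ arising from $\vep$, and $\vep(\cH)=H\cap\vep(\cP)=H\cap\Omega$. By hypothesis every geometric hyperplane of $\Gamma$ arising from $\vep$ is maximal, so $\cH$ is a maximal subspace of $\Gamma$. Applying Proposition~\ref{cmhp} to $\cH$, the collinearity graph induced by $G_\Gamma$ on $\cP\setminus\cH$ is connected.

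Next I would translate this connectivity into the required spanning statement. Let $W:=\langle H\cap\Omega\rangle=\langle\vep(\cH)\rangle\subseteq H$; I want to prove $W=H$, equivalently that $\vep(\cH)$ spans $H$. The key observation is that connectivity of the induced graph on $\cP\setminus\cH$ lets me ``move along lines'' entirely outside $\cH$. Concretely, take any point $p\in\cP\setminus\cH$ and any line $\ell$ of $\Gamma$ through $p$ meeting $\cH$. Since $\cH$ is a geometric hyperplane, $\ell$ is not contained in $\cH$ and so meets $\cH$ in exactly one point $h_\ell$; under $\vep$ the projective line $\vep(\ell)$ meets $H$ in the single point $\vep(h_\ell)$, while $\vep(p)\notin H$. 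The plan is to argue, using the connectedness of $\cP\setminus\cH$, that the span of the points $\vep(h)$ with $h\in\cH$ together with a single point $\vep(p_0)$ outside $H$ already recovers all of $\PG(V)$; then, since $\langle\Omega\rangle=\PG(V)$ and the hyperplane $H$ has codimension one, a dimension count forces $\langle\vep(\cH)\rangle=H$. The mechanism is that collinear points $p,q$ outside $\cH$ that are each collinear with points of $\cH$ give, via the lines $\vep(\overline{p h})$ and $\vep(\overline{q h'})$, enough incidences among the $\vep(h)$ to build up $H$; connectivity guarantees we can propagate this across all of $\cP\setminus\cH$ rather than being stuck in a proper subspace.

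The main obstacle, and the step I would treat most carefully, is this last propagation argument: connectivity of $\cP\setminus\cH$ is a statement about \emph{points} of $\Gamma$ and their collinearity, whereas spanning is a statement about the \emph{subspace} $W=\langle\vep(\cH)\rangle$ of $\PG(V)$. The bridge is to show that if $\vep(\cH)$ failed to span $H$, then $W$ would be a projective subspace of $H$ of codimension at least one in $H$, hence contained in \emph{another} hyperplane $H'\neq H$ of $\PG(V)$, giving a second geometric hyperplane $\cH'=\vep^{-1}(H')\supseteq\cH$. This would contradict the maximality of $\cH$ as a subspace (a maximal geometric hyperplane cannot be properly contained in another proper subspace, in particular not in another geometric hyperplane). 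Thus I expect the cleanest route is: assume $\langle\vep(\cH)\rangle\subsetneq H$, produce a hyperplane $H'$ with $\langle\vep(\cH)\rangle\subseteq H'\cap\vep(\cP)$ and $\cH\subseteq\cH':=\vep^{-1}(H')$, and then check that $\cH\neq\cH'$ and $\cH'\neq\cP$, contradicting maximality of $\cH$ supplied by Proposition~\ref{cmhp}. The delicate point is ensuring the inclusion $\cH\subseteq\cH'$ is \emph{strict} and that $\cH'$ is a \emph{proper} subspace, so that maximality is genuinely violated.
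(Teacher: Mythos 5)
Your final sketch has the same skeleton as the paper's proof (a contradiction obtained by nesting two distinct geometric hyperplanes arising from $\vep$), but the step you yourself flag as delicate is a genuine gap, and it does not close by itself. If you merely take \emph{some} hyperplane $H'\neq H$ containing $W=\langle\vep(\cH)\rangle$, the required conclusion $\cH\neq\cH'$ can fail: every point of $\Omega\cap H$ already lies in $W$, so any hyperplane through $W$ that meets no point of $\Omega\setminus H$ satisfies $H'\cap\Omega=H\cap\Omega$, hence $\cH'=\vep^{-1}(H')=\cH$, and no violation of maximality occurs. The missing idea --- which is exactly the content of the paper's proof --- is to build $H'$ so that strictness is forced: pick a point $[u]\in\Omega\setminus H$ (it exists because $\Omega$ spans $\PG(V)$ and $H$ is proper); since $W$ is \emph{properly} contained in the hyperplane $H$, it has projective codimension at least $2$ in $\PG(V)$, so $\langle W,[u]\rangle$ is still a proper subspace and is therefore contained in some hyperplane $H'$ of $\PG(V)$. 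Then $\cH':=\vep^{-1}(H')$ is a geometric hyperplane arising from $\vep$ (in particular a proper subspace of $\Gamma$), it contains $\cH$ because $\vep(\cH)=H\cap\Omega\subseteq W\subseteq H'$, and $\vep^{-1}([u])\in\cH'\setminus\cH$, so $\cH\subsetneq\cH'$; this contradicts the maximality of $\cH$. With this choice made explicit, your argument is complete and coincides with the paper's.

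A secondary remark: the detour through Proposition~\ref{cmhp} is unnecessary and slightly miswired. Maximality of the hyperplanes arising from $\vep$ is the \emph{hypothesis} of Proposition~\ref{ptv}, not something ``supplied by Proposition~\ref{cmhp}'', and the connectivity of the graph induced on $\cP\setminus\cH$ is never used in the paper's proof of this proposition; it enters only afterwards, when Theorem~\ref{main thm} is deduced by combining Proposition~\ref{cmhp} with Proposition~\ref{ptv}. Correspondingly, the ``propagation along lines'' plan in the first half of your proposal, besides being left vague, is not needed at all and can simply be deleted.
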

\begin{proof}
  By contradiction,
  suppose that $\cC(\Omega)$ is not a minimal code. By Proposition~\ref{equiv}, there is at least one
  hyperplane $[\varphi]$  of $\PG(\langle\Omega\rangle)$ such that
  $T:=\langle\Omega\cap [\varphi]\rangle\subset [\varphi]$ with $T\neq[\varphi]$.
  Then ${\mathcal T}:=\vep^{-1}(T)$ is a geometric hyperplane of $\Gamma$.
  Indeed, let $\ell $ be  a line of $\Gamma$, hence $\vep(\ell)$ is a projective line of $\PG(V)$ and either  $\vep(\ell)\cap [\varphi]$ is the  line $\vep(\ell)\subseteq \Omega\cap [\varphi]\subseteq T$ or $\vep(\ell)\cap [\varphi]$ is a (projective) point $p\in \Omega\cap [\varphi]\subseteq T$ because $[\varphi]$ is a hyperplane of $\PG(V).$
  Hence, $\ell\cap \vep^{-1}(T)$ either coincides with the line $\ell$ or it is the point $\vep^{-1}(p)$ of $\Gamma.$

  Take now $[u]\in\Omega\setminus T$. Since $T$ is not a hyperplane of
  $\PG(\langle\Omega\rangle)$ there exists at least an hyperplane $Y$
  of $\PG(\langle\Omega\rangle)$ containing both $T$ and $u$.
  Then ${\mathcal Y}:=\vep^{-1}(Y)$ is a geometric hyperplane of
  $\Gamma$ and ${\mathcal T}\subset{\mathcal Y}$ with ${\mathcal T}\neq{\mathcal Y}$ since $\vep^{-1}(u)\in{\mathcal Y}\setminus{\mathcal T}$.
  This contradicts the hypothesis
  that all hyperplanes of $\Gamma$ arising from $\vep$ are maximal subspaces.
\end{proof}
Theorem~\ref{main thm} follows from Proposition~\ref{cmhp} and Proposition~\ref{ptv}.
Corollary~\ref{main cor} follows from Proposition~\ref{ptv}.

\begin{remark}
  Since the only hyperplanes relevant for determining the minimality
  of a code $\cC(\Omega)$ with $\Omega=\vep(\cP)$ are those arising from the embedding
  $\vep$,
  the hypothesis of Proposition~\ref{ptv} can be slightly weakened, by
  just requiring that the hyperplanes of $\Gamma$ which arise
  from $\vep$ are maximal in the family of all the geometric
  hyperplanes arising from $\vep$. However, this latter
  condition is the same to say that for any hyperplane $H$
  of $\PG(V)$ we have $\dim(\langle H\cap\Omega\rangle)=\dim(V)-1$
  which is just a restatement of the fact that $\Omega$ is
  a cutting set for the hyperplanes.
\end{remark}

% NON SO SE CONVIENE TENERE QUELLO CHE VIENE DOPO... ALLA FINE NON E' CHE SIA TANTO UTILE\\

%We point out that while Proposition~\ref{ptv} is formulated in
%terms of abstract incidence geometries, still it might be
%possible to apply it to any projective system (even if in
%this case to study the generic geometric hyperplanes might
%be difficult).
%Indeed, if $\Omega$ is an arbitrary projective system spanning $\PG(V)$,
%we can always define a point-line geometry $\cG(\Omega)$
%whose points are the points of $\Omega$ and whose lines
%are the projective lines of $\PG(V)$ contained in $\Omega$.
%Thus, the collinearity graph $\Gamma_{\Omega}$
%has as vertices the points of $\Omega$ and two vertices
%are adjacent if and only if there is a line $\ell$ of
%$\PG(V)$ contained in $\Omega$ joining those two vertices.
%If the graph $\Gamma_{\Omega}$ is not connected, then it is not
%possible to apply any form of Proposition~\ref{ptv}.
%Otherwise, to show that for any hyperplane $H$ of $\PG(V)$
%the graph induced $\Omega\setminus H$ on $G_{\Omega}$ is connected
%implies the minimality of the code.

%\begin{corollary}
%  Let $\Gamma:=(\cP,\cL)$ be a point-line geometry, $\vep:\Gamma\to\PG(V)$ be
%  a projective embedding of $\Gamma$ and $\Omega$ the image of $\vep$.
%  The projective code $\cC(\Omega)$ is minimal if and only if
%  for any hyperplane $H$ of $\PG(V)$
%  the graph induced by $\cP\setminus\vep^{-1}(H)$ on
%  the collinearity graph $G_{\Gamma}$ is connected.
%\end{corollary}

\section{Some remarkable families of codes}~\label{emb-codes}
In this section we shall discuss some remarkable families of codes
arising from projective embeddings of point-line geometries.
Since the embeddings being considered are \emph{homogeneous} (see e.g.~\cite{Cardinali2018a}), all these
codes have a large automorphism group which is actually transitive on
the components of the codewords.

\subsection{Grassmann codes}\label{Grassmann codes}
Grassmann codes have been introduced in \cite{R1,R2,R3} as generalizations
of first order Reed--Muller codes and
they have been extensively investigated, both for their geometry
and for their properties.
We briefly recall their construction.

%They can be regarded
%These are projective codes arising from the
%Pl\"ucker embedding of a $k$--Grassmannian. Their parameters, as well as some of their higher weights have been fully
%determined in \cite{No}.
Suppose $V$ is a $n$-dimensional vector space over a finite field $\FF_q$ and for $k=1,\dots, n-1$, denote by $\cG_{n,k}=(\cP,\cL)$ the $k$-Grassmann geometry of the projective space $\PG(V)$, that is the point-line
geometry whose pointset is the set of the $k$-dimensional subspaces of $V:$
\[\cP=\{X\colon \dim(X)=k\}\] and whose
line set is
\[\cL=\{\ell_{W,T}\colon \dim(W)=k-1 \text{ and } \dim(T)=k+1\}\]
 where $\ell_{W,T}:=\{ X: W\leq X\leq T, \dim(X)=k \}.$

 The geometry $\cG_{n,k}$ affords the so-called
 Pl\"ucker of
 Grassmann embedding $\vep_{k}^{gr}$ which maps any arbitrary point of $\cG_{n,k}$, that is a
$k$--dimensional subspace $\langle v_1,v_2,\dots,v_k\rangle$ of
$V$,  to the point $[v_1\wedge v_2\wedge\dots\wedge v_k]$ of $\PG(\bigwedge^kV)$, where $\bigwedge^k V$  is  the $k$-th  exterior power of $V$:
\begin{equation}\label{Grassmann embedding}
\begin{array}{ll}
\vep_{k}^{gr}:&\cG_{n,k}\to\PG(\bigwedge^kV) \\
 &\vep_{k}^{gr}(\langle v_1,v_2,\dots,v_k\rangle)= [v_1\wedge v_2\wedge\dots\wedge v_k]\end{array}
\end{equation}

The  Grassmann embedding  is a projective embedding, as defined in Section~\ref{Sec point-line geometries}, and the set $\vep_{k}^{gr}(\cP)$ can be regarded as a projective system of $\PG(\bigwedge^kV).$ Thus we can consider the linear code $\cC_{n,k}:=\cC(\vep_{k}^{gr}(\cP))$ arising from $\vep_{k}^{gr}(\cP)$, called \emph{Grassmann code}.

It is known from the literature~\cite[Lemma 2.1]{Bart} that all geometric hyperplanes of the geometry $\cG_{n,k}$ arise from the  Grassmann  embedding and that every hyperplane of $\cG_{n,k}$ is maximal.
So, the code $\cC_{n,k}$ is minimal.

%By Proposition~\ref{ptv}, if every geometric hyperplane of $\cG_{n,k}$ arising from the Grassmann embedding is maximal then the Grassmann code $\cC_{n,k}$ is minimal.

So, combining the result of~\cite{Nogin1996} for the parameters and Proposition~\ref{ptv} and ~\cite{Bart} for the minimality, we have
\begin{theorem}\label{min1}
A Grassmann code is a $[N,K,d]_q$- minimal code with
\[ N=\genfrac[]{0pt}0{n}{k}_q=
  \frac{(q^n-1)(q^n-q)\dots(q^n-q^{k-1})}{(q^k-1)(q^k-q)\dots(q^k-q^{k-1})},
\]
\[ K={n\choose k},\qquad d=q^{(n-k)k}. \]
\end{theorem}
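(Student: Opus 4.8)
The plan is to pin down the three parameters $N$, $K$, $d$ individually and then read off minimality from the structural results already established. Since the length of $\cC_{n,k}$ is $|\cP|$, I would first count the points of $\cG_{n,k}$: these are the $k$-dimensional subspaces of the $n$-dimensional space $V$, and their number is the Gaussian binomial coefficient $\genfrac[]{0pt}0{n}{k}_q$, which expands to the displayed product, giving $N$. For the dimension, recall that a code of the form $\cC(\Omega)$ has dimension $\dim\langle\Omega\rangle$, which for $\Omega=\vep_{k}^{gr}(\cP)$ is the vector dimension of $\bigwedge^k V$; since $\vep_{k}^{gr}(\cP)$ spans $\PG(\bigwedge^k V)$ by definition of a projective embedding, this dimension is $K=\binom{n}{k}$.

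The genuine content is the minimum distance. By the relation $d=N-\max_{H}|\Omega\cap H|$ (see~\eqref{min distance 1}), with $H$ ranging over the hyperplanes of $\PG(\bigwedge^k V)$, I would identify each hyperplane with $[\varphi]$ for a nonzero functional $\varphi\in(\bigwedge^k V)^*$, so that $[v_1\wedge\cdots\wedge v_k]\in\Omega$ lies on $H$ exactly when $\varphi(v_1\wedge\cdots\wedge v_k)=0$. The weight of the associated codeword is then the number of $k$-subspaces $X$ with $\varphi(X)\neq 0$. To exhibit a small-weight word I would take a \emph{decomposable} functional $\varphi=\xi_1\wedge\cdots\wedge\xi_k$ with the $\xi_i\in V^*$ linearly independent, set $W=\bigcap_i\ker\xi_i$ (so $\dim W=n-k$), and use $\varphi(v_1\wedge\cdots\wedge v_k)=\det(\xi_i(v_j))$: this determinant is nonzero precisely when $X=\langle v_1,\dots,v_k\rangle$ is a complement of $W$ in $V$. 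As the number of complements of a fixed $(n-k)$-subspace is $q^{(n-k)k}$, such a codeword has weight $q^{(n-k)k}$, yielding the bound $d\le q^{(n-k)k}$.

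The reverse inequality --- that every codeword has weight at least $q^{(n-k)k}$, equivalently that no hyperplane section of $\Omega$ exceeds $N-q^{(n-k)k}$ --- is where I expect the main obstacle. For a general, not necessarily decomposable, $\varphi$ one must bound from below the number of $k$-subspaces lying off the kernel of $\varphi$, and controlling this uniformly is precisely the delicate combinatorial step. Rather than reprove it, I would invoke Nogin's determination of the minimum distance (indeed the full weight distribution) of the Grassmann code in~\cite{Nogin1996}, from which $d=q^{(n-k)k}$ follows.

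Minimality then requires no new weight information. By~\cite[Lemma 2.1]{Bart}, every geometric hyperplane of $\cG_{n,k}$ arises from $\vep_{k}^{gr}$ and is maximal as a subspace, so the hypothesis of Proposition~\ref{ptv} is met and $\cC_{n,k}$ is minimal; equivalently, by Proposition~\ref{cmhp} the collinearity graph of $\cG_{n,k}$ remains connected off every such hyperplane, so Theorem~\ref{main thm} applies directly.
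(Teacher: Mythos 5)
Your proposal is correct and follows essentially the same route as the paper: minimality comes from De Bruyn's result \cite[Lemma 2.1]{Bart} (all geometric hyperplanes of $\cG_{n,k}$ arise from the Grassmann embedding and are maximal) combined with Proposition~\ref{ptv}, and the minimum distance is ultimately delegated to Nogin~\cite{Nogin1996}, exactly as the paper does. Your only additions are the elementary verifications of $N$, $K$ and of the upper bound $d\le q^{(n-k)k}$ via decomposable functionals, which the paper subsumes in the citation of \cite{Nogin1996}.
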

Minimum weight codewords have  been characterized by Nogin~\cite{Nogin1996} as those corresponding to completely
decomposable elements of $\bigwedge^{n-k}V\cong(\bigwedge^k V)^*$;
see also~\cite{Ghorpade2009}.
\begin{theorem}[\cite{Ghorpade2009}]
  A linear functional $f\in(\bigwedge^kV)^*\cong
  (\bigwedge^{n-k}V)$ determines a minimum weight codeword of
  $\cC_{n,k}$ if and only if there exist $n-k$ linearly independent
  vectors $v_1,\dots,v_{n-k}\in V$ such that
  $\omega_0:=(v_1\wedge\dots\wedge v_{n-k})$ and
  $f(x):=\omega_0\wedge x$.
  In particular, the number of minimum weight codewords of
  $\cC_{n,k}$ is \[(q-1)\genfrac[]{0pt}0{n}{n-k}_q=
  (q-1)\genfrac[]{0pt}0{n}{k}_q. \]
\end{theorem}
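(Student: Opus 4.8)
The plan is to pass to the geometric description of $\cC_{n,k}$ afforded by Proposition~\ref{equiv} and the Grassmann embedding. Fix a nonzero $\Theta\in\bigwedge^nV$; the wedge product induces a nondegenerate pairing $\bigwedge^{n-k}V\times\bigwedge^kV\to\bigwedge^nV\cong\FF_q$, $(\omega,\alpha)\mapsto\omega\wedge\alpha$, which realises exactly the isomorphism $(\bigwedge^kV)^*\cong\bigwedge^{n-k}V$ of the statement: every functional $f$ is $f(\alpha)=\omega\wedge\alpha$ for a unique $\omega\in\bigwedge^{n-k}V$. Since the columns of a generator matrix of $\cC_{n,k}$ are the Pl\"ucker vectors $\vep_k^{gr}(X)=[x_1\wedge\cdots\wedge x_k]$, $X=\langle x_1,\dots,x_k\rangle\in\cP$, the codeword $c_\omega$ determined by $f$ vanishes in position $X$ precisely when $\omega\wedge(x_1\wedge\cdots\wedge x_k)=0$. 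Hence
\[ \wt(c_\omega)=\big|\{X\in\cP:\ \omega\wedge\vep_k^{gr}(X)\neq 0\}\big|, \]
and the theorem becomes the assertion that this number equals $d=q^{(n-k)k}$ if and only if $\omega$ is completely decomposable. As $\dim\bigwedge^{n-k}V={n\choose k}=K$, the map $\omega\mapsto c_\omega$ is a linear bijection onto $\cC_{n,k}$, so distinct $\omega$ give distinct codewords; I will need this for the final count.

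First I would dispatch the easy implication. If $\omega=\omega_0=v_1\wedge\cdots\wedge v_{n-k}$ with the $v_i$ independent, put $W=\langle v_1,\dots,v_{n-k}\rangle$. Then $\omega_0\wedge(x_1\wedge\cdots\wedge x_k)\neq 0$ exactly when $v_1,\dots,v_{n-k},x_1,\dots,x_k$ form a basis of $V$, i.e.\ when $X\cap W=0$; the $k$-subspaces complementary to the fixed $(n-k)$-subspace $W$ number exactly $q^{(n-k)k}$. Therefore $\wt(c_{\omega_0})=q^{(n-k)k}=d$, and $c_{\omega_0}$ has minimum weight.

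The converse is the substance, and the key device is the linear part of $\omega$. Set $W_\omega:=\{v\in V:\ v\wedge\omega=0\}$, of dimension $r$, and recall the elementary exterior-algebra fact that $r\le n-k$, with equality iff $\omega$ is decomposable. Splitting $V=W_\omega\oplus V'$ one writes $\omega=\hat W_\omega\wedge\omega'$, where $\hat W_\omega$ generates $\bigwedge^{r}W_\omega$ and $\omega'\in\bigwedge^{(n-k)-r}V'$ has no linear factor in $V'$. A direct check shows that $\omega\wedge(x_1\wedge\cdots\wedge x_k)\neq0$ forces $X\cap W_\omega=0$, and that for such $X$ the condition reads $\omega'\wedge\widehat{\bar X}\neq 0$ in $\bigwedge^{\dim V'}V'$, where $\bar X$ is the image of $X$ in $V'\cong V/W_\omega$; it depends only on $\bar X$. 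Since each such $\bar X$ is the image of exactly $q^{kr}$ subspaces $X$ with $X\cap W_\omega=0$, this yields the multiplicative reduction
\[ \wt(c_\omega)=q^{kr}\cdot\wt(c_{\omega'}), \]
with $c_{\omega'}$ a codeword of the smaller Grassmann code $\cC_{n-r,k}$. Inserting its minimum distance $q^{k(n-r-k)}$ gives $\wt(c_\omega)\ge q^{kr}q^{k(n-r-k)}=q^{k(n-k)}=d$, with equality only if $\omega'$ is a minimum-weight codeword of $\cC_{n-r,k}$. Arguing by induction on $n$, such $\omega'$ is decomposable; but $\omega'$ has no linear factor while a decomposable element of positive degree always does, so $\deg\omega'=0$, i.e.\ $r=n-k$ and $\omega=\hat W_\omega$ is decomposable.

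The one place the reduction is silent --- and the step I expect to be the main obstacle --- is the base of this induction, the non-degenerate case $r=0$, in which $\omega$ has no linear factor and no reduction in $n$ is available. There one must prove directly that a non-degenerate $\omega\in\bigwedge^{n-k}V$ with $n>k$ (which is then automatically non-decomposable) satisfies the \emph{strict} inequality $\wt(c_\omega)>q^{k(n-k)}$; equivalently, that the hyperplane $[\omega]$ cuts the Grassmann variety in strictly fewer than $\genfrac[]{0pt}0{n}{k}_q-q^{k(n-k)}$ points. This is precisely the rigidity statement that the special Schubert hyperplanes are the \emph{unique} maximal hyperplane sections of $\cG_{n,k}$, and it is where the real content lies; I would attack it either through the classical analysis of maximal linear sections of the Grassmann variety, or by a finer recursion along a flag $U\subset V$ with $\dim U=n-1$, writing $\omega=e\wedge\alpha+\beta$ and splitting the $k$-subspaces according to whether they lie in $U$, which reduces the two contributions to $\cC_{n-1,k}$ and $\cC_{n-1,k-1}$. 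Granting the characterisation, the count is immediate: the minimum-weight codewords are exactly the $c_\omega$ with $\omega$ nonzero decomposable, and these correspond to a point of the Grassmannian $\cG_{n,n-k}$ --- of which there are $\genfrac[]{0pt}0{n}{n-k}_q=\genfrac[]{0pt}0{n}{k}_q$ --- together with a nonzero scalar, giving exactly $(q-1)\genfrac[]{0pt}0{n}{k}_q$ of them.
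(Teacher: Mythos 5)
Your outer framework is sound: the identification of codewords of $\cC_{n,k}$ with elements $\omega\in\bigwedge^{n-k}V$ via the wedge pairing, the computation $\wt(c_{\omega_0})=q^{k(n-k)}$ for decomposable $\omega_0$ (complements of a fixed $(n-k)$-space), the factorization $\omega=\hat W_\omega\wedge\omega'$ along the linear part $W_\omega$, the resulting identity $\wt(c_\omega)=q^{kr}\,\wt(c_{\omega'})$, and the final count $(q-1)\genfrac[]{0pt}0{n}{n-k}_q$ are all correct. But the argument is not a proof, and you say so yourself: the entire difficulty of the theorem is concentrated in the case you leave open, $r=0$, where $\omega$ has no linear factor, your reduction returns $\omega'=\omega$ with no drop in $n$, and the induction has no base to stand on. For such $\omega$ (the first instance being $\omega=e_1\wedge e_2+e_3\wedge e_4$ in $\bigwedge^2V(4,q)$) one must prove the \emph{strict} inequality $\wt(c_\omega)>q^{k(n-k)}$, and nothing you wrote does this. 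Note also that no soft argument can: the inequality is quantitatively tight, since by the second-weight result of Datta--Dutta quoted in the paper the gap is only $q^{k(n-k)-2}$ (in the example above, $\wt(c_\omega)=q^4+q^2$ against $d=q^4$). So one genuinely needs either the flag recursion $\omega=e\wedge\alpha+\beta$ carried through with a full equality analysis, or the ``decomposable subspaces'' machinery of Ghorpade--Patil--Pillai; sketching these as possible attacks, as you do, is not the same as executing one of them.

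For the comparison you asked about: the paper contains no proof of this statement at all --- it is quoted from Nogin~\cite{Nogin1996} and Ghorpade--Patil--Pillai~\cite{Ghorpade2009} --- so your attempt must be measured against those references, and the step you skip is precisely the content of their work. What you have supplied correctly is the routine outer layer (the ``if'' direction, the peeling-off of the radical, and the enumeration of decomposables), which those papers also perform on the way to the theorem; the substance is the non-degenerate case, and it is missing.
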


For any $n$ and $k=2$ the complete weight spectrum of Grassmann line codes  $\cC_{n,2}$ has been computed in~\cite{Nogin1996}.
The number $A_i$ of codewords of weight $i$ of $\cC_{n,2}$  is
\[ A_i=\begin{cases}
  \beta(n,2r) & \text{ if } i=q^{2(n-r-1)}\frac{q^{2r}-1}{q^2-1}
                \text{ and } 0\leq r\leq\lfloor\frac{n}{2}\rfloor \\
  0 & \text{otherwise}
\end{cases} \]
where
\[ \beta(n,2r):=\frac{(q^n-1)(q^{n-1}-1)\dots(q^{n-2r+1}-1)}{
    (q^{2r}-1)(q^{2r-2}-1)\dots(q^2-1)}q^{r(r-1)}. \]
Many (but not all) generalized Hamming weights are also known for $k=2$; see~\cite{Ghorpade2009}.

The computation of the complete weight enumerator of $\cC_{n,k}$
for arbitrary $n$ and $k>2$ is an open problem.

For any $n$ and any $k$, the second minimum weight of $\cC_{n,k}$ has been
determined in~\cite{Datta2025} as
\[ q^{k(n-k)}+q^{k(n-k)-2}. \]
For any $n$ and $k=n-1$ the Grassmann geometry $\cG_{n,n-1}$ is just
the dual projective space $\PG(n-1,q).$
For any $n$ and $k=n-2$ the Grassmann geometry $\cG_{n,n-2}$ is isomorphic to the line Grassmann geometry
$\cG_{n,2}$. So, these last two cases are covered.

%For Grassmann codes with $k\geq3$, the results available on
%the weight spectrum are limited to low dimensional Grassmannians.
%Indeed, if $n-k=1$, then the Grassmannian $\cG_{n,k}$ is just
%the dual projective space $\PG(n-1,q)$; if $n-k=2$, then
%$\cG_{n,n-2}$ is isomorphic to the line grassmannian
%$\cG_{n,2}$. So, these cases are covered and the first
%non-immediate case is for $(n,k)=(6,3)$.
For $(n,k)=(6,3)$ the weight spectrum for $\cC_{6,3}$ has been determined
first for $q=2$ in~\cite{R4} and then in~\cite{Nogin1997}
for general $q$.
The nonzero weight distribution is outlined in the following table
\[ \begin{array}{l|c}
  \text{Weight} & \text{Number of codewords} \\
  \hline
  q^9                & (q^5-1)(q^3-1)(q^2+1) \\
  q^9+q^7            & (q^6-1)(q^5-1)(q^2+q+1)q^2 \\
  q^9+q^7+q^6-q^4    & q^9(q^5-1)(q^3+1)(q^2+1)(q-1)/2 \\
  q^9+q^7+q^6        & (q^6-1)(q^5-1)q^4(q^4-1) \\
  q^9+q^7+q^6+q^4    & q^9(q^5-1)(q^3-1)(q^2-1)(q-1)/2 \\
\end{array}. \]

For $(n,k)=(7,3)$ the weight spectrum for $\cC_{7,3}$ has been addressed in ~\cite{Kaipa2013}; in particular the associated
code has the following $10$ distinct nonzero weights:
\[ \begin{array}{l|c}
  \text{Weight} & \text{Number of codewords} \\
  \hline
  q^{12} & (q^7-1)(q^5-1)(q^2-q+1)/(q-1) \\
 q^{12}+q^{10} & q^2(q^7-1)(q^5-1)(q^4+q^2+1)(q^2+q+1) \\
  q^{12}+q^{10}+q^9-q^7 & \frac{1}{2}q^9(q^7-1)(q^5-1)(q^3+1)(q^2+1) \\
  q^{12}+q^{10}+q^9 & q^4(q^7-1)(q^6-1)(q^5-1)(q^4-1)/(q-1) \\
  q^{12}+q^{10}+q^9+q^7 & \frac{1}{2}q^9(q^7-1)(q^5-1)(q^3-1)(q^2-1) \\
  q^{12}+q^{10}+q^9+q^8-q^7 & \frac{1}{2}q^9(q^7-1)(q^6-1)(q^5-1)(q^2+q+1)(q^2+1) \\
  q^{12}+q^{10}+q^8 & q^6(q^7-1)(q^5-1)(q^3-1) \\
  q^{12}+q^{10}+q^9+q^8 & q^{11}(q^7-1)(q^6-1)(q^5-1)(q^3-1)(q^2+1) \\
                      & +q^6(q^7-1)(q^6-1)(q^5-1)(q^4-1) \\
  q^{12}+q^{10}+q^9+q^8+q^6 & q^{15}(q^7-1)(q^5-1)(q^4-1)(q^3-1)(q-1) \\
  q^{12}+q^{10}+q^9+q^8+q^7 & \frac{1}{2}q^9(q^7-1)(q^6-1)(q^5-1)(q^3-1)(q-1)
\end{array} \]
It can be easily verified that, in all known cases, Grassmann codes satisfy the bound~\eqref{ab-bound}.
As mentioned before,
for other values of $n$ and $k$, the problem of determining the complete weight spectrum
of  $\cC_{n,k}$ is open as well as that of determining its maximum distance, as it would require at least a rough
classification of the possible $k$-multilinear alternating forms
over an arbitrary vector space $V(n,q)$.

We conclude by observing that
Grassmann codes are also of interest since, by Chow's theorem, they
inherit the automorphism group $\PGL(V)$ of $\PG(V)$ as a
large automorphism group~\cite{Ghorpade2013} acting  transitively
on the components of the codewords; this leads also to efficient
error correction and decoding algorithms; see~\cite{Beelen2021}.

%DA RICONTROLLARE I VALORI E VEDERE SE SIAMO NEL BOUND (2) NEI CASI NOTI.

\subsection{Polar Grassmann codes}
Polar Grassmann codes have been introduced in a  series of papers (~\cite{Cardinali2013,  Cardinali2016b, Cardinali2018, Cardinali2018c,
  Cardinali2016a}) and  can be regarded as subcodes obtained
by puncturing  the Grassmann codes  described
in Section~\ref{Grassmann codes}.
We recall their construction in general.

Suppose $V$ is a vector space with $\dim(V)=t$
and $\varphi$ a non-degenerate quadratic, alternating or hermitian  form defined over $V$ with Witt index $n.$
For $k=1,\dots, n-1$, the \emph{polar grassmannian} $\Delta_{n,k}=(\overline{\cP}, \overline{\cL})$ is the point-line geometry where the pointset $\overline{\cP}$ is the set of the $k$-dimensional subspaces of $V$ totally isotropic with respect to $\varphi$ and the lineset is the set $\overline{\cL}=\{\ell_{W,T}\colon \dim(W)=k-1 \text{ and } \dim(T)=k+1\}$
where $\ell_{W,T}:=\{ X: W\leq X\leq T, \dim(X)=k \}$ and $T$ is totally isotropic with respect to $\varphi.$
For $k=n$ the polar grassmannian $\Delta_{n,n}$ is called
\emph{dual polar space}. Its points are the set of maximal
totally isotropic subspaces with respect to $\varphi$ (also called
\emph{generators})
while its lines correspond to the set of all generators containing
a fixed $(n-1)$-dimensional totally isotropic subspace.

It is clear that the polar grassmannian $\Delta_{n,k}$ for $k<n$ is a subgeometry of the $k$-Grassmann geometry of $\PG(V).$
If $k=n$ the geometry $\Delta_{n,n}$ is not necessarily a subgeometry
of the $n$-Grassmann geometry of $\PG(V)$ but, in any case, its
points are elements of $\cG_{t,n}$ and, as such, we can still consider
their image under the Pl\"ucker embedding and study the corresponding
code.

If we consider the restriction $\overline{\vep}_{k}^{gr}$ to $\Delta_{n,k}$ of the  Pl\"ucker embedding $\vep^{gr}_k$ defined in~\eqref{Grassmann embedding} we have an embedding of the polar grassmannian $\Delta_{n,k}$ called again  Pl\"ucker embedding.
The set
\[\overline{\vep}^{gr}_k(\overline{\cP}):=\{\overline{\vep}^{gr}_k(p)\colon p\in \overline{\cP}\}\]
 is a projective system of the projective space $\langle \overline{\vep}^{gr}_k(\Delta_{n,k})\rangle\subseteq \PG(\bigwedge^k V).$

 A \emph{polar Grassmann code} $\cC(\Delta_{n,k})$ is the linear code arising from $\overline{\vep}^{gr}_k(\overline{\cP})$.
 Its length is the size of the pointset $\overline{\cP}$ of $\Delta_{n,k}$  and its dimension is the dimension $\dim(\overline{\vep}^{gr}_k)$ of the  Pl\"ucker embedding $\overline{\vep}^{gr}_k$.
 As the  Pl\"ucker embedding is homogeneous when $k<n$, all these codes inherit
 the automorphism group of the polar space as a monomial automorphism
 group; see~\cite{Cardinali2018a} for more details. In the case $k=n$ the Pl\"ucker
 embedding might not be projective.
 In particular, the symplectic, orthogonal and hermitian Grassmann
 codes have as automorphism group $Sp(t,q)$, $O(t,q)$ and $U(t,q^2)$.
% We shall denote the automorphism group of a code by the letter
% $\Psi$.
For $k=2$, we talk about \emph{line polar grassmannians} and
 the  Pl\"ucker embedding of $\Delta_{n,k}$ is
 hosted in $\PG(\bigwedge^2V)$. Since the codewords of
 $\cC(\Delta_{n,k})$ correspond to linear functionals over
 $\bigwedge^2V$, there is a natural isomorphism between
 $(\bigwedge^2V)^*$ and the space of all alternating bilinear
 forms $\varphi$ over $V$; see e.g.~\cite{Bourbaki1998}.
 We shall always assume that a basis $(e_1,\dots,e_{t})$;
 the dual basis is denoted by $(e^1,\dots,e^t)$.
 Then $(e_i\wedge e_j)_{1\leq j<j\leq t}$ is a basis of
 $\bigwedge^2V$ while $(e^i\wedge e^j)_{1\leq i<j\leq t}$
 is a basis of $(\bigwedge^2V)^*$.

If $\varphi$ is a non-degenerate alternating form, the polar Grassmann code  arising from it is called \emph{symplectic Grassmann code};
if $\varphi$ is a non-degenerate orthogonal form, the code arising from it is called \emph{orthogonal Grassmann code } and if $\varphi$ is a non-degenerate hermitian form, the code arising from it is called
\emph{ hermitian  Grassmann code}.

The only known result on the connectedness of the complement of a geometric hyperplane of $\Delta_{n,k}$ for $k<n$ is proved in~\cite[Theorem 2.1]{Kasikova} and holds for $k=2$  and for polar Grassmann geometries where all lines have at least $4$ points. So, applying~\cite[Theorem 2.1]{Kasikova} and Corollary~\ref{main cor} we have
 \begin{prop}\label{min2}
   Line symplectic Grassmann codes, line orthogonal Grassmann codes and line hermitian Grassmann codes  defined by line polar grassmannians 
   over a finite field $\FF_q\neq\FF_2$  are minimal codes.
 %  having at least four points per line
 \end{prop}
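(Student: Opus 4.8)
The plan is to obtain minimality as a direct application of Corollary~\ref{main cor}. That corollary reduces the problem to showing that \emph{every} geometric hyperplane of the line polar Grassmannian $\Delta_{n,2}$ (in each of the symplectic, orthogonal and hermitian types) is a maximal subspace. By Proposition~\ref{cmhp}, maximality of a geometric hyperplane $\cH$ is equivalent to the connectedness of the subgraph induced by the collinearity graph of $\Delta_{n,2}$ on the complement $\bar{\cP}\setminus\cH$. Hence the whole statement follows once this connectedness is established for all geometric hyperplanes, which is exactly the content of~\cite[Theorem 2.1]{Kasikova}, valid for $k=2$ provided every line of the geometry carries at least four points.

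So the only thing left to verify is that the hypothesis $\FF_q\neq\FF_2$ guarantees that every line of $\Delta_{n,2}$ has at least four points. First I would recall that a line $\ell_{W,T}$ of $\Delta_{n,2}$ is, by definition, the set of all totally isotropic $2$-dimensional subspaces $X$ with $W\leq X\leq T$, where $\dim W=1$ and $\dim T=3$ with $T$ totally isotropic; since every subspace of the totally isotropic space $T$ is itself totally isotropic, these $X$ are precisely the $2$-dimensional subspaces lying between $W$ and $T$. They are therefore in bijection with the points of the projective line $\PG(T/W)$, so their number equals $q+1$ in the symplectic and orthogonal cases (base field $\FF_q$) and $q^2+1$ in the hermitian case (base field $\FF_{q^2}$). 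The condition $\FF_q\neq\FF_2$, i.e.\ $q\geq 3$, forces $q+1\geq 4$, while $q^2+1\geq 4$ holds unconditionally; thus every line has at least four points in all three cases.

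Putting the pieces together: under $\FF_q\neq\FF_2$ the line-size hypothesis of~\cite[Theorem 2.1]{Kasikova} holds, so the complement of every geometric hyperplane of $\Delta_{n,2}$ is connected; by Proposition~\ref{cmhp} all geometric hyperplanes of $\Delta_{n,2}$ are then maximal; and by Corollary~\ref{main cor} the code $\cC(\Delta_{n,2})$ arising from the restricted Grassmann embedding $\bar{\vep}^{gr}_2$ is minimal, simultaneously for each of the three types.

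I expect the main obstacle to lie not in the argument above, which is essentially a matter of matching hypotheses, but in the deep connectedness result~\cite[Theorem 2.1]{Kasikova} that we are invoking: the genuine geometric content sits there, whereas on our side the only real verification is the elementary point count and the observation that $q=2$ is precisely the threshold excluded by the requirement of four points per line.
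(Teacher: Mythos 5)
Your proposal is correct and follows exactly the paper's own route: invoke~\cite[Theorem 2.1]{Kasikova} for connectedness of hyperplane complements in $\Delta_{n,2}$, pass through Proposition~\ref{cmhp} to maximality, and conclude via Corollary~\ref{main cor}. The only difference is that you explicitly carry out the point-count on lines ($q+1$ in the symplectic/orthogonal cases, $q^2+1$ in the hermitian case) showing that $\FF_q\neq\FF_2$ is precisely the condition needed for Kasikova's four-point hypothesis, a verification the paper leaves implicit.
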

 For $k=n$, the complement of a geometric hyperplane of a
 classical dual polar space $\Delta_{n,n}$
 is always connected; see~\cite{ShultThas,Cardinali2009,McInroy2010}.
 It is also well known that classical dual polar spaces of
 symplectic and orthogonal type afford projective embeddings.
 So, by Corollary~\ref{main cor} and Proposition~\ref{cmhp}, we have
 \begin{theorem}
   \label{dpp}
   Let $\Delta_{n,n}$ be a dual polar space of either symplectic
   or orthogonal type. Then, for any projective embedding
   $\vep:\Delta_{n,n}\to\PG(V)$ the code $\cC(\vep(\Delta_{n,n}))$
   is minimal.
 \end{theorem}
 \begin{remark}
   The Grassmann embedding of a symplectic dual polar space is projective.
   The Grassmann embedding of an orthogonal dual polar space is not projective;
   however also in this latter case, $\Delta_{n,n}$ affords a projective
   embedding of dimension $2^n$ which is projective: the \emph{spin
     embedding}; see~\cite{Chevalley1954}.
   Theorem~\ref{dpp} in this case applies to the codes
   arising from the image of the spin embedding as
   well as to the quotients of this embedding.
 \end{remark}
 
In the rest of this section we survey the current state of the art art regarding the information available from the literature on the parameters of polar Grassmann codes.

\subsubsection{\sc Symplectic Grassmann codes}
Let $V:=V(2n,q)$ be a $2n$-dimensional vector space equipped
with a non-degenerate bilinear alternating form $\varphi$ with Witt index $n$.

For $k=1,\dots, n$, denote by $\cS_{n,k}$  the symplectic grassmannian induced by $\varphi.$
%is defined for $k=1,\dots,n$ as the subgeometry of $\cG_{2n,k}$ having as
%points the totally $\varphi$--isotropic
%subspaces of $V$ of dimension $k$ and as lines
%\begin{itemize}
%\item for $k< n$, the sets of the form $\ell_{W,T}:=\{ X: W\leq X\leq T, \dim X=k \}$
%   with $T$ totally isotropic and $\dim W=k-1$, $\dim T=k+1$.
% \item for $k=n$, the sets of the form
%$\ell_{W}:=\{ X: W\leq X, \dim X=n\}$
%   with $\dim W=n-1$, $W$ totally isotropic.
%\end{itemize}
For $k=n$, $\cS_{n,n}$ is usually called \emph{ symplectic dual polar space of rank $n$}
or the \emph{lagrangian grassmannian}.
%or the \emph{Lagrangian Grassmannian} of $W(2n-1,q)$.

The known results on the main parameters of a symplectic $[N,K, d]_q$-Grassmann codes defined by $\varphi$ are summarized in the following.

For $1\leq k\leq n$,
the parameters of $\cC({\cS}_{n,k})$ are
\[N=
\prod_{i=0}^{k-1}\frac{q^{2(n-i)}-1}{q^{i+1}-1},
\qquad K= \binom{2n}{k}-\binom{2n}{k-2}. \]

\begin{theorem}[\cite{mexicans}]
  For $k=n>2$, $d\leq q^{n(n+1)/2}$.
	For $k=n=2$, $d=q^3-q$.
\end{theorem}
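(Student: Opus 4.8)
The plan is to treat the two assertions separately, since the general upper bound and the exact rank-$2$ value rest on different ideas. Throughout, a codeword of $\cC(\cS_{n,n})$ is a linear functional $f$ on the embedding space $\langle\bar{\vep}^{gr}_n(\cS_{n,n})\rangle\subseteq\PG(\bigwedge^nV)$, and by~\eqref{min distance 1} its weight equals the number of generators $G$ (totally isotropic $n$-subspaces) with $f(\bar{\vep}^{gr}_n(G))\neq 0$, so the whole problem is to locate a geometric hyperplane with large section.

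For the bound $d\leq q^{n(n+1)/2}$ I would exhibit one codeword of that weight. Fix a generator $G_0$ and let $\omega_0:=\bar{\vep}^{gr}_n(G_0)\in\bigwedge^nV$ be its (decomposable) Pl\"ucker image; take the functional
\[ f(x):=\omega_0\wedge x\in\textstyle\bigwedge^{2n}V\cong\FF_q. \]
Since both $\omega_0$ and $\bar{\vep}^{gr}_n(G)$ are decomposable, $f(\bar{\vep}^{gr}_n(G))=\omega_0\wedge\bar{\vep}^{gr}_n(G)\neq 0$ holds exactly when $G\cap G_0=0$. The weight of $f$ is therefore the number of generators disjoint from $G_0$, and the restriction of $f$ to the span is a genuine nonzero codeword of the symplectic subcode precisely because this number is positive.

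The key computation is that this number equals $q^{n(n+1)/2}$. I would fix a symplectic basis adapted to $G_0$, choose a complementary generator $G_0'$, and write every $G$ disjoint from $G_0$ as the graph of a linear map $A\colon G_0'\to G_0$; a short bilinear-form computation shows that $G$ is totally isotropic if and only if the matrix of $A$ is symmetric. As there are exactly $q^{n(n+1)/2}$ symmetric $n\times n$ matrices over $\FF_q$, the weight of $f$ is $q^{n(n+1)/2}$, whence $d\leq q^{n(n+1)/2}$. This construction already gives $d\leq q^3$ for $n=2$, but that estimate is not sharp, which is exactly why the rank-$2$ case must be handled separately. For $k=n=2$ I would instead identify the geometry explicitly: $\cS_{2,2}$ is a generalized quadrangle, and by the Klein correspondence its totally isotropic lines correspond to the points of a parabolic quadric $Q(4,q)$ in $\PG(4,q)$, with $\bar{\vep}^{gr}_2$ coinciding with this quadric embedding (consistent with $K=\binom{4}{2}-\binom{4}{0}=5$). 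Hence $\cC(\cS_{2,2})$ is the projective code of the pointset of $Q(4,q)$, and by~\eqref{min distance 1} one has $d=N-\max_H|Q(4,q)\cap H|$ over hyperplanes $H$ of $\PG(4,q)$. Invoking the classical classification of hyperplane sections of $Q(4,q)$ (a tangent hyperplane meets it in a quadratic cone with $q^2+q+1$ points, a non-tangent one in an elliptic quadric $Q^-(3,q)$ with $q^2+1$ points or a hyperbolic quadric $Q^+(3,q)$ with $q^2+2q+1$ points), the largest section is the hyperbolic one, so with $N=q^3+q^2+q+1$ I obtain $d=(q^3+q^2+q+1)-(q^2+2q+1)=q^3-q$.

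The main obstacle lies in the bookkeeping rather than in a single deep step. For the general bound one must carry out the graph-of-a-symmetric-map count carefully so as to pin the weight to exactly $q^{n(n+1)/2}$ and to justify the wedge nonvanishing criterion $G\cap G_0=0$. For $n=2$ everything hinges on correctly matching the symplectic Grassmann embedding with the standard quadric embedding of $Q(4,q)$, so that the classical section counts apply verbatim and the maximum is attained by the hyperbolic section.
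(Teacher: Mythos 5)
Your proof is correct and takes essentially the same route as the source on which the paper relies: the paper states this theorem without proof, citing \cite{mexicans}, where the upper bound likewise comes from the codeword $x\mapsto\omega_0\wedge x$ whose weight is the number $q^{n(n+1)/2}$ of Lagrangians disjoint from a fixed one (the big Schubert cell), and the rank-two value from identifying the Lagrangian Grassmannian with the parabolic quadric $Q(4,q)\subset\PG(4,q)$ and taking its largest (hyperbolic) hyperplane section. Your two key computations --- the symmetric-matrix parametrization of the Lagrangians complementary to $G_0$, and the section sizes $q^2+q+1$, $q^2+1$, $(q+1)^2$ giving $d=(q^3+q^2+q+1)-(q^2+2q+1)=q^3-q$ --- are both sound.
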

\begin{theorem}[\cite{Cardinali2016b}]
  If $k=2$ and $n\geq2$, then {$d=q^{4n-5}-q^{2n-3}$}.
  If $k=n=3$ then  $d=q^6-q^4$.
\end{theorem}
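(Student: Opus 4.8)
The plan is to use the dictionary, recalled in Section~\ref{emb-codes}, between codewords and alternating forms. For $k=2$ a word of $\cC(\cS_{n,2})$ is indexed by a linear functional on $\langle\bar{\vep}^{gr}_2(\bar{\cP})\rangle\subseteq\PG(\bigwedge^2V)$, hence by an alternating bilinear form $\psi$ on $V=V(2n,q)$, and its value on a point $\langle v_1,v_2\rangle\in\bar{\cP}$ is $\psi(v_1,v_2)$. Since $K=\binom{2n}{2}-1$, the functional $\varphi$ itself annihilates every isotropic wedge, so the word depends only on the class of $\psi$ modulo $\langle\varphi\rangle$, i.e.\ only on the pencil $\langle\varphi,\psi\rangle$. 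The weight is then $\wt(c_\psi)=N-Z(\psi)$, where $Z(\psi)$ is the number of $2$-subspaces of $V$ that are totally isotropic for both $\varphi$ and $\psi$. Minimising the weight thus amounts to maximising $Z(\psi)$ over nonzero classes, and since the symplectic group $\mathrm{Sp}(2n,q)$ fixes $\varphi$, acts monomially on the code, and preserves $Z$, it suffices to maximise $Z$ over $\mathrm{Sp}(2n,q)$-orbits of pencils.

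First I would classify these orbits. A pencil $\langle\varphi,\psi\rangle$ containing the nondegenerate form $\varphi$ is determined, up to $\mathrm{Sp}(2n,q)$, by the invariant factors (the Kronecker--Weierstrass data) of the endomorphism $\varphi^{-1}\psi$; writing $V$ as an orthogonal sum of the corresponding $\varphi$-nondegenerate indecomposable blocks reduces the computation of $Z(\psi)$ to counting common totally isotropic lines and planes inside each block and then combining. Running over the finitely many types and comparing the resulting counts should single out the extremal configuration. Guided by the rank-$2$ analysis---if $\psi=\alpha\wedge\beta$ then a $2$-space is totally $\psi$-isotropic exactly when it meets $\Rad(\psi)=\ker\alpha\cap\ker\beta$ nontrivially, and taking $\Rad(\psi)$ to be a $\varphi$-nondegenerate $(2n-2)$-subspace makes every $\varphi$-isotropic plane through a point of the radical contribute---I expect the maximum of $Z$ to be attained by a rank-$2$ form with nondegenerate radical. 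Evaluating $Z$ for this type (a Gaussian-binomial count of $\varphi$-isotropic planes meeting a nondegenerate complement) should yield $Z(\psi)=N-(q^{4n-5}-q^{2n-3})$, hence $d=q^{4n-5}-q^{2n-3}$; the case $n=2$ already checks out, giving $Z=(q+1)^2$ and $d=q^3-q$, in agreement with the theorem of~\cite{mexicans}.

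For the dual polar case $k=n=3$ the same strategy applies but over $\bigwedge^3V$ with $V=V(6,q)$: the points of $\cS_{3,3}$ are the generators, the span $W=\langle\bar{\vep}^{gr}_3(\bar{\cP})\rangle$ has dimension $K=\binom{6}{3}-\binom{6}{1}=14$, and a word is indexed by an alternating trilinear form modulo the $6$-dimensional subspace $\varphi\wedge V^*$ that annihilates all isotropic $3$-wedges, with weight $N$ minus the number of generators on which the form vanishes. One would classify the relevant forms under $\mathrm{Sp}(6,q)$ using the decomposition of $\bigwedge^3V$ into the $14$-dimensional fundamental module and its $6$-dimensional complement, determine the orbit minimising the weight, and evaluate the corresponding generator count to obtain $d=q^6-q^4$. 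The main obstacle, in both cases, is exactly this orbit analysis: producing a clean classification of the pencils (respectively of the $\mathrm{Sp}(6,q)$-orbits of trilinear forms), and then proving that the conjecturally extremal orbit really does maximise the number of common totally isotropic subspaces rather than merely being a plausible candidate. The counting within each fixed orbit is routine linear algebra, but certifying global maximality across all orbits is the delicate point.
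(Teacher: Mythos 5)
Your setup is correct and it is essentially the framework of the cited proof in~\cite{Cardinali2016b}: codewords of $\cC(\cS_{n,2})$ correspond to alternating bilinear forms $\psi$ modulo $\langle\varphi\rangle$, the weight of the word attached to $\psi$ equals $N-Z(\psi)$ where $Z(\psi)$ counts the $2$-spaces totally isotropic for both $\varphi$ and $\psi$, and your conjectured extremal orbit is the right one: a rank-two form with $\varphi$-nondegenerate radical is exactly the condition that $\varphi^{-1}\psi$ has two eigenspaces, of dimensions $2n-2$ and $2$ (note that eigenspaces of the $\varphi$-self-adjoint operator $\varphi^{-1}\psi$ for distinct eigenvalues are $\varphi$-orthogonal), which matches the characterization of minimum-weight codewords reported immediately after the theorem in this paper. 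Your count for that candidate is also right: a $\varphi$-isotropic $2$-space avoiding the radical $R$ is necessarily a complement of $R$, hence the graph of an isometric embedding of $(R^{\perp},-\varphi|_{R^\perp})$ into $(R,\varphi|_R)$, and there are $(q^{2n-2}-1)q^{2n-3}=q^{4n-5}-q^{2n-3}$ such graphs.

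However, what this gives is only the upper bound $d\le q^{4n-5}-q^{2n-3}$ (and, for $k=n=3$, $d\le q^6-q^4$ once the analogous candidate is exhibited). The theorem is the matching lower bound, i.e.\ that $Z(\psi)$ is maximized precisely on your candidate orbit, and this is the part you explicitly defer (``certifying global maximality across all orbits is the delicate point''). That deferral is not a finishing touch: it is the entire content of the result and is where all the work in~\cite{Cardinali2016b} lies. Two concrete obstacles stand between your plan and a proof. First, the classification you propose to run over is not Kronecker--Weierstrass theory, which is an algebraically-closed-field statement; over $\FF_q$ the $\mathrm{Sp}(2n,q)$-orbits of pairs $(\varphi,\psi)$ are governed by the rational canonical form of $\varphi^{-1}\psi$ together with additional form-theoretic invariants on the generalized eigenspaces, so the list of types is longer and the per-orbit counts are not uniform in the way your sketch suggests. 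Second, for $k=n=3$ the objects to classify are alternating trilinear forms on $V(6,q)$ modulo $\varphi\wedge V^*$ under $\mathrm{Sp}(6,q)$; this classification is genuinely delicate and characteristic-sensitive (small characteristics such as $p=2,3$ behave differently), so ``routine linear algebra per orbit'' understates the difficulty even before the global comparison of $Z$ across orbits. As it stands, the proposal correctly identifies the strategy and the extremal configuration, and verifies consistency in the smallest case against~\cite{mexicans}, but it proves strictly less than the statement.
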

We now describe the minimum weight codewords for $k=2$.
\begin{theorem}[\cite{Cardinali2016b}]
  Let $M$ be the matrix representing the non-degenerate
  alternating form $\varphi$. Then an alternating matrix
  $S$ represents a minimum weight codeword for $\cC(\cS_{n,2})$ if
  and only if $M^{-1}S$ has exactly $2$ eigenspaces of dimension
  respectively $2n-2$ and $2$.
\end{theorem}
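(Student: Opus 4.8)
The plan is to recast the statement as a question about alternating forms and then exploit the classification of such forms relative to $\varphi$. Using the identification recalled above of $(\bigwedge^2V)^*$ with the space of alternating bilinear forms on $V$, a nonzero codeword of $\cC(\cS_{n,2})$ is represented by an alternating matrix $S$, and the coordinate of this word at a totally isotropic line $\langle u,v\rangle$ of $\cS_{n,2}$ equals $u^TSv$. Hence the weight of the word is the number of $\varphi$-totally isotropic $2$-subspaces $\langle u,v\rangle$ on which $S$ does not vanish, so that minimizing the weight is exactly maximizing the number of $2$-subspaces totally isotropic for both $\varphi$ and $S$. Since every point of $\cS_{n,2}$ is $\varphi$-isotropic, the form $M$ yields the all-zero word; thus $S$ and $S+cM$ represent the same codeword and I may work modulo $\langle M\rangle$.

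First I would introduce the operator $A:=M^{-1}S$. A short computation, valid in every characteristic because $M$ and $S$ are alternating, gives $A^{T}=SM^{-1}$ and hence $\varphi(Au,v)=\varphi(u,Av)=u^TSv$ for all $u,v$; that is, $A$ is self-adjoint for $\varphi$. Under $g\in\mathrm{Sp}(V,\varphi)$ one has $S\mapsto g^TSg$ and $A\mapsto g^{-1}Ag$, while replacing $S$ by $S+cM$ replaces $A$ by $A+cI$. Consequently the weight is an invariant of the conjugacy class of $A$ taken modulo scalar matrices. This is the decisive reduction: the combinatorics of the word is governed entirely by the $\varphi$-self-adjoint operator $A$ up to conjugacy and additive scalars.

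Next I would apply the normal-form theory for $\varphi$-self-adjoint operators (equivalently, for pencils of alternating forms): $V$ splits as a $\varphi$-orthogonal direct sum of the $A$-primary components, the restriction of $\varphi$ to each component is non-degenerate, hence each has even dimension, and distinct components are orthogonal for both $\varphi$ and $S$. On such a decomposition the two conditions ``$\varphi$-isotropic'' and ``$S$-isotropic'' split component by component, which reduces the global count to a manageable combination of isotropic counts inside the individual components. Working these out yields an explicit formula for the weight in terms of the invariants of $A$: the multiplicities of its eigenvalues, the sizes of its Jordan blocks, and the degrees of the irreducible factors of its characteristic polynomial that do not split over $\FF_q$.

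Finally I would optimise this formula. Reducing modulo $M$, I can choose the representative $N:=A-cI$ with $\rank N=2n-m$, where $m$ is the largest dimension of an eigenspace of $A$ over $\FF_q$; this rank is smallest, equal to $2$, precisely when $A$ is diagonalisable with two eigenvalues of multiplicities $2n-2$ and $2$ (both even, as required). The subtle point---and the main obstacle---is that the rank of $N$ alone does not determine the weight: a nilpotent rank-$2$ perturbation, for which $\mathrm{Im}\,N\subseteq\ker N$, sits in a different geometric position relative to $\varphi$ than the semisimple rank-$2$ perturbation of the stated type, for which $\ker N$ and $\mathrm{Im}\,N$ are complementary and $\varphi$-orthogonal. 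I therefore expect the heart of the proof to be showing that, among all admissible $\varphi$-self-adjoint types, this split semisimple type annihilates strictly the largest number of totally isotropic lines---so that non-diagonalisable parts and eigenvalues in proper extensions of $\FF_q$ each strictly increase the weight. The direct count in the stated case should then return exactly $q^{4n-5}-q^{2n-3}$, matching the minimum distance of the preceding theorem and pinning down the extremal configuration uniquely.
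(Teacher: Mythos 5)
Your setup is correct and coincides with how the cited proof begins: the coordinates of the codeword attached to an alternating matrix $S$ are the values $u^TSv$ on the totally $\varphi$-isotropic $2$-spaces, the kernel of the evaluation is $\langle M\rangle$, the operator $A=M^{-1}S$ is $\varphi$-self-adjoint, the weight is an invariant of $A$ up to conjugacy and scalar shifts, and eigenspaces of $A$ have even codimension because $S-\lambda M$ is alternating. But the argument then has two genuine gaps. First, the count of $2$-spaces totally isotropic for both $\varphi$ and $S$ does \emph{not} split along the primary decomposition of $A$: such a $2$-space need not be spanned by vectors lying in the primary components, so your proposed reduction only accounts for the lines adapted to the decomposition, and no explicit weight formula actually follows from that step. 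Second, the extremal comparison that you yourself call the heart of the proof --- that the semisimple type with eigenspace dimensions $(2n-2,2)$ strictly beats every other admissible type, in particular the nilpotent rank-two perturbation --- is left as an expectation; since that comparison \emph{is} the content of the theorem, the proposal stops exactly where the proof should start.

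Both gaps are closed at once by a pointwise count, which is the route of~\cite{Cardinali2016b} and which you never set up. Fix a point $[u]$ of the polar space. A polar line $\langle u,v\rangle$ with $v\in u^{\perp_\varphi}$ carries a nonzero coordinate iff $u^TSv\neq 0$; this happens for no line through $[u]$ when $Su\in\langle Mu\rangle$ (i.e.\ when $[u]$ is an eigenvector of $A$), and for exactly $q^{2n-3}$ lines through $[u]$ otherwise. Moreover every point of a line carrying a nonzero coordinate is a non-eigenvector, so summing over points and dividing by $q+1$ gives, for the codeword $c_S$ determined by $S$,
\[
\wt(c_S)=\frac{q^{2n-3}}{q+1}\left(\frac{q^{2n}-1}{q-1}-E\right),
\]
where $E$ is the number of projective points that are eigenvectors of $A$. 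Hence the weight depends on $A$ \emph{only} through $E$: the Jordan structure is irrelevant, and minimizing the weight is the elementary problem of maximizing $E=\sum_i\frac{q^{d_i}-1}{q-1}$ over the eigenspace dimensions $d_i$, each of even codimension by your parity remark, with $A$ non-scalar. The maximum forces exactly two eigenspaces, of dimensions $2n-2$ and $2$ (so $A$ is automatically diagonalizable), and the formula evaluates to $q^{2n-3}(q^{2n-2}-1)=q^{4n-5}-q^{2n-3}$, as expected. In particular the obstacle you flagged dissolves: the nilpotent rank-two type has $E=\frac{q^{2n-2}-1}{q-1}$, smaller by exactly $q+1$ than the semisimple type, hence strictly larger weight.
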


% \begin{remark}
%   We observe that in this case the matrix $S$ might have
%   full rank; this is a strikingly different behavior than
%   what happens with other line polar Grassmann codes.
% \end{remark}
%The image of $\cS_{n,k}$ under the  Pl\"ucker embedding
%$\vep_k^{gr}$ is a projective system of $\PG(\Sigma)\subset  \PG(\bigwedge^kV)$;  this
%determines a projective code called it a \emph{symplectic Grassmann code} and denoted $\cW(n,k)$. %By~\cite{Cardinali2016b}  and~\cite[Theorem 2.1]{Kasikova} we have

% \begin{theorem}
%  The code $\cW(n,k)$ has parameters
%  \[ N=\prod_{i=0}^{k-1}(q^{2n-2i}-1)/(q^{i+1}-1),\qquad
%     K={2n\choose k}-{2n\choose{k-2}}. \]
%Furthermore,
%     \begin{itemize}
%     \item For $k=2$, its minimum distance is $q^{4n-5}-q^{2n-3}$ and $\cW(n,2)$ is a minimal code.
%     \item For $n=k=3$, its minimum distance is $q^6-q^4$.
%     \end{itemize}
%\end{theorem}

\subsubsection{\sc Orthogonal Grassmann codes}
Let $V:=V(2n+1,q)$ be a $(2n+1)$-dimensional vector space and let $\eta:V\to\FF_q$ be a fixed non-degenerate quadratic form over $V$ with Witt index $n.$ For $k=1,\dots, n$, denote by $\cO_{n,k}$ the
\emph{orthogonal grassmannian} associated to $\eta.$ %, that is $\cO_{n,k}$ is the point-line geometry whose points are the $k$--subspaces of $V$ which are
%totally singular for $\eta$ and whose lines are defined as follows
%\begin{itemize}
%  \item  if $k<n$, then $\ell_{X,Y}:=\{Z\colon  X<Z<Y\colon \dim Z=k\}$,
%with $\dim X=k-1$, $\dim Y=k+1$ and $Y$ totally singular;
%  \item  if  $k=n$, then $\ell_{X}:=\{Z\colon X<Z<X^{\perp_{\eta}}\colon \dim Z=n\}$, with $\dim X=n-1$, $Z$ totally singular and $X^{\perp_{\eta}}:=\{y\in V\colon \beta(x,y)=0 \textrm{  for all }x\in X\}$, where $\beta$ is the sesquilinearization of $\eta.$% is the orthogonal relation defined by $\eta$.
%\end{itemize}

The known results on the main parameters of an orthogonal  $[N,K, d]_q$-Grassmann code $\cC(\cO_{n,k})$ are the following

\begin{theorem}\cite{Cardinali2013}
	For $1\leq k< n$,
	the parameters of $\cC(\cO_{n,k})$ are
	\[N=
	\prod_{i=0}^{k-1}\frac{q^{2(n-i)}-1}{q^{i+1}-1},
	\qquad K=\left\{\begin{array}{ll}
		\binom{2n+1}{k} & \text{for $q$ odd} \\
		\binom{2n+1}{k}-\binom{2n+1}{k-2} &
		\text{for $q$ even}\\
	\end{array}\right. \,\,\,\, \]
	\[ d\geq (q+1)(q^{k(n-k)}-1)+1. \]
	%Furthermore, $\Aut(\cC_{n,k})/F_q^*\simeq PO(2n+1,q)$.
\end{theorem}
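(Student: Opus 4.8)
The plan is to establish the three invariants $N$, $K$ and $d$ separately, since each reflects a different feature of $\cO_{n,k}$: $N$ is a pure counting quantity, $K$ is the vector dimension of the span of the embedding inside $\bigwedge^k V$, and, via the minimum-distance formula $d=N-\max_{\cH}|\cH|$ where $\cH$ ranges over the geometric hyperplanes of $\cO_{n,k}$ arising from $\bar\vep^{gr}_k$, the distance reduces to an upper bound on the largest hyperplane section of $\Omega:=\bar\vep^{gr}_k(\bar\cP)$.

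For the length I would count the totally singular $k$-subspaces of $(V,\eta)$ directly. Fixing a hyperbolic basis, the number of singular $1$-spaces is $(q^{2n}-1)/(q-1)$; passing to the perp-quotient $p^\perp/p$ of a singular point $p$ yields a non-degenerate parabolic space of rank $n-1$, so a totally singular $k$-subspace is assembled one singular point at a time. Counting ordered singular chains and dividing by the number of ordered bases of a fixed $k$-space gives a telescoping product which, after the factorisation $q^{2(n-i)}-1=(q^{n-i}-1)(q^{n-i}+1)$, rewrites as $\binom{n}{k}_q\prod_{i=0}^{k-1}(q^{n-i}+1)$ and hence as the stated $N$.

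The dimension rests on a characteristic dichotomy for the polarisation $\beta$ of $\eta$. The images $\bar\vep^{gr}_k(X)=[v_1\wedge\dots\wedge v_k]$ of singular subspaces span an $O(V)$-submodule $M\subseteq\bigwedge^k V$, and the only candidate linear relations arise from contracting with $\beta$. In even characteristic $\beta$ is alternating, so the contraction $c_\beta\colon\bigwedge^k V\to\bigwedge^{k-2}V$ is well defined and nonzero; since $\beta(v_i,v_j)=0$ on a singular subspace we get $M\subseteq\ker c_\beta$, and splitting $V=W\oplus\Rad(\beta)$ with $\dim W=2n$ and invoking Pascal's identity $\binom{2n}{k-2}+\binom{2n}{k-3}=\binom{2n+1}{k-2}$ gives $\dim\ker c_\beta=\binom{2n+1}{k}-\binom{2n+1}{k-2}$. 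In odd characteristic, by contrast, $\beta$ is symmetric, so pairing it against the antisymmetric space $\bigwedge^k V$ produces no relation at all, and there one expects $M=\bigwedge^k V$, giving $K=\binom{2n+1}{k}$. In both cases the remaining work is the spanning statement $M=\ker c_\beta$ (respectively $M=\bigwedge^k V$), which I would obtain by exhibiting enough singular decomposables: from a hyperbolic basis every basis $k$-vector is a combination of wedges of singular basis vectors, using that each basis vector lies on a hyperbolic plane whose singular points span it.

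Finally, for the distance I would use $d=N-\max_{\cH}|\cH|$ and bound $|\cH|$ for the geometric hyperplane $\cH=(\bar\vep^{gr}_k)^{-1}([\varphi])$ cut out by a nonzero functional $\varphi\in M^*$, viewed as an alternating $k$-form on $V$; then $|\cH|$ counts the singular $k$-subspaces annihilated by $\varphi$, and the task is to bound \emph{below} the number on which $\varphi$ does not vanish. I would argue by induction on $n$ (and on $k$), fixing a singular point $p$ and separating the singular $k$-subspaces through $p$ from those living in $p^\perp/p$, thereby reducing to an orthogonal Grassmannian of smaller rank together with an ordinary Grassmann-code estimate of minimum weight $q^{k(n-k)}$; the factor $q+1$ tracks the two ways a singular line through $p$ extends, while the $-1,+1$ corrections account for the fixed point. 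The main obstacle is precisely this last step: the bound must hold uniformly for \emph{every} nonzero $\varphi$, not merely the decomposable ones, which amounts to controlling the zero locus of an arbitrary alternating $k$-form on the isotropic Grassmannian — essentially a rough classification of $k$-forms modulo the orthogonal group — and it is here that the argument is most delicate and that the inequality, rather than an equality, in the stated bound originates.
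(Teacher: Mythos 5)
This theorem is not proved in the paper you were given: it is quoted there from \cite{Cardinali2013}, so your proposal can only be measured against the argument of that reference. Your computation of $N$ is correct and is the standard chain-counting argument. For $K$, your upper bound in even characteristic is also sound: the polarization $\beta$ of $\eta$ is alternating when $q$ is even, every totally singular subspace is totally isotropic for $\beta$, so the image lies in the kernel of the contraction $c_\beta\colon\bigwedge^kV\to\bigwedge^{k-2}V$, and the splitting $V=W\oplus\Rad(\beta)$ gives $\dim\ker c_\beta=\binom{2n+1}{k}-\binom{2n+1}{k-2}$. The genuine gap is the \emph{spanning} half of the dimension claim, in both characteristics. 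Your proposed generation argument --- write each vector of a hyperbolic basis as a combination of singular vectors and expand $v_1\wedge\dots\wedge v_k$ --- produces wedges $u_1\wedge\dots\wedge u_k$ in which each $u_i$ is singular but the subspace $\langle u_1,\dots,u_k\rangle$ is in general \emph{not} totally singular (already for $k=2$ two singular vectors may span a hyperbolic line); such wedges are not known to lie in the span of $\bar{\vep}^{gr}_k(\bar{\cP})$, which is exactly what you are trying to prove, so the argument is circular. Similarly, ``a symmetric $\beta$ pairs to zero against $\bigwedge^kV$, hence no relations'' only says that this particular construction yields no relation; it does not show that the image spans all of $\bigwedge^kV$ for $q$ odd. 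This generation statement is the hard content of the dimension formula, and in \cite{Cardinali2013} it is established by an explicit inductive construction of $K$ totally singular $k$-spaces with linearly independent images.

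For $d$ you concede you cannot close the argument, and the route you sketch (induction on the rank, controlling the zero locus of an arbitrary alternating $k$-form) is precisely what the proof in \cite{Cardinali2013} is designed to avoid; no classification of forms is needed. The mechanism producing exactly $(q+1)(q^{k(n-k)}-1)+1$ is, in substance, the following: given a hyperplane $H$ and a totally singular $k$-space $X$ with $\bar{\vep}^{gr}_k(X)\notin H$, one exhibits $q+1$ generators (maximal totally singular subspaces) $M_0,\dots,M_q\supseteq X$ pairwise meeting exactly in $X$ (equivalently, $q+1$ pairwise disjoint generators of the quotient polar space $X^\perp/X$). Every $k$-subspace of $M_i$ is totally singular, so each $M_i$ carries a full ordinary Grassmannian $\cG_{n,k}$ inside $\cO_{n,k}$, whose Pl\"ucker image contains the point $\bar{\vep}^{gr}_k(X)\notin H$; since the minimum distance of the ordinary Grassmann code is $q^{k(n-k)}$, each $M_i$ yields at least $q^{k(n-k)}-1$ points off $H$ besides $X$, and the condition $M_i\cap M_j=X$ makes these contributions pairwise disjoint, giving $1+(q+1)(q^{k(n-k)}-1)$ points off $H$ in total. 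Neither the factor $q+1$ (your ``two ways a singular line through $p$ extends'' does not correspond to any such structure) nor this leverage of the known Grassmann-code distance appears in usable form in your sketch, so of the three parameters only $N$ is actually proved in your proposal.
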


\begin{theorem}\cite{Cardinali2013}
  The following hold:
	\begin{itemize}
		\item The code $\cC(\cO_{2,2})$
		has parameters
		\[N=(q^2+1)(q+1),\qquad
		K=\left\{\begin{array}{ll}
			10 & \text{for $q$ odd} \\
			9  & \text{for $q$ even}
		\end{array}
		\right.,\qquad d=q^2(q-1). \]
		%\[ d=q^2(q-1). \]
		\item The code $\cC(\cO_{3,3})$
		has parameters
		\begin{small}
			\[
			\begin{array}{l l}
				\left.\begin{array}{ll}
					N=(q^3+1)(q^2+1)(q+1),\,\,\,& K=35,\,\,\, \\
					\multicolumn{2}{c}{d=q^2(q-1)(q^3-1)}
				\end{array}\right\}&\text{ for $q$ odd}  \\[.3cm]
				\multicolumn{2}{c}{\text{and}} \\[.2cm]
				\left.\begin{array}{ll}
					N=(q^3+1)(q^2+1)(q+1),\,\,\,& K=28,\,\,\, \\
					\multicolumn{2}{c}{d=q^5(q-1)}
				\end{array}\right\}&\text{ for $q$ even}.\\
			\end{array}\]
		\end{small}
	\end{itemize}
\end{theorem}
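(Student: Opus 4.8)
The plan is to treat the three code parameters $N$, $K$, $d$ one at a time, using throughout that these are the two dual--polar cases $k=n$: the points of $\cO_{n,n}$ are the generators (maximal totally singular subspaces, of vector dimension $n$) of the parabolic quadric $Q(2n,q)$ carried by $\eta$ on $V=V(2n+1,q)$, and the embedding is the restriction of the Pl\"ucker map to these generators. The length $N=|\cP|$ is then the number of generators of $Q(2n,q)$, which a standard recursion on the Witt index evaluates to $\prod_{i=1}^{n}(q^i+1)$; substituting $n=2$ and $n=3$ gives $(q^2+1)(q+1)$ and $(q^3+1)(q^2+1)(q+1)$, with no dependence on the parity of $q$.

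For the dimension $K=\dim\langle\bar\vep^{gr}_n(\cP)\rangle\subseteq\bigwedge^n V$, I would work with the dual description: a linear functional on $\bigwedge^n V$ is an alternating $n$-form $f$, and it induces the zero codeword exactly when $f$ vanishes on every generator. Thus $\binom{2n+1}{n}-K$ is the dimension of the space of alternating $n$-forms vanishing on all generators. In characteristic $2$ the bilinear form $b_\eta(x,y)=\eta(x+y)+\eta(x)+\eta(y)$ is alternating and vanishes on every totally singular pair, so each form $b_\eta\wedge\omega$ with $\omega\in\bigwedge^{n-2}V^{*}$ vanishes on all generators; these span a $\binom{2n+1}{n-2}$--dimensional space of relations, and I would show it exhausts the kernel, giving $K=\binom{2n+1}{n}-\binom{2n+1}{n-2}$, i.e. $9$ and $28$. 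In odd characteristic $b_\eta$ is symmetric and this mechanism disappears: here I would argue that $\bigwedge^n V$ is an irreducible module for the orthogonal group, so the nonzero span must be the whole space, giving $K=\binom{2n+1}{n}$, i.e. $10$ and $35$.

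The minimum distance is the crux. By \eqref{min distance 1} together with the embedding dictionary, $d=|\cP|-\max_{f}\,|\{X\in\cP:\,f|_X=0\}|$, where $f$ ranges over the alternating $n$-forms inducing a nonzero codeword and $f|_X=0$ means that the top form $f$ restricts to zero on the $n$--dimensional generator $X$. The task is therefore to maximize, over all such $f$, the number of generators of $Q(2n,q)$ on which $f$ vanishes. The strategy is to classify the forms $f$ up to the action of the orthogonal group by their geometric position relative to $\eta$: for $n=2$ this reduces to the elementary--divisor type of the pencil spanned by $f$ and $b_\eta$, and for $n=3$ to the analogous, but considerably longer, orbit list of alternating trilinear forms on $V(7,q)$. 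For each orbit one counts the vanishing generators by a direct incidence argument; the extremal orbit yields the maximum, and subtracting from $|\cP|$ produces the stated $d$. The even--characteristic value $q^5(q-1)$ for $\cO_{3,3}$, distinct from the odd--characteristic $q^2(q-1)(q^3-1)$, reflects that the admissible form space is smaller and its extremal orbit different when $q$ is even.

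The main obstacle is exactly this orbit analysis of the relevant alternating forms relative to the quadric, together with the exact enumeration of vanishing generators within each orbit: it is case--heavy, must be carried out separately for $n=2$ and $n=3$, and is sensitive to the parity of $q$ (which governs both the form space and the structure of $Q(2n,q)$, in particular its nucleus when $q$ is even). By contrast, once the dual--polar and representation--theoretic setup is in place, the length and the dimension are comparatively routine.
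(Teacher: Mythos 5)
Note first that the paper you were given does \emph{not} prove this theorem: it is quoted verbatim from \cite{Cardinali2013}, so your attempt can only be measured against the proofs in that reference, not against anything internal to this survey.

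Your treatment of $N$ and $K$ is sound in outline and matches the known mechanism: $N$ is the number of generators of the parabolic quadric $Q(2n,q)$, namely $\prod_{i=1}^{n}(q^i+1)$, and the parity dichotomy for $K$ comes exactly from the forms $b_\eta\wedge\omega$, $\omega\in\bigwedge^{n-2}V^*$, which exist only in characteristic $2$. However, even here the two decisive facts are asserted rather than proved: (i) that these relations \emph{exhaust} the annihilator of the image when $q$ is even, and that wedging with $b_\eta$ is injective on $\bigwedge^{n-2}V^*$ (recall $b_\eta$ is degenerate in characteristic $2$ --- its radical is the nucleus --- so this is not automatic, although it holds since $b_\eta$ has rank $2n$); and (ii) that $\bigwedge^n V$ is an irreducible module for the orthogonal group when $q$ is odd. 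Point (ii) is a statement about \emph{modular} representations, and irreducibility of exterior powers in positive characteristic is genuinely delicate (it already fails for symplectic groups, and for Weyl modules it depends on the characteristic); it does hold in these two cases, but quoting the characteristic-zero fact is not a proof, so this step needs either a Jantzen-type computation or a precise citation.

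The genuine gap is the minimum distance, which is the actual content of the theorem. Your proposal reduces $d$ to: classify the alternating $n$-forms on $V(2n+1,q)$ up to the action of the orthogonal group, count the vanishing generators orbit by orbit, and take the maximum. None of this is carried out --- no orbit list, no counts, not even the extremal orbit is identified --- so the stated values $q^2(q-1)$, $q^2(q-1)(q^3-1)$ and $q^5(q-1)$ are never derived. Worse, for $n=3$ the classification of alternating trilinear forms on a $7$-dimensional space up to $O_7(q)$-equivalence is a substantially harder problem than the theorem itself (even the $\GL_7(q)$-classification is nontrivial), so the plan is not realistically completable as written. The proofs in \cite{Cardinali2013} do not proceed this way: they analyze hyperplane sections of the embedded dual polar space directly by geometric and combinatorial arguments, with separate treatments for odd and even $q$; for instance, for $n=2$ the extremal sections turn out to be the $2q^2+q+1$ generators of $Q(4,q)$ meeting a tangent plane that cuts the quadric in a pair of lines, whence $d=(q^2+1)(q+1)-(2q^2+q+1)=q^2(q-1)$. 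A more feasible variant of your strategy would be to invoke the known classification of geometric hyperplanes of the dual polar spaces $DQ(4,q)$ and $DQ(6,q)$ arising from the Grassmann embedding and maximize their sizes, rather than classifying forms; but as it stands the core of the theorem is left unproven.
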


  \begin{theorem}[\cite{Cardinali2016a,Cardinali2018c}]
	%  For $q$ odd,
	The code
	$\cC(\cO_{n,2})$ is a $[N,K,d_{min}]$-code  with
	\[ N=\frac{(q^{2n}-1)(q^{2n-2}-1)}{(q-1)(q^2-1)},\,\,\,\, K=\begin{cases} {(2n+1)n} & \text{ if $q$ odd } \\
		{(2n+1)n-1} & \text{ if $q$ even }
	\end{cases}
	\]
	\[{d=q^{4n-5}-q^{3n-4}}. \]
        If $n\neq3$, then the
        second smallest weight is $q^{4n-5}-q^{2n-3}$.
\end{theorem}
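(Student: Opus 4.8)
The plan is to establish the three parameters $N$, $K$ and the two smallest weights by separate arguments, exploiting throughout the geometric dictionary of Section~\ref{Sec Point line geometries}. A codeword of $\cC(\cO_{n,2})$ is the restriction to $\bar{\vep}^{gr}_2(\bar{\cP})$ of a functional on $\bigwedge^2V$, hence of an alternating bilinear form $\psi$ on $V$; its component at a point $p=\langle u,v\rangle$ is $\psi(u,v)$, which vanishes exactly when $\psi$ is identically zero on the totally singular line $\langle u,v\rangle$. Thus the weight of the codeword attached to $\psi$ equals $N$ minus the number of $\eta$-totally-singular lines annihilated by $\psi$, and computing the weight spectrum reduces to the purely geometric count: for each alternating $\psi$, how many totally singular lines $\langle u,v\rangle$ satisfy $\psi(u,v)=0$.

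First I would compute the length $N$, which is the number of $2$-dimensional totally singular subspaces of $(V,\eta)$. Counting incident (point, line) flags of the rank-$n$ parabolic polar space two ways, or invoking the standard recursive formula for totally isotropic $k$-subspaces, gives $N=\frac{(q^{2n}-1)(q^{2n-2}-1)}{(q-1)(q^2-1)}$. Next, the dimension equals $\dim\langle\bar{\vep}^{gr}_2(\bar{\cP})\rangle$, equivalently $\binom{2n+1}{2}=(2n+1)n$ minus the dimension of the space of alternating forms that vanish on \emph{every} totally singular line (the forms inducing the zero codeword). For $q$ odd the associated bilinear form $b_\eta(x,y)=\eta(x+y)-\eta(x)-\eta(y)$ is symmetric and non-degenerate; a polarization argument shows that an alternating form vanishing on all totally singular lines must be a scalar multiple of $b_\eta$, hence zero, so $K=(2n+1)n$. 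For $q$ even $b_\eta$ is itself alternating and vanishes on every totally singular line (each such line is $b_\eta$-isotropic), so it induces the zero codeword; a rank argument shows it spans the whole kernel, dropping the dimension by exactly one to $K=(2n+1)n-1$.

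For the minimum distance and the second weight — the genuinely hard part — I would reduce to a classification of the orbits of $\psi$ under $O(\eta)$, on which the weight is constant. Over $q$ odd, pairing $\psi$ against the non-degenerate $b_\eta$ yields an operator $\phi:=b_\eta^{-1}\psi$ satisfying $b_\eta(\phi x,y)=\psi(x,y)$ and $b_\eta(\phi x,y)=-b_\eta(x,\phi y)$, i.e.\ $\phi$ is $b_\eta$-skew-symmetric, exactly mirroring the operator $M^{-1}S$ used in the symplectic analysis attributed to~\cite{Cardinali2016b}. The orbit of $\psi$ is then governed by the conjugacy type of $\phi$, so it suffices to count, for each type, the totally singular lines $\langle u,v\rangle$ with $b_\eta(\phi u,v)=0$. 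I would stratify these orbits by the invariant subspaces of $\phi$ (its eigenspaces and Jordan blocks together with their Witt type relative to $\eta$), express the line count on each stratum through Gaussian binomials over the induced polar subspaces, and isolate the two strata maximising the count: the largest value yields $d=q^{4n-5}-q^{3n-4}$ and the next yields the second weight $q^{4n-5}-q^{2n-3}$.

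The main obstacle is precisely this orbit-by-orbit enumeration. The number of totally singular lines annihilated by $\psi$ is sensitive both to the rank of $\psi$ and to the Witt index of its restriction to the singular subspaces, and each stratum requires a careful count over a smaller polar geometry. The even-characteristic case demands separate bookkeeping, since there $b_\eta$ is degenerate (its radical is the nucleus) and the correspondence $\psi\mapsto\phi$ must instead be set up on the symplectic quotient $V/\langle\text{nucleus}\rangle$. Finally, the sporadic failure of the second-weight formula at $n=3$ reflects a numerical coincidence among the low-rank strata, so I would dispose of $n=3$ by an explicit computation and prove the stated second weight for all $n\neq 3$ through the generic stratified count.
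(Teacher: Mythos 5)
A point of reference first: the paper you were given does not prove this theorem at all --- it is quoted as a known result, with the odd-characteristic case taken from~\cite{Cardinali2016a}, the even-characteristic case from~\cite{Cardinali2018c}, and the dimension $K$ going back to~\cite{Cardinali2013}. So the only meaningful comparison is between your proposal and those external proofs, and measured against what they actually establish, your proposal has a genuine gap. Your preliminary reductions are sound and do match the literature: the identification of codewords with alternating forms $\psi$ on $V$, the dictionary ``weight of the codeword of $\psi$ equals $N$ minus the number of totally singular lines on which $\psi$ vanishes'', the flag count giving $N$, the computation of $K$ via the kernel of the evaluation map (trivial for $q$ odd, spanned by the polarization $b_\eta$ for $q$ even), and the skew-adjoint operator $\phi=b_\eta^{-1}\psi$ are all exactly the right tools.

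The gap is that everything after this set-up --- which is the actual content of the theorem --- is deferred rather than proved. You reduce the minimum distance and the second weight to: classify the orbits of $b_\eta$-skew-adjoint operators under $O(\eta)$ by Jordan/Witt data, count the annihilated totally singular lines on each stratum, and ``isolate the two strata maximising the count''. That stratified enumeration \emph{is} the hard theorem. Nothing in the proposal exhibits an extremal form attaining weight $q^{4n-5}-q^{3n-4}$, bounds the counts on the remaining strata (whose number grows with $n$), or explains why the maximum number of annihilated singular lines occurs where you claim; the classification of skew-adjoint operators over $\FF_q$ together with a line count per invariant-subspace configuration is a substantial piece of work, which is precisely why~\cite{Cardinali2016a} is a paper in its own right and why even characteristic required the separate paper~\cite{Cardinali2018c}, where the degeneracy of $b_\eta$ forces a reduction through the nucleus to the symplectic line-Grassmannian --- a step you name but do not carry out, and which is not a routine transcription of the odd case. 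Likewise, the exceptional behaviour of the second weight at $n=3$ is attributed to ``a numerical coincidence'' and dispatched by an unspecified ``explicit computation''. As it stands, the proposal is a correct framing of the problem plus a statement of intent: the two formulas $d=q^{4n-5}-q^{3n-4}$ and $q^{4n-5}-q^{2n-3}$ are never derived.
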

%In general, the second smallest weight for $n=3$ is an open problem.

The minimum weight codewords for line orthogonal Grassmann codes
can be characterized as follows. For a given reference system of $V$, suppose that the quadratic form $\eta$ can be written, with $x=(x_i)_{i=1}^n$, as
%Suppose the quadratic form associated to the polar space to
%be
\[ \eta(x):=\sum_{i=1}^n x_{2i-1}x_{2i}+x_{2n+1}^2. \]

Denote by $\cQ$ the parabolic quadric of $\PG(2n,q)$ defined by $\eta.$
\begin{theorem}\cite{Cardinali2016a}
  For $q$ odd and $n>2$
  as well as for $q$ even,
  the codewords of minimum weight all lie on the
  orbit of $e^{1}\wedge e^{2n+1}$ under the action of the orthogonal
  group.
  They all correspond to degenerate alternating
  forms   with radical of vector dimension $2n-1$
  meeting the quadric $\cQ$ in a cone of vertex a point $P$
  projecting a hyperbolic quadric $\cQ^+(2n-3,q)$ of rank $n-1$.

  For $q$ odd and $n=2$ the codewords of minimum weight lie
   either in the orbit of $e^1\wedge e^5$ or in the orbit of
  $e^1\wedge e^2+e^3\wedge e^4$

  % For $q$ even the minimum weight codewords correspond to
  % bilinear alternating forms with radical of dimension
  % $2n-1$ such that $\Rad(\varphi)\cap\cQ$ is a cone of vertex
  % a point $P$ projecting an hyperbolic quadric $Q^+(2n-3,q)$.
  % They all lie in the same orbit under the action of the
  % orthogonal group.
\end{theorem}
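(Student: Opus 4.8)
The plan is to translate the weight of a codeword into a purely geometric count on the parabolic quadric $\cQ=Q(2n,q)$ and then to optimise that count. As recalled for $k=2$, a codeword of $\cC(\cO_{n,2})$ is determined by an alternating bilinear form $\psi$ on $V$ (an element of $(\bigwedge^2V)^*\cong\bigwedge^2V^*$), and the coordinate indexed by a totally singular line $\ell=\langle u,v\rangle$ of $\cQ$ is $\psi(u,v)$ up to a nonzero scalar. Hence the weight of the associated codeword equals the number of lines of $\cQ$ on which $\psi$ does not vanish identically, so minimising the weight is the same as maximising the number $\nu(\psi)$ of totally singular lines of $\cQ$ that are also $\psi$-isotropic. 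The whole statement thus reduces to determining the forms $\psi\neq 0$ maximising $\nu(\psi)$ and describing them geometrically.

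First I would pass to orbit representatives. The orthogonal group $O(2n+1,q)$ acts on the set of alternating forms and leaves $\nu$, hence the weight, invariant, so it suffices to evaluate $\nu$ on one $\psi$ per orbit. Exactly as in the symplectic characterisation stated above, the orbits are governed by the canonical form of $M^{-1}S$, where $M$ is the bilinearised matrix of $\eta$ and $S$ the matrix of $\psi$; for $q$ odd $M$ is invertible and this is a genuine conjugacy invariant, whereas for $q$ even $M$ is itself alternating and singular, the quadric acquires a nucleus, and one must use a modified invariant. This is already the point where the $q$ odd and $q$ even statements diverge.

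The heart of the argument is the rank-$2$ case. If $\psi$ has rank $2$ then $R:=\Rad(\psi)$ has vector dimension $2n-1$, so $\PG(R)$ is a codimension-two subspace of $\PG(2n,q)$, and a direct computation in $V/R\cong\FF_q^2$ shows that a line $\ell$ is $\psi$-isotropic exactly when $\ell\cap R\neq 0$, i.e.\ when $\ell$ meets $\PG(R)$. Thus $\nu(\psi)$ is the number of lines of $\cQ$ meeting $\PG(R)$, a quantity controlled entirely by the quadric section $\cQ\cap\PG(R)$ in $\PG(2n-2,q)$. Running through the possible types of this section and counting the lines of $\cQ$ meeting it, one finds that $\nu(\psi)$ is maximised precisely when $\cQ\cap\PG(R)$ is a cone with vertex a single point $P$ over a hyperbolic quadric $\cQ^+(2n-3,q)$ of rank $n-1$; this pins down the geometric description and shows that all extremal rank-$2$ forms constitute a single $O(2n+1,q)$-orbit, represented by $e^1\wedge e^{2n+1}$.

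It remains to exclude the forms of rank $\geq 4$, and this is where I expect the \emph{main} difficulty. For higher rank the clean criterion ``$\psi$-isotropic $\iff$ meets the radical'' fails, and $\nu(\psi)$ must instead be estimated through the interaction of the two polarities attached to $\eta$ and to $\psi$ on $V/R$; the required counts of totally singular lines common to two quadrics, as functions of the rank and canonical type of $M^{-1}S$, are the genuinely technical part. One shows these counts stay strictly below the rank-$2$ optimum, so the minimum weight is attained only by the rank-$2$ orbit above, with the single exception $n=2$, $q$ odd: there $\dim V=5$ is too small for the general estimate to separate the orbits, and a rank-$4$ form (represented by $e^1\wedge e^2+e^3\wedge e^4$) ties with the rank-$2$ form $e^1\wedge e^5$, which accounts for the two orbits in the final assertion.
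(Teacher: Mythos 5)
The theorem you are addressing is stated in this survey without proof: it is quoted from~\cite{Cardinali2016a} (which handles $q$ odd) together with~\cite{Cardinali2018c} for even characteristic, so the comparison must be with those references. Your skeleton does match their strategy: codewords correspond to alternating forms $\psi$, the weight is the number of totally singular lines of $\cQ$ on which $\psi$ does not vanish, minimizing the weight means maximizing the number of $\psi$-isotropic singular lines, and for rank-$2$ forms the criterion ``$\ell$ is $\psi$-isotropic if and only if $\ell$ meets $\Rad(\psi)$'' (which you verify correctly) reduces everything to the type of the quadric section $\cQ\cap\PG(\Rad(\psi))$, optimized by the point-cone over $\cQ^+(2n-3,q)$. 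You also correctly identify the exceptional tie between the rank-$2$ and rank-$4$ orbits when $n=2$ and $q$ is odd.

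There are, however, two genuine gaps. First, the exclusion of forms of rank at least $4$ --- which you flag as the main difficulty --- is exactly where essentially all of the work of~\cite{Cardinali2016a} lies; writing ``one shows these counts stay strictly below the rank-$2$ optimum'' leaves the quantitative core of the theorem unproven. In the reference this requires, for each rank and canonical type of $\psi$, an explicit upper bound on the number of $\psi$-isotropic totally singular lines, obtained by analyzing the interaction of the polarity of $\eta$ with the (degenerate) symplectic polarity of $\psi$ point by point on $\cQ$; no shortcut is known, and without these estimates the characterization of the minimum-weight orbits does not follow. Second, your claim that $q$ even only needs ``a modified invariant'' misses a structural difference: in characteristic $2$ the polarization $b$ of $\eta$ is itself an alternating form vanishing on every totally singular line, so the Grassmann embedding of $\cO_{n,2}$ spans only a hyperplane of $\PG(\bigwedge^2V)$ (this is why $K=(2n+1)n-1$ for $q$ even in the table above), and codewords correspond to alternating forms only modulo the line $\langle b\rangle$. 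The weight is then an invariant of the coset $\psi+\langle b\rangle$ rather than of $\psi$, the orbit analysis must be carried out on these cosets, and the entire minimum-weight argument has to be redone --- which is precisely why the even-characteristic case required the separate paper~\cite{Cardinali2018c}.
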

Let now $V:=V(2n,q)$ and suppose that $\eta^+$ is a given non-degenerate quadratic form of $V$
with Witt index $n.$ The set of singular points for $\eta^+$
is a hyperbolic quadric $\cQ^+(2n-1,q)$ of rank $n.$
For $k=1,\dots, n$ denote by $\cO^+_{n,k}$ the orthogonal grassmannian defined by $\eta^+.$
We point out that in the cases considered here the Grassmann map is not a projective
embedding.

The known results on the orthogonal Grassmann code $\cC(\cO^+_{n,k})$ defined by $\eta^+$
 is the following.
%result by~\cite{Pinero2025}.
\begin{theorem}[\cite{Pinero2025}]
  %Let $\cO^+_{n,k}$ be the image under the Grassmann
  %map of the totally singular subspaces of vector dimension $k$
  %contained in the hyperbolic quadric $\cQ^+(2n-1,q)$.
  The code $\cC(\cO^+_{3,3})$ has  parameters
\[N= 2(q+1)(q^2+1)(q^3+1);\,\,\,\, K=\begin{cases}
	20 & \text{ if $q$ is odd } \\
	14 & \text{ if $q$ is even }
	\end{cases};\,\, d=\begin{cases}
	q^3-q^2 & \text{ if $q$ is odd } \\
	q^3 & \text{ if $q$ is even }.
\end{cases}\]
	
%\[ d=\begin{cases}
%  q^3-q^2 & \text{ if $q$ is odd } \\
%  q^3 & \text{ if $q$ is even }.
%\end{cases} \]
The code $\cC(\cO^+_{4,4})$ for $q$ even has parameters
\[N= 2(q+1)(q^2+1)(q^3+1)(q^4+1);\,\,\, K=42;\,\,\, d=q^6.\]
\end{theorem}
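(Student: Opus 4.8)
The plan is to determine the three parameters $N$, $K$ and $d$ of $\cC(\cO^+_{n,n})$ independently, using the general description of a projective code $\cC(\Omega)$ with $\Omega=\bar{\vep}^{gr}_n(\bar{\cP})$ the set of Pl\"ucker images of the generators of $\cQ^+$. One should keep in mind, as noted just above the statement, that here the Grassmann map is \emph{not} a projective embedding: a line of the dual polar space $\cO^+_{n,n}$ contains only two generators (the residue of a totally singular $(n-1)$-space is a hyperbolic $\cQ^+(1,q)$), so it cannot be mapped \emph{onto} a projective line. Consequently Theorem~\ref{main thm} and Corollary~\ref{main cor} give no information, and the minimum distance must be extracted directly from the hyperplane-section formula~\eqref{min distance 1} rather than inferred from a connectivity argument.

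The length is obtained by the standard enumeration of the generators of a finite hyperbolic quadric, which I would carry out by the usual recursion on the rank, peeling off one hyperbolic pair at a time. For the dimension $K=\dim\langle\Omega\rangle$ the point is to compute the span, inside $\bigwedge^n V$, of the decomposable vectors $v_1\wedge\dots\wedge v_n$ indexed by the totally singular generators $\langle v_1,\dots,v_n\rangle$. I would identify this span as the submodule of $\bigwedge^n V$ generated under the orthogonal group and compute its dimension according to the characteristic. In odd characteristic no linear relation is forced on these images and the span attains the larger value (for $\cO^+_{3,3}$ this is the whole of $\bigwedge^3 V$, of dimension $20$). In even characteristic the bilinear form attached to $\eta^+$ is alternating, the orthogonal group is contained in the corresponding symplectic group, and the Pl\"ucker points of the generators satisfy exactly the same linear relations as in the symplectic case; these relations cut out the kernel of the contraction map $\bigwedge^n V\to\bigwedge^{n-2}V$ built from the form, which pins $K$ down to the symplectic value $\binom{2n}{n}-\binom{2n}{n-2}$. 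This reproduces $20/14$ for $\cO^+_{3,3}$ and $42$ for $\cO^+_{4,4}$.

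The minimum distance is the main obstacle. By~\eqref{min distance 1} one needs the maximal number of generators lying on a single hyperplane of $\PG(\langle\Omega\rangle)$, i.e. the largest geometric hyperplane of $\cO^+_{n,n}$ arising from the Pl\"ucker map. A codeword corresponds to a linear form on $\langle\Omega\rangle$, hence, modulo the relations above, to an alternating $n$-form on $V$, and its weight is $N$ minus the number of generators annihilated by that form. I would first describe the geometric hyperplanes of the dual polar space---the singular hyperplanes $p^\perp$ and the quadratic/degenerate ones---computing the size of each through the residual polar space it induces, and then isolate the extremal configuration. The split between $q$ odd and $q$ even is expected precisely here, because the type of the largest hyperplane changes with the characteristic, mirroring the drop in $K$. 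For the two small ranks in the statement I would make the count fully explicit by invoking the exceptional isomorphisms: the Klein correspondence identifies the generators in the $\cO^+_{3,3}$ case with the points and planes of an underlying $\PG(3,q)$, while triality organises the three systems of maximal singular subspaces in the $\cO^+_{4,4}$ case; in both situations the automorphism group acts with few orbits on geometric hyperplanes, so one can list the possible weights and read off the minimum. The genuinely delicate step, where I expect the real work to concentrate, is to prove that \emph{no} hyperplane section exceeds the candidate maximum: this amounts to controlling the whole family of geometric hyperplanes at once, rather than exhibiting a single favourable example, and it is here that a transitivity or averaging argument---or, for fixed small $q$, a verified computation together with a uniformity argument in $q$---would be required.
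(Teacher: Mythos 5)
First, be aware that the paper itself contains no proof of this statement: it sits in the survey portion of Section~\ref{emb-codes} and is attributed entirely to \cite{Pinero2025}, so there is no internal argument to compare yours against --- what you are attempting is a reconstruction of that cited paper. Judged on its own terms, your proposal is a programme rather than a proof, and the gap is exactly where you place it yourself: the determination of the maximal hyperplane section of the Pl\"ucker image of the generators, i.e.\ the value of $d$, is not a ``delicate last step'' --- it \emph{is} the theorem. Everything you actually carry out (length, dimension, the observation that the map is not a projective embedding) is routine and does not touch $d$, while the mechanisms you gesture at are not arguments: ``few orbits on geometric hyperplanes'' is false as stated (already for $\cO^+_{3,3}$ with $q$ odd the code's automorphism group has a number of orbits on codewords growing with $q$; the weight \emph{spectrum} is small only because weights depend on coarser invariants, which is itself something to prove), and ``averaging, or computation for small $q$ plus uniformity in $q$'' is a hope, not a method. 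Your characteristic-$2$ dimension claim is also only half-proved: containment of the span in the $\binom{2n}{n}-\binom{2n}{n-2}$-dimensional span of the symplectic Lagrangian Grassmannian is immediate (the polar form of $\eta^+$ is alternating in characteristic $2$), but the asserted equality --- that the hyperbolic generators, a \emph{proper} subset of the symplectic Lagrangians, span that whole space --- is stated, not shown.

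Second, there is a concrete numerical discrepancy that executing your own first step would have surfaced. The ``standard enumeration'' you invoke gives $\prod_{i=0}^{n-1}(q^i+1)$ generators for $\cQ^+(2n-1,q)$, i.e.\ $N=2(q+1)(q^2+1)$ for $\cO^+_{3,3}$ and $N=2(q+1)(q^2+1)(q^3+1)$ for $\cO^+_{4,4}$; the values of $N$ printed in the statement each contain one extra factor (they are the generator counts of $\cQ^+(7,q)$ and $\cQ^+(9,q)$). The printed dimensions, by contrast, are consistent with the geometries as defined in the paper: $20=\binom{6}{3}$ and $14=\binom{6}{3}-\binom{6}{1}$ pin the ambient space to $\bigwedge^3V(6,q)$, and $42=\binom{8}{4}-\binom{8}{2}$ pins it to $\bigwedge^4V(8,q)$. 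So the statement as printed is internally inconsistent (the $N$'s are transcription errors in the survey), and a proof attempt that planned to ``reproduce'' both numbers cannot succeed as written. On the positive side, your structural remarks are correct: lines of the dual polar space $\cO^+_{n,n}$ carry only two points, so the Grassmann map is not a projective embedding and Theorem~\ref{main thm} and Corollary~\ref{main cor} are indeed unavailable; and the Klein correspondence is genuinely the right tool for $\cO^+_{3,3}$ in odd characteristic, where it splits $\bigwedge^3V(6,q)$ into two complementary $10$-dimensional modules on which the two families of generators evaluate as two independent copies of the quadric Veronese (order-two projective Reed--Muller) code of $\PG(3,q)$, yielding $d=q^3-q^2$ almost immediately. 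But you never carry this out, and precisely in the hard cases --- $q$ even, where that splitting collapses and $K$ drops to $14$, and the whole of $\cO^+_{4,4}$ --- the outline produces nothing concrete.
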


\subsubsection{\sc Hermitian Grassmann codes}
Let $V$ be a $m$-dimensional vector space defined over a finite field  of order $q^2.$ Suppose that $V$
is equipped with a non-degenerate hermitian form $\rho$ of Witt index $n$ (hence either $m=2n+1$ or $m=2n$).

For $k=1,\dots,n$, if $m$ is even denote by $\cH_{n,k}^{even}$ the hermitian $k$-grassmannian induced by $\rho$ and if $m$ is odd denote by $\cH_{n,k}^{odd}$ the hermitian $k$-grassmannian induced by $\rho.$ Accordingly, the associated codes arising from the image under the  Pl\"ucker embedding $\vep_{n,k}$ of a hermitian polar grassmannian are denoted by $\cC(\cH_{n,k}^{even})$ and $\cC(\cH_{n,k}^{odd}).$

Note that if $k=2$ and $n>2$ then ${}{\vep_{n,2}}$ maps lines of a line hermitian grassmannian onto projective lines of $\PG(\bigwedge^2 V)$, independently from the parity of $\dim(V)$, i.e. the embedding is
projective. Otherwise, if $n=k=2$ and $m=\dim(V)=5$ then the lines of $\cH_{2,2}^{odd}$ are mapped onto hermitian curves, while if $m=\dim(V)=4$ then lines of $\cH_{2,2}^{even}$ are mapped onto Baer sublines of $\PG(\bigwedge^2 V)$. In the latter case $\vep_{2,2}(\cH_{2,2}^{odd})\cong Q^-(5,q)$ is
contained in a proper subgeometry of $\PG(\bigwedge^2V)$ defined over $\FF_q$.
%We observe that for $k=2$, $\dim(V)=4$ or $\dim(V)>5$
%the embeddings $\vep_{n,2}$ are always transparent; see
%\cite{ILP17}.

The known results on the main parameters of a hermitian $[N,K, d]_q$-Grassmann code are the following.

%We will call {\it Polar hermitian Grassmann code} the projective code arising from vector representatives of the elements of $\vep_{n,k}(\cH_{n,k})$, as explained at
%the beginning of the Introduction.

%By~\cite{Cardinali2018} and~\cite[Theorem 2.1]{Kasikova} we have
For $1\leq k\leq n$,
the parameters of $\cC({\cH}_{n,k}^{odd})$ and  $\cC({\cH}_{n,k}^{even})$ are
\[N=\frac{\prod_{i={m+1-2k}}^{m}(q^i-(-1)^i)}{\prod_{i=1}^k(q^{2i}-1)},
\qquad K= \binom{m}{k}, \]
where $m=2n$ for $\cC(\cH_{n,k}^{even})$ and $m=2n+1$ for
$\cC(\cH_{n,k}^{odd})$; see~\cite[Theorem 2.19]{GGG} for the value
of $N$.
The following results are from~\cite{Cardinali2018}.
 \begin{theorem}
   \label{main-herm1}
   Suppose $n\geq2$.% and put $m=2n$.
   A line hermitian  Grassmann code $\cC(\cH_{n,2}^{even})$  has parameters
   % + -> m is even
   \[ N=\frac{(q^{2n}-1)(q^{{2n}-1}+1)(q^{{2n}-2}-1)(q^{{2n}-3}+1)}{(q^{2}-1)^2(q^{2}+1)};\]
   \[K={2n\choose 2};\,\,\, d=\begin{cases}\displaystyle
    q^{8n-12}-q^{4n-6} & \text{ if ${n}=2,3$ } \\
    q^{{8n}-12} & \text{ if ${n}\geq 4$} \\
%    q^{4m-12}-q^{3m-9} & \text{ if $m$ is odd.}
  \end{cases}.\]
\end{theorem}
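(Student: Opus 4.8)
The plan is to establish the three parameters separately, the minimum distance being the substantial part.

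The length and the dimension are essentially immediate from results already quoted. The length is $|\bar\cP|$, the number of totally isotropic $2$-subspaces of the $2n$-dimensional Hermitian space $(V,\rho)$; substituting $m=2n$, $k=2$ into the value of $N$ cited from \cite[Theorem 2.19]{GGG} and using $(-1)^{2n}=1$ — so that the factors for $i=2n-3,\dots,2n$ become $q^{2n-3}+1$, $q^{2n-2}-1$, $q^{2n-1}+1$, $q^{2n}-1$ — together with $\prod_{i=1}^2(q^{2i}-1)=(q^2-1)^2(q^2+1)$, reproduces the displayed $N$. For the dimension, since $\bar{\vep}^{gr}_2$ is a projective embedding for $n>2$ (the case $n=2$ being addressed below), $K$ equals the dimension of the embedding, i.e.\ $\dim\langle\bar{\vep}^{gr}_2(\bar\cP)\rangle$. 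The key point is that, in contrast with the symplectic case, the Hermitian form is sesquilinear and hence induces \emph{no} linear functional on $\bigwedge^2V$ vanishing on all decomposable isotropic bivectors; thus these bivectors span the whole of $\bigwedge^2V$ and $K=\binom{2n}{2}$, matching the general formula $K=\binom{m}{k}$.

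The minimum distance is where the real work lies. By~\eqref{min distance 1} we have $d=N-M$, with $M$ the largest number of image points on a hyperplane of $\PG(\bigwedge^2V)$. Under the identification $(\bigwedge^2V)^*\cong\{\text{alternating bilinear forms on }V\}$, each nonzero codeword is $c_\psi$ for an alternating form $\psi$, and its zeros are exactly the $2$-subspaces $X$ that are totally isotropic for \emph{both} $\rho$ and $\psi$. Hence
\[ M=\max_{\psi\neq0}\#\{X:\dim X=2,\ \rho|_X=0,\ \psi|_X=0\}. \]
To evaluate this I would attach to the pair $(\rho,\psi)$ the $\phi$-semilinear operator obtained by composing $\psi\colon V\to V^*$ with the inverse of the semilinear isomorphism $V\to V^*$ induced by $\rho$ — the semilinear analogue of the operator $M^{-1}S$ used to describe the minimum weight codewords of the symplectic code $\cC(\cS_{n,2})$ above. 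Decomposing $V$ into indecomposable summands for this operator classifies $\psi$ up to the action of the unitary group, and for each class the number of common totally isotropic $2$-subspaces can be written as a sum of Gaussian-binomial counts governed by the Hermitian type of the radical and of the invariant subspaces of the operator.

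The main obstacle is precisely this classification-and-count step, together with singling out the class that maximises the zero-count. I expect two competing extremal configurations, one of which carries an extra $q^{4n-6}$ common isotropic lines only while the ambient rank is small; comparing the two, with the crossover occurring between $n=3$ and $n=4$, should give $M=N-q^{8n-12}+q^{4n-6}$ and hence $d=q^{8n-12}-q^{4n-6}$ for $n=2,3$, whereas for $n\ge4$ the other configuration dominates, giving $M=N-q^{8n-12}$ and $d=q^{8n-12}$. Finally, the case $n=2$ needs separate care, since there $\dim V=4$ and $\bar{\vep}^{gr}_2$ maps the lines of $\cH_{2,2}^{even}$ onto Baer sublines rather than full projective lines; nonetheless the code remains well defined as the projective system $\bar{\vep}^{gr}_2(\bar\cP)$, the identity $d=N-M$ still holds, and one checks that the extremal count survives unchanged in this degenerate embedding.
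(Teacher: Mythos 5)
Your framework is the correct one, and it matches the approach of the source from which this theorem is actually quoted (the paper itself gives no proof of Theorem~\ref{main-herm1}: it is cited from~\cite{Cardinali2018}): codewords of $\cC(\cH_{n,2}^{even})$ correspond to alternating bilinear forms $\psi$ on $V$, the zero coordinates of the codeword attached to $\psi$ are exactly the $2$-subspaces totally isotropic for both $\rho$ and $\psi$, so $d=N-M$ where $M$ is the maximal number of common totally isotropic $2$-spaces, and the natural tool is the semilinear operator obtained from $\rho^{-1}\psi$ together with the action of the unitary group. Your derivation of $N$ from~\cite[Theorem 2.19]{GGG} is also fine.

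The genuine gap is that the entire substance of the theorem --- the value of $d$, including the crossover between $n=3$ and $n\geq 4$ --- is asserted rather than proved. The sentence ``I expect two competing extremal configurations\dots should give $M=N-q^{8n-12}+q^{4n-6}$'' is precisely what has to be established: one must (i) classify, up to unitary equivalence, the relevant pairs $(\rho,\psi)$ (equivalently the radical/invariant-subspace structure of the associated semilinear operator), (ii) count the common totally isotropic $2$-spaces in each class, and (iii) identify which class attains the maximum. None of these steps is carried out, and they are delicate: the counts depend on the Hermitian type of $[\Rad(\psi)]$ and its section with the Hermitian variety, and the maximizing class switches exactly at $n=4$, a phenomenon with no counterpart in the symplectic case, where the answer $q^{4n-5}-q^{2n-3}$ has a uniform shape; so the result cannot be inferred by analogy. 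A secondary gap: your dimension argument is a non sequitur. The fact that $\rho$, being sesquilinear, does not itself induce a linear functional on $\bigwedge^2V$ vanishing on the isotropic decomposable bivectors does not exclude that some \emph{other} alternating form vanishes on all totally isotropic $2$-spaces; the equality $K=\binom{2n}{2}$ is exactly the statement that no such nonzero form exists, so it follows from (and essentially requires) the weight analysis you skipped, or from the known generating-rank results for Hermitian Grassmannians. Similarly, ``one checks that the extremal count survives'' in the case $n=2$, where the image lies in a Baer subgeometry and is isomorphic to $Q^-(5,q)$, is left unchecked.
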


\begin{theorem}
   \label{main-herm}
   A line hermitian  Grassmann code $\cC(\cH_{n,2}^{odd})$  has parameters
   % - -> m is odd
   \[ N=\frac{(q^{2n+1}+1)(q^{2n}-1)(q^{2n-1}+1)(q^{2n-2}-1)}{(q^{2}-1)^2(q^{2}+1)};\]
   \[K={2n+1\choose 2};\,\,\,%\begin{cases}\displaystyle
%    q^{4m-12}-q^{2m-6} & \text{ if $m=4,6$ } \\
%    q^{4m-12} & \text{ if $m\geq 8$ is even} \\
    d=q^{8n-8}-q^{6n-6}. %& \text{ if $m$ is odd.}
%  \end{cases}.
\]
\end{theorem}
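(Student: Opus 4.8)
The plan is to reduce all three parameters to counting problems inside the Hermitian polar space, treating the codewords as alternating forms. First I would record that the length is $N=|\bar{\cP}|$, the number of $\rho$-totally isotropic $2$-spaces (the points of $\bar{\cP}$) of the non-degenerate Hermitian form $\rho$ on $V=V(2n+1,q^2)$; this is exactly the value given by the counting formula of \cite[Theorem 2.19]{GGG} with $m=2n+1$ and $k=2$, and a routine simplification of the Gaussian-type products yields the displayed expression. For the dimension I would invoke the \emph{fullness} of the Hermitian Grassmann embedding of lines, namely $\langle \bar{\vep}^{gr}_2(\bar{\cP})\rangle=\PG(\bigwedge^2 V)$, so that $K=\dim\bigwedge^2 V=\binom{2n+1}{2}$. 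Unlike the symplectic and orthogonal settings, where a contraction/trace relation cuts down the span, here the image spans the whole exterior square; with this in hand the assignment $\psi\mapsto c_\psi:=(\psi(u,v))_{[u\wedge v]\in\bar{\cP}}$ is a linear bijection, up to scalars, from the alternating bilinear forms on $V$ onto $\cC(\cH_{n,2}^{odd})$.

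The core of the argument is the minimum distance. Since a point $X=\langle u,v\rangle\in\bar{\cP}$ lies on the hyperplane $[\psi]$ precisely when $\psi(u,v)=0$, formula~\eqref{min distance 1} gives $\wt(c_\psi)=N-Z(\psi)$, where $Z(\psi)$ is the number of $\rho$-totally isotropic $2$-spaces that are also totally isotropic for $\psi$. Thus $d=N-\max_{\psi\neq 0}Z(\psi)$, and everything comes down to determining the alternating form sharing the largest possible number of totally isotropic $2$-spaces with $\rho$. To evaluate $\max Z(\psi)$ I would classify the alternating forms $\psi$ relative to $\rho$, up to the action of the unitary group $\mathrm{U}(2n+1,q^2)$, by the rank $2r$ of $\psi$ together with the isometry type of the restriction of $\rho$ to the radical $\Rad(\psi)$, and compute $Z(\psi)$ for each class. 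Low-rank forms vanish on the most $2$-spaces, so the maximizer should be a rank-$2$ form $\psi=\xi\wedge\eta$; for such a form $\psi|_X=0$ exactly when $X$ meets the codimension-$2$ subspace $W:=\ker\xi\cap\ker\eta$, whence $Z(\psi)$ equals the number of $\rho$-totally isotropic $2$-spaces meeting $W$, a quantity depending only on the isometry type of $\rho|_W$. Optimizing over the finitely many types of $W$ then produces $\max Z(\psi)$, and subtracting from $N$ is designed to return $d=q^{8n-8}-q^{6n-6}$; the same computation identifies the minimum-weight codewords as the rank-$2$ forms attached to the optimal type of $W$.

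The main obstacle is exactly this classification-and-optimization step. Over $\FF_{q^2}$ the interplay between the sesquilinear $\rho$ and the bilinear $\psi$ is more delicate than in the symplectic case: because of the Frobenius twist one cannot diagonalize a single operator $\rho^{-1}\psi$, so the eigenspace technique available for symplectic Grassmann codes must be replaced by a direct geometric count of common totally isotropic $2$-spaces. Proving that no form of rank larger than $2$---and no rank-$2$ form attached to a suboptimal $W$---can beat the extremal configuration requires a monotonicity argument in the rank together with sharp Gaussian-binomial counts of isotropic $2$-spaces in Hermitian subspaces of each type. Concretely, I would establish a uniform upper bound for $Z(\psi)$ valid for all nonzero $\psi$, match it with the explicit rank-$2$ construction, and thereby pin down both the minimum distance and the structure of the minimum-weight words; the borderline case $n=2$, where the embedding of lines is not projective, would be verified separately.
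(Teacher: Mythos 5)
A preliminary remark: the paper itself contains no proof of Theorem~\ref{main-herm}; it is a survey item quoted from~\cite{Cardinali2018} (with the value of $N$ attributed to~\cite[Theorem 2.19]{GGG}), so your proposal can only be measured against the strategy of that cited work. Your framework is the right one and is essentially the one used there: granting fullness of the line Grassmann embedding, codewords correspond to alternating bilinear forms $\psi$ on $V=V(2n+1,q^2)$, the weight of $c_\psi$ equals $N-Z(\psi)$ where $Z(\psi)$ counts the $\rho$-totally isotropic $2$-spaces on which $\psi$ vanishes, and $d=N-\max_{\psi\neq 0}Z(\psi)$; moreover your observation that for $\psi=\xi\wedge\eta$ one has $Z(\psi)$ equal to the number of totally isotropic $2$-spaces meeting $W=\ker\xi\cap\ker\eta$, a quantity depending only on the isometry type of $\rho|_W$, is correct. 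Note, however, that the fullness claim $\langle\bar{\vep}^{gr}_2(\bar{\cP})\rangle=\PG(\bigwedge^2V)$, which is what yields $K=\binom{2n+1}{2}$, is itself a non-trivial theorem that you invoke without proof or citation.

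The genuine gap is the minimum distance, which is the entire substance of the statement. Everything hinges on computing $\max_{\psi\neq 0}Z(\psi)$, and on this point your proposal offers only a heuristic (``low-rank forms vanish on the most $2$-spaces, so the maximizer should be a rank-$2$ form'') together with an announced but unexecuted ``monotonicity argument in the rank'' and ``uniform upper bound for $Z(\psi)$'': no bound is exhibited, no count for any isometry type of $\rho|_W$ is carried out, and the value $q^{8n-8}-q^{6n-6}$ is never actually derived. Worse, the rank heuristic is precisely the delicate point. As the paper records in the remark and the classification following Theorem~\ref{main-herm}, for $n=2$ the minimum weight of $\cC(\cH_{2,2}^{odd})$ is also attained by alternating forms $\theta$ with $\dim(\Rad(\theta))=1$, i.e.\ of rank $4$, and the published proof in~\cite{Cardinali2018} contained an error at exactly this spot (corrected only in an addendum). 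So a correct argument cannot dispose of higher-rank forms by monotonicity in the rank; it must bound $Z(\psi)$ separately for every radical dimension and every isometry type of $\rho$ restricted to the radical, with an equality analysis. You do flag $n=2$ for separate treatment, but for a different reason (non-projectivity of the embedding of lines in that case) rather than this one. As it stands, the proposal establishes $N$, reduces $K$ to an uncited known result, and leaves $d$ unproved.
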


\begin{remark}
We point out that the proof of the theorem determining the
minimum distance for line hermitian Grassmann codes presented in~\cite{Cardinali2018} contains an error in the case
of $\cH_{n,2}^{odd}$ for $n=2$ (i.e. in the notation of that
paper for vector dimension  $m=5$).
This mistake
does not change the result asserted by the main theorem
of the paper, but
affects the classification of the codewords of minimum
weight. A corrected version of the proof
is available as an \emph{addendum} to the preprint of
the original paper on \url{https://arxiv.org/abs/1706.10255v4}.
\end{remark}
\begin{theorem}
  For $n=2,3$ the minimum weight codewords of $\cC(\cH_{n,2}^{even})$
  correspond to bilinear alternating forms $\theta$ which are
  permutable with the given hermitian form $\rho$.
  For $n>3$,
  the minimum weight codewords of $\cC(\cH_{n,2}^{even})$
  correspond to bilinear alternating forms $\theta$
  with $\dim(Rad(\theta)) = 2n-2$ and such
  that $[Rad (\theta)]$ meets $\cH_{n,1}^{even}$ in a hermitian cone
  of the form
  $[\Pi_2]\cH_{n-2,1}^+$ where $[\Pi_2]$ is a projective plane.
\end{theorem}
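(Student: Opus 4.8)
The plan is to use the standard dictionary between codewords of $\cC(\cH_{n,2}^{even})$ and alternating bilinear forms on $V$. Writing a codeword as an element $\theta\in(\bigwedge^2V)^*$, hence as an alternating form $\theta(x,y)$, its value at the point $[v_1\wedge v_2]$ of $\vep_{n,2}(\cH_{n,2}^{even})$ is, up to a scalar, $\theta(v_1,v_2)$. Consequently the number of zeros of $\theta$ equals the number of $\rho$-totally isotropic $2$-spaces $W$ that are also $\theta$-isotropic, and the weight of $\theta$ is $N$ minus this count. Since the unitary group $U(2n,q^2)$ acts monomially on the code and transitively on its coordinates, the weight of $\theta$ is an invariant of its $U(2n,q^2)$-orbit; thus I would first reduce the whole problem to classifying these orbits by two data, namely the rank of $\theta$ and the mutual position of $\Rad(\theta)$ with respect to the Hermitian variety $\cH_{n,1}^{even}$.

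For the range $n>3$ I would argue that a form realizing the maximal number of zeros must have rank $2$. The key point is that for a rank-$2$ form $\theta=f\wedge g$ one has $\Rad(\theta)=\ker f\cap\ker g$, a subspace of codimension $2$, and a $2$-space $W$ is $\theta$-isotropic precisely when $W\cap\Rad(\theta)\neq 0$; hence the number of zeros equals the number of $\rho$-totally isotropic lines of $\PG(V)$ meeting $[\Rad(\theta)]$. This count is governed entirely by the isomorphism type of the restriction of $\rho$ to $\Rad(\theta)$, i.e.\ by the intersection $[\Rad(\theta)]\cap\cH_{n,1}^{even}$. I would then optimize over all intersection types and show, by comparing with the already established value $d=q^{8n-12}$, that the maximum is attained exactly when this intersection is the Hermitian cone $[\Pi_2]\cH_{n-2,1}^+$; forms of rank $>2$ are then ruled out by checking that they produce strictly fewer common isotropic $2$-spaces.

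For $n=2,3$ the minimum distance $q^{8n-12}-q^{4n-6}$ is strictly smaller than $q^{8n-12}$, so the rank-$2$ configuration above is \emph{not} optimal and a different family prevails; indeed here $[\Rad(\theta)]$ is too small to carry a genuine cone $[\Pi_2]\cH_{n-2,1}^+$, which is exactly what breaks the previous analysis. I would instead isolate the extremal forms through the joint geometry of $\rho$ and $\theta$: writing $H$ for the Hermitian Gram matrix of $\rho$ and $S$ for the alternating Gram matrix of $\theta$, the extremal forms are precisely those for which the polarity induced by $\theta$ commutes with the Hermitian polarity induced by $\rho$ (the \emph{permutable} forms), which is the Hermitian counterpart of the symplectic eigenspace condition recalled earlier for $\cC(\cS_{n,2})$. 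The strategy is to enumerate the finitely many $U(2n,q^2)$-orbits of permutable forms for $n=2,3$, compute the number of common totally isotropic $2$-spaces for each, and verify that the maximum equals $N-(q^{8n-12}-q^{4n-6})$ and is attained on exactly this family.

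The main obstacle is the exact enumeration and counting underlying both cases: classifying the pairs $(\rho,\theta)$ up to unitary equivalence and, for each orbit, evaluating the number of $2$-spaces totally isotropic for both forms. This classification is controlled by the pencil generated by $\rho$ and $\theta$ and is sensitive to the arithmetic of $\FF_{q^2}$ over $\FF_q$ (eigenvalue fields, parity of $q$), so the individual counts are delicate. The crossover at $n=3\to 4$, where the optimal family switches from permutable forms to rank-$2$ cone forms, reflects that the two competing counts cross between these values of $n$; establishing this inequality in the correct direction on each side of the threshold, together with uniqueness of the maximizer, is the technical heart of the argument.
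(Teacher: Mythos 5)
A preliminary remark on the comparison itself: the paper contains no proof of this theorem. It is quoted, as part of a survey of known parameters, from \cite{Cardinali2018}, so your proposal can only be measured against what a complete argument must supply. Your setup is the correct one and is indeed the standard framework of the cited source: codewords of $\cC(\cH_{n,2}^{even})$ are alternating bilinear forms $\theta$ on $V$; the weight of $\theta$ equals $N$ minus the number of $\rho$-totally isotropic $2$-spaces on which $\theta$ vanishes; weights are constant on orbits of the unitary group; and for $\rank(\theta)=2$ a totally isotropic $2$-space $W$ satisfies $\theta|_W=0$ if and only if $W\cap\Rad(\theta)\neq 0$, so the count for such forms depends only on the isometry type of $\rho|_{\Rad(\theta)}$. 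All of this is right.

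The genuine gap is that everything after this setup is deferred, and the one reduction you do assert is false. The claim that the problem reduces to classifying orbits by the two data $\bigl(\rank(\theta),\ \text{position of }\Rad(\theta)\bigr)$ fails as soon as $\rank(\theta)>2$: equivalence classes of pairs consisting of a Hermitian form and an alternating form carry finer (pencil/Kronecker-type) invariants, and the weight genuinely depends on them. Your own case $n=2,3$ refutes the reduction: a nondegenerate $\theta$ permutable with $\rho$ and a generic nondegenerate $\theta$ have the same rank and the same (trivial) radical, yet by the very statement being proved only the former attains the minimum weight --- exactly as in the symplectic analogue recalled in the paper, where the characterization involves the eigenspace structure of $M^{-1}S$, data invisible to rank and radical. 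Consequently the step ``forms of rank $>2$ are then ruled out by checking that they produce strictly fewer common isotropic $2$-spaces'' is not a check: it \emph{is} the theorem. Nothing in your proposal produces an upper bound on the number of zeros valid for an arbitrary alternating form of rank $>2$, which is the bulk of the work in \cite{Cardinali2018}; likewise, for $n=2,3$, ``enumerate the permutable orbits and compute'' is the entire content, and you must additionally prove the converse inclusion, namely that no non-permutable form reaches the minimum weight, which again requires comparing against all orbits, not only the rank-$2$ cone family. Finally, a smaller inaccuracy: your explanation of the crossover --- that for $n=2,3$ the radical is ``too small to carry'' the cone --- is not the right mechanism for $n=3$, where $[\Rad(\theta)]$ is a projective $3$-space and the extremal rank-$2$ configurations do exist; the switch of optimal family at $n=3\to 4$ is a numerical comparison between two weight counts, not a dimension obstruction, and establishing that inequality in both directions is precisely the part of the proof that is missing.
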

\begin{theorem}
  If $n > 2$ then the minimum weight codewords of
  $\cC(\cH_{n,2}^{odd})$ correspond to bilinear alternating forms $\theta$
  with $\dim(\Rad (\theta)) = 2n - 1$
  and such that $[\Rad (\theta)]$ meets $\cH_{n,1}^{odd}$
  in a hermitian cone of the form $[\ell]\cH_{n-1,1}^{even}$ where $[\ell]$ is a
  projective line.
  For $n=2$, the minimum weight codewords of
  $\cC(\cH_{2,2}^{odd})$
  either are of the
  form described above, or correspond to bilinear alternating
  forms $\theta$ with $\dim(\Rad(\theta))=1$ such that the radical $R$
  of $\theta$ is non-singular
  with respect to the hermitian form $\varphi$,
  $R^{\perp_{\varphi}}$ meets $\cH_{2,1}^{odd}$
  in a non-degenerate hermitian surface $\cH_{2,1}^{even}$ and the restriction
  of the hermitian polarity and the symplectic polarity to
  $R^{\perp_{\rho}}$ commute.
\end{theorem}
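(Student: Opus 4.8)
The plan is to translate the minimum-weight problem into an extremal counting problem for alternating forms and then to optimize. Recall that the codewords of $\cC(\cH_{n,2}^{odd})$ correspond, up to a nonzero scalar, to the nonzero elements of $(\bigwedge^2V)^*$, which we identify with the space of $\FF_{q^2}$-alternating bilinear forms $\theta$ on $V$; if $p=\langle v,w\rangle$ is a point of $\cH_{n,2}^{odd}$, the component of the codeword $c_\theta$ indexed by $p$ is $\theta(v,w)$. For $n>2$ the Grassmann embedding is projective, so $\wt c_\theta = N - z(\theta)$, where $z(\theta)$ is the number of totally $\rho$-isotropic $2$-spaces $\langle v,w\rangle$ with $\theta(v,w)=0$. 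By the minimum-distance value $d=q^{8n-8}-q^{6n-6}$ recorded above, a codeword has minimum weight precisely when $z(\theta)$ attains its maximum $N-d$; hence the whole problem reduces to identifying the forms $\theta$ maximizing $z(\theta)$.

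First I would treat the rank-$2$ forms, which are exactly the decomposable ones $\theta=\xi\wedge\eta$ with $\xi,\eta\in V^*$; here $\Rad(\theta)=\ker\xi\cap\ker\eta$ has vector dimension $2n-1$, so $\Pi:=[\Rad(\theta)]$ is a codimension-$2$ subspace of $\PG(V)$. A short computation gives $\theta(v,w)=\xi(v)\eta(w)-\xi(w)\eta(v)$, which vanishes exactly when $\langle v,w\rangle$ meets $\Pi$ nontrivially; thus $z(\theta)$ equals the number of totally $\rho$-isotropic lines of $\PG(V)$ meeting $\Pi$. I would then split this count into the isotropic lines contained in $\Pi$ and those meeting $\Pi$ in a single point, both governed by the (generally degenerate) Hermitian variety $\Pi\cap\cQ$ induced by the restriction $\rho|_{\Rad(\theta)}$, where $\cQ:=\cH_{n,1}^{odd}$. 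Carrying out this count as $\Pi$ ranges over the possible types of codimension-$2$ subspaces, I expect the maximum of $z(\theta)$ over all rank-$2$ forms to be reached exactly when $\Pi\cap\cQ$ is the Hermitian cone $[\ell]\cH_{n-1,1}^{even}$ of the statement, and to coincide with the value $N-d$.

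The main obstacle is to show, for $n>2$, that no form of rank $\ge 4$ can do as well. Here I would stratify the alternating forms by $r:=\dim\Rad(\theta)$ and by the position of $\Rad(\theta)$ and of the nondegenerate part of $\theta$ relative to $\rho$, and bound $z(\theta)$ from above in each stratum. The heuristic is that enlarging the rank shrinks the locus on which $\theta$ is degenerate, and hence the supply of simultaneously $\theta$-isotropic and $\rho$-isotropic lines; turning this into a strict inequality $z(\theta)<N-d$ for every rank-$\ge 4$ form is the delicate point, and is where the precise arithmetic of the Hermitian variety, rather than a soft dimension argument, seems unavoidable. Once rank $2$ is forced, the geometric description of the optimal $\Pi$ from the previous step completes the case $n>2$.

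Finally, for $n=2$ (so $\dim V=5$) the situation is genuinely exceptional, because the embedding sends the lines of $\cH_{2,2}^{odd}$ onto Hermitian curves rather than projective lines; this alters the relation between $\wt c_\theta$ and the zero count and permits a second family to reach the minimum. I would analyze directly in $\PG(4,q^2)$ the rank-$4$ forms $\theta$, whose radical is a single point $R=[r]$: when $R$ is nonsingular for $\rho$, the perp $R^{\perp_\rho}$ is a hyperplane on which $\rho$ induces the nondegenerate Hermitian variety $\cH_{2,1}^{even}$, and $\theta$ restricts there to a nondegenerate alternating form. I would then show that $z(\theta)$ equals the rank-$2$ maximum exactly when the symplectic polarity of $\theta$ and the Hermitian polarity of $\rho$ commute on $R^{\perp_\rho}$, using that in this commuting case the two polar geometries share enough isotropic lines to compensate for the loss incurred by the larger rank. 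Comparing the two families and verifying that together they exhaust the maximizers then yields the stated characterization for $n=2$.
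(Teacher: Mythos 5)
First, a point of reference: the paper you were given does not prove this theorem at all --- it is quoted verbatim from \cite{Cardinali2018} (and the paper's own Remark warns that the published proof there contained an error precisely in the $\cH_{2,2}^{odd}$ case, corrected only in an arXiv addendum). Measured against that proof, your skeleton is the right one: identify codewords with alternating forms $\theta$ on $V$, note $\wt c_\theta = N - z(\theta)$ where $z(\theta)$ counts totally $\rho$-isotropic $2$-spaces on which $\theta$ vanishes, handle the decomposable (rank-$2$) forms via the codimension-$2$ subspace $[\Rad(\theta)]$ and the type of its section with the Hermitian variety, and then rule out higher rank. Your rank-$2$ analysis is correct as far as it goes: $\theta=\xi\wedge\eta$ vanishes on $\langle v,w\rangle$ exactly when that line meets $[\Rad(\theta)]$, and the maximizing section is the cone $[\ell]\cH_{n-1,1}^{even}$. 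But the proposal has a genuine gap where the theorem's content actually lies: the strict inequality $z(\theta) < N-d$ for every form of rank at least $4$ when $n>2$, and the exact determination of which rank-$4$ forms tie the rank-$2$ maximum when $n=2$, are both deferred with ``I expect'' and ``I would show.'' These are not routine: they require the full stratification of $\theta$ by $\dim\Rad(\theta)$ \emph{and} by the mutual position of $\theta$ and $\rho$ (equivalently, the structure of the semilinear map intertwining the two polarities), together with explicit counts of common isotropic lines in each stratum --- the very computation in which the original authors themselves erred for $n=2$. A proposal that postpones exactly this step has not proved the theorem.

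There is also a concrete factual error in your explanation of why $n=2$ is exceptional. You claim that because the embedding maps lines of $\cH_{2,2}^{odd}$ onto Hermitian curves for $m=5$, ``this alters the relation between $\wt c_\theta$ and the zero count.'' It does not: the code is built from the projective system $\vep(\bar{\cP})$, so the weight of $c_\theta$ is always $N$ minus the number of embedded points lying on the hyperplane $[\theta]$, i.e.\ $\wt c_\theta=N-z(\theta)$, whether or not lines map to lines (and indeed the minimum distance formula $d=q^{8n-8}-q^{6n-6}$ in the paper holds uniformly for all $n\geq 2$, with no case split). The second family at $n=2$ appears because the arithmetic of the counts allows certain rank-$4$ forms --- those with nonsingular radical $R$ whose symplectic polarity commutes with the Hermitian one on $R^{\perp_\rho}$ --- to attain the same maximum $z(\theta)$ as the best rank-$2$ forms, not because the weight formula changes. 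Your subsequent direct analysis of rank-$4$ forms in $\PG(4,q^2)$ implicitly uses the correct relation anyway, so the plan is salvageable, but the justification as written is wrong and should be replaced by the counting comparison itself.
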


\subsection{Segre codes and point-hyperplane codes}
Suppose $\PG(V_1)$ and $\PG(V_2)$ are two given projective spaces
of dimensions respectively $m$ and $n$
over the same field $\FF_q$.
Denote their  pointsets respectively by $\cP_1$ and $\cP_2$ and their linesets by $\cL_1$ and $\cL_2.$
The \emph{Segre geometry (of type $(\dim(\PG(V_1)),\dim(\PG(V_2) ))$)} is defined as the point-line geometry  $\cS_{m,n}=(\cP,\cL)$ having as point-set $\cP$ the cartesian product $\cP_1\times\cP_2$ and as lineset the following set:
\[\cL=\{ \{p_1\}\times\ell_2\colon p_1\in\cP_1,\ell_2\in\cL_2\}
\cup
\{\ell_1\times \{p_2\}\colon \ell_1\in\cL_1, p_2\in\cP_2\};
\]
incidence is given by inclusion.

Let now $\sigma$ be an automorphism of $\FF_q$.
The map
\begin{equation}\label{segre emb}
	\begin{array}{ll}
		\vep_{\sigma}\colon &\cS_{m,n}\to\PG(V_1\otimes V_2)\\
		& \vep_{\sigma}([p],[q])=[p\otimes q^{\sigma}]
	\end{array}
\end{equation}	
mapping the point $([p],[q])\in\cP$ to
the projective point $[p\otimes q^{\sigma}]\in\PG(V_1\otimes V_2)$
is a projective embedding of $\cS_{m,n}$ into
$\PG(V_1\otimes V_2)$.

The image of $\vep_{\sigma}$ does not depend on the
choice of the automorphism $\sigma$ and is called the
\emph{Segre variety} $\SbS_{m,n}$.

So, $\SbS_{m,n}$ is a projective system of $\PG(V_1\otimes V_2)$ and we can consider the linear code arising from it. We call it the \emph{Segre code} $\cC(\SbS_{m,n})$.
The weight enumerator of the Segre codes has been determined in~\cite{BGH}.

\begin{prop}\label{min3}
	The Segre code $\cC(\SbS_{m,n})$ is a $[N,k,d]_q$-minimal code with parameters
	\[ N=\frac{(q^{m+1}-1)(q^{n+1}-1)}{(q-1)^2},
	\qquad k=(m+1)(n+1),\qquad d=q^{m+n}. \]
\end{prop}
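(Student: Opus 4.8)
The plan is to separate the two assertions of the proposition: the parameters $N,k,d$ are bookkeeping together with the weight enumerator of~\cite{BGH}, whereas the minimality is the substantive part, and I would obtain it from Theorem~\ref{main thm} by verifying its connectivity hypothesis directly on the Segre collinearity graph. Throughout I assume $m,n\geq 1$, so that $\cS_{m,n}$ genuinely carries both families of lines.

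For the setup, since the image $\SbS_{m,n}$, and hence the code $\cC(\SbS_{m,n})$, does not depend on the chosen automorphism, I would fix $\sigma=\mathrm{id}$ and write $\vep:=\vep_{\mathrm{id}}$. After choosing bases of $V_1$ and $V_2$, a hyperplane $H$ of $\PG(V_1\otimes V_2)$ corresponds to a nonzero bilinear form on $V_1\times V_2$, that is, to a nonzero matrix $M$, and the geometric hyperplane of $\cS_{m,n}$ arising from $\vep$ is
\[ \vep^{-1}(H)=\{([p],[q])\in\cP_1\times\cP_2 : p^{T}Mq=0\}. \]
The key structural observation is that the collinearity graph of $\cS_{m,n}$ is the ``rook's graph'' on $\cP_1\times\cP_2$: two distinct points $([p],[q])$ and $([p'],[q'])$ are collinear exactly when $[p]=[p']$ (they then lie on the line $\{[p]\}\times\ell_2$, for $\ell_2$ the line of $\PG(V_2)$ through $[q],[q']$) or when $[q]=[q']$ (they lie on $\ell_1\times\{[q]\}$).

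The heart of the argument is to show that the subgraph induced on $\cP\setminus\vep^{-1}(H)$ is connected. Let $([p_1],[q_1])$ and $([p_2],[q_2])$ be two points off $H$. Since each lies off $H$, the row vectors $p_1^{T}M$ and $p_2^{T}M$ are nonzero, so $\{[q]:p_i^{T}Mq=0\}$ is a genuine hyperplane of $\PG(V_2)$ for $i=1,2$. Over $\FF_q$ a projective space of dimension $n\geq 1$ is never the union of two hyperplanes, so I can choose a common waypoint $[q_0]$ with $p_1^{T}Mq_0\neq 0$ and $p_2^{T}Mq_0\neq 0$; then both $([p_1],[q_0])$ and $([p_2],[q_0])$ lie off $H$, and I obtain the walk
\[ ([p_1],[q_1])\ \sim\ ([p_1],[q_0])\ \sim\ ([p_2],[q_0])\ \sim\ ([p_2],[q_2]), \]
whose outer steps fix the first coordinate and whose middle step fixes the second coordinate $[q_0]$ (any step in which the two relevant coordinates already coincide is simply omitted). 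As all four points lie off $H$, the walk stays in $\cP\setminus\vep^{-1}(H)$, giving connectivity, and Theorem~\ref{main thm} then yields that $\cC(\SbS_{m,n})$ is minimal. I expect the only delicate point to be precisely this waypoint existence: it is where the hypothesis $n\geq 1$ enters, and it is the reason the statement is not a pure formality.

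Finally, for the parameters I would record that the length is $N=|\cP|=|\cP_1|\,|\cP_2|=\frac{(q^{m+1}-1)(q^{n+1}-1)}{(q-1)^2}$, that the dimension of the code equals the dimension of the embedding, $k=\dim(V_1\otimes V_2)=(m+1)(n+1)$, and that $d=q^{m+n}$ follows from the weight enumerator of the Segre code computed in~\cite{BGH} (equivalently, $q^{m+n}=N-\max_{H}|\SbS_{m,n}\cap H|$ is the complement of the maximal hyperplane section of the Segre variety).
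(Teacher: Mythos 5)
Your proof is correct, but it takes a genuinely different route from the paper. The paper does not verify the connectivity hypothesis of Theorem~\ref{main thm} at all: it works directly with the cutting-set characterization (Proposition~\ref{equiv}), bounding $\dim\langle H\cap\SbS_{m,n}\rangle\leq mn+m+n-1$ for any hyperplane $H$ and then invoking Van Maldeghem's theorem~\cite{VanMaldeghem2024}, which says that the span of the image of \emph{any} geometric hyperplane of $\cS_{m,n}$ under $\vep_\sigma$ has dimension $mn+m+n-1$ or $mn+m+n$; the two facts force $\langle H\cap\SbS_{m,n}\rangle=H$, i.e.\ $\SbS_{m,n}$ is a cutting set. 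Your argument instead checks the hypothesis of Theorem~\ref{main thm} by hand: identifying hyperplanes of $\PG(V_1\otimes V_2)$ with nonzero matrices $M$, observing that the collinearity graph is the rook's graph on $\cP_1\times\cP_2$, and joining any two points off $\vep^{-1}(H)$ through a waypoint $[q_0]$ avoiding the two hyperplanes $\{[q]:p_i^TM q=0\}$ of $\PG(V_2)$ (which exists since $\PG(n,q)$, $n\geq1$, is never the union of two hyperplanes). Your route is self-contained and elementary, replacing the external classification result on hyperplane spans by a three-step rook walk; what the paper's route buys is brevity and, implicitly, more information (the cited theorem pins down the spans of \emph{all} geometric hyperplanes of the Segre geometry, not just the connectivity of complements of those arising from the embedding). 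The degenerate cases (coinciding coordinates, $\,[q_1]=[q_0]$, etc.) and the reduction to $\sigma=\mathrm{id}$ are handled properly, and your treatment of the parameters $N$, $k$, $d$ matches the paper's appeal to~\cite{BGH}.
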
	
\begin{proof}
	We will first prove that the set $\SbS_{m,n}$ is a cutting set of $\PG(V_1\otimes V_2)$ with respect to hyperplanes.
	Let $H$ be a hyperplane of $\PG(V_1\otimes V_2)$ and consider  $\cH:=\vep_{\sigma}^{-1}(H)$. Clearly, $\cH$ is
	a geometric hyperplane of $\cS_{m,n}$.
	
	Since $H$ is a hyperplane of $\PG(V_1\otimes V_2)$, it has dimension $mn+m+n-1$
	and we get
	\[ \dim(\langle H\cap {\SbS_{m,n}}\rangle)=
	\dim(\langle \vep_{\sigma}(\vep_{\sigma}^{-1}(H))\rangle)
	\leq \dim(H)=mn+m+n-1 \]
	
	It is proved in~\cite[pag. 68]{VanMaldeghem2024},  that the
	span of the image of any geometric hyperplane of $\cS_{m,n}$ under
	the embedding $\vep_{\sigma}$
	has dimension
	either $mn+m+n-1$ or $mn+m+n$.
	
	It follows that $\dim(\langle H\cap {\SbS_{m,n}}\rangle)=mn+m+n-1$ and so $\langle H\cap {\SbS_{m,n}}\rangle=H.$
By Proposition~\ref{equiv}, $\cC(\SbS_{m,n})$ is a minimal code.

The parameters of $\cC(\SbS_{m,n})$ follow from~\cite{BGH}.
\end{proof}

Put $V:=V(n+1,q).$ The \emph{point-hyperplane geometry} of $\PG(V)$ is the point-line geometry $\Gamma=(\cP,\cL)$ whose pointset is
\[\cP=\{(p,H)\colon p\in H,\,\,
p \text{ point of }\PG(V), \, \,H\ \text{ hyperplane of }\PG(V)\}\]
% are all the pairs
%$(p,H)\in\PG(V)\otimes\PG(V^*)$ such that $p\in H$
and whose lineset is
\[\cL:=\{\ell_{r,H}\colon r \text{ is a line of }
  \PG(V)\}\cup  \{\ell_{p,S}\colon S \text{ is a subspace of }
  \mathrm{codim}(S)=2\}\]
where $\ell_{r,H}:=\{ (p,H)\in \cP: p\in r \}$ and $\ell_{p,S}:=\{ (p,H)\in \cP : S\subseteq H \}.$

The points $(p,H)$ and $(p',H')$ are collinear in ${\Gamma}$ if and only if $p\in H'$ or $p'\in H$.

The following is known regarding
the geometric hyperplanes of $\Gamma$.
\begin{theorem}[Theorem 1.5, \cite{Pasini24}]
	\label{msub}
	All geometric hyperplanes of ${\Gamma}$ are maximal subspaces.
\end{theorem}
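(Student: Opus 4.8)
The plan is to invoke the characterization of maximal hyperplanes in Proposition~\ref{cmhp}: it suffices to show that for every geometric hyperplane $\cH$ of $\Gamma$ the graph induced by the collinearity graph $G_\Gamma$ on $\Sigma:=\cP\setminus\cH$ is connected. Because of the two line families $\ell_{r,H}$ and $\ell_{p,S}$, two points $(p,H),(p',H')$ lie on a common line exactly when they share the point ($p=p'$) or share the hyperplane ($H=H'$); consequently, for a fixed point $a$ the fibre $\cP_a:=\{(a,H):a\in H\}$ and, for a fixed hyperplane $A$, the fibre $\cP^A:=\{(p,A):p\in A\}$ are cliques of $G_\Gamma$. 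Each fibre is moreover a subspace of $\Gamma$ that is, as a linear space, a $\PG(n-1,q)$ (for $\cP_a$ the ``points'' are the hyperplanes through $a$ and the ``lines'' are the pencils $\ell_{a,S}$; dually for $\cP^A$). Throughout I assume $n\ge 2$, the cases $n\le 1$ being degenerate.

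First I would record the local shape of $\cH$. Since $\cH$ meets every line of $\Gamma$ in exactly one point or in the whole line, its trace on a fibre $\cP_a$ meets every line of that $\PG(n-1,q)$ in one point or all of it; hence $\cH\cap\cP_a$ is either a projective hyperplane of $\cP_a$ or the whole of $\cP_a$ (it cannot be empty, as $\cH$ must meet every line of $\Gamma$). Call a point $a$, or a hyperplane $A$, \emph{live} if its fibre is not entirely contained in $\cH$. Every external flag lies over a live point and a live hyperplane, and over a live point the external part $\cP_a\setminus\cH$ is a nonempty clique, hence lies in a single connected component of $\Sigma$. Thus the connectivity of $\Sigma$ is equivalent to showing that all external flags lie in one component.

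The core step is a three-term bridge. Given external flags $x=(a,A)$ and $y=(b,B)$ with $a\neq b$, I look for a hyperplane $C\supseteq\langle a,b\rangle$ for which both $(a,C)$ and $(b,C)$ are external; then $x-(a,C)-(b,C)-y$ is a path in $\Sigma$, since consecutive flags share a point or a hyperplane and all four are external. Writing $\cF$ for the set of hyperplanes through $\langle a,b\rangle$, a $\PG(n-2,q)$ sitting as a projective hyperplane inside each of $\cP_a$ and $\cP_b$, the hyperplanes $C$ to avoid are $(\cH\cap\cP_a)\cap\cF$ and $(\cH\cap\cP_b)\cap\cF$. Each of these is the intersection of two projective hyperplanes, so it is a proper subspace of $\cF$ unless $\cH\cap\cP_a$ (resp.\ $\cH\cap\cP_b$) already contains all of $\cF$. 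Since a projective space $\PG(m,q)$ with $m\ge 1$ is never the union of two of its hyperplanes, a suitable $C$ exists whenever $n\ge 3$ and neither of these exceptional inclusions occurs.

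It remains to handle the exceptional configurations, and this is the hard part. The inclusion $\cF\subseteq\cH\cap\cP_a$ forces the projective hyperplane $\cH\cap\cP_a$ of $\cP_a$ to coincide with $\cF$; as a hyperplane of $\cP_a$, the subspace $\cF$ corresponds to the point $[b]$ of $\PG(V/\langle a\rangle)$, so this occurs exactly when $b$ lies on one distinguished line $\ell_a$ through $a$ determined by $\cH$. Hence the ``bad'' targets $b$ relative to $a$ (and, symmetrically, the case $a\in\ell_b$) are confined to a single line, and one connects $a$ to $b$ through an auxiliary live point $c$ chosen off $\ell_a\cup\ell_b\cup\langle a,b\rangle$, applying the bridge twice. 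The existence of such a $c$, and of the live intermediate hyperplanes required at each step, is guaranteed by the abundance of points and hyperplanes of $\PG(n,q)$, which in turn rests on $\cH$ being a \emph{proper} subspace. I expect the bookkeeping needed to certify the liveness of every intermediate flag, together with the genuinely small cases $n\le 2$ (where $\cF$ degenerates to a single hyperplane), to be the main technical obstacle; the generic bridge argument itself is immediate from Proposition~\ref{cmhp} and the clique structure of the fibres.
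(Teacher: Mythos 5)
First, a point of comparison: the paper offers no proof of this statement at all --- it is imported verbatim from~\cite{Pasini24} (Theorem~1.5 there) and used as a black box to feed Corollary~\ref{main cor}. So your attempt must be judged as a standalone proof of Pasini's theorem. Its skeleton is sound: reducing maximality to connectedness of $\cP\setminus\cH$ via Proposition~\ref{cmhp} is the right move; the fibres $\cP_a$ and $\cP^A$ are indeed cliques and subspaces of $\Gamma$ isomorphic to $\PG(n-1,q)$; for a live point $a$ the trace $\cH\cap\cP_a$ is a projective hyperplane of that $\PG(n-1,q)$, i.e.\ of the form $\{H:\ell_a\subseteq H\}$ for a distinguished line $\ell_a\ni a$; and the generic bridge (a hyperplane $C\supseteq\langle a,b\rangle$ avoiding two hyperplanes of $\cF\cong\PG(n-2,q)$) is valid for $n\geq 3$. (Your reading of collinearity --- equal point or equal hyperplane --- is also the correct consequence of the stated line set, notwithstanding the paper's own prose, which asserts a different and, given that line set, inconsistent relation.)

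The gap is in the exceptional case, and that is where the entire difficulty of the theorem lives. Concretely: (i) your conditions on the auxiliary point $c$ (off $\ell_a\cup\ell_b\cup\langle a,b\rangle$) do not suffice --- the two bridges through $c$ additionally require that $c$ be live and that $a\notin\ell_c$ and $b\notin\ell_c$, and nothing in your choice of $c$ prevents the same obstruction from recurring at $c$, this time ``from $c$'s side''. (ii) The existence of such a $c$ cannot be settled by appealing to the abundance of points of $\PG(n,q)$: the bad set $\{c\ \text{live}: a\in\ell_c\}$ is governed by the map $c\mapsto\ell_c$, which encodes the structure of an arbitrary abstract geometric hyperplane $\cH$ (you may not assume $\cH$ arises from an embedding, since that classification is itself part of Pasini's work), and a priori nothing bounds its size. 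Controlling this map --- or designing a detour, e.g.\ the dual bridge through a common point of the two hyperplanes $A,B$ together with a proof that the two kinds of bridge cannot fail simultaneously --- is precisely the content a real proof must supply. (iii) The case $n=2$ (the flag geometry of a plane, where $\cF$ is a single point and the ``union of two hyperplanes'' argument is vacuous) is deferred entirely. As it stands, this is a correct plan for the easy half of the argument with the hard half declared to be ``bookkeeping''; it is not a proof.
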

So, by Corollary~\ref{main cor}, we have the following
\begin{prop}\label{min4}
 The code $\cC({\vep}({\Gamma}))$ arising from the projective system ${\vep}({\Gamma})$ is minimal for \emph{any} projective embedding ${\vep}$ of ${\Gamma}.$
\end{prop}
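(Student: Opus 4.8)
The plan is to deduce the statement directly from Corollary~\ref{main cor}. That corollary asserts that if $\Gamma=(\cP,\cL)$ is a point-line geometry \emph{all} of whose geometric hyperplanes are maximal subspaces, then for \emph{every} projective embedding $\vep$ of $\Gamma$ the code $\cC(\vep(\Gamma))$ is minimal. Thus the whole task reduces to verifying that the point-hyperplane geometry $\Gamma$ of $\PG(V)$ satisfies the hypothesis of Corollary~\ref{main cor}, namely that each of its geometric hyperplanes is maximal among the proper subspaces of $\Gamma$.

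First I would observe that this hypothesis is precisely the content of Theorem~\ref{msub} (Pasini's Theorem~1.5), which states that all geometric hyperplanes of $\Gamma$ are maximal subspaces. Since $\Gamma$ is a point-line geometry in the sense of Section~\ref{Sec Point line geometries} --- its points are the flags $(p,H)$ with $p\in H$, and its lines are the pencils $\ell_{r,H}$ and $\ell_{p,S}$ described above --- the two maximality conditions coincide verbatim. Feeding this into Corollary~\ref{main cor} yields at once that $\cC(\vep(\Gamma))$ is minimal for any projective embedding $\vep$, which is the assertion.

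It is worth stressing where the real work sits: the entire weight of the argument is carried by Theorem~\ref{msub}, whose proof rests on a combinatorial analysis of the collinearity graph of $\Gamma$ and which is used here as a black box. Consequently there is no genuine obstacle to overcome at this stage; the only point requiring attention is the bookkeeping check that the hypotheses match. Should one prefer a route internal to the present paper, an equivalent path is available: by Proposition~\ref{cmhp}, the maximality of a geometric hyperplane $\cH$ is equivalent to the connectedness of the graph induced by the collinearity graph of $\Gamma$ on $\cP\setminus\cH$; verifying this connectedness for the complement of every geometric hyperplane --- using the collinearity rule that $(p,H)$ and $(p',H')$ are collinear exactly when $p\in H'$ or $p'\in H$ --- would then let one invoke Theorem~\ref{main thm} in place of Corollary~\ref{main cor}. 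This connectedness verification is, however, essentially a reproof of Theorem~\ref{msub}, so the most economical argument is the direct citation above.
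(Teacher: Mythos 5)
Your proposal is correct and coincides with the paper's own argument: the paper likewise invokes Theorem~\ref{msub} (Pasini's result that all geometric hyperplanes of the point-hyperplane geometry $\Gamma$ are maximal subspaces) and then applies Corollary~\ref{main cor} to conclude minimality for every projective embedding.
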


Let $\sigma$ be a non-trivial automorphism of $\FF_q$ and denote by $V^*$ the dual space of $V$. Recall the notation defined at the beginning of Section~\ref{sec 2.2} and consider the map defined as follows, where $W$ is the subspace of $V\otimes V^*$ spanned by the image:
\[{\vep}_{\sigma}:\Gamma\to \PG(W),\,\,([x],[\xi])\mapsto [x \otimes \xi^{\sigma}].\]
The map ${\vep}_{\sigma}$ is a projective embedding of $\Gamma.$
Observe that $V\otimes V^*$ is isomorphic to the vector space
$M_{n+1}(q)$ of all $(n+1)\times(n+1)$ matrices with entries in $\FF_q$.
This vector space is equipped with a natural non-degenerate
bilinear form, the \emph{saturation form} given by
\[ M\cdot N \to \mathrm{Tr}(MN). \]
Define
\begin{equation}
	\label{eS}
	\Lambda_{\sigma}:={\vep}_{\sigma}(\cP) =\{[x\otimes \xi^\sigma]\colon  [x]\in\PG(V), [\xi]\in\PG(V^*)\}.
\end{equation}
Clearly, the set $\Lambda_{\sigma}$ is a projective system of $\PG(W)$ which gives rise to a linear code $\cC(\Lambda_{\sigma}).$

If $\sigma=1$, then  $W$ is exactly the hyperplane of $V\otimes V^*\cong M_n(q)$
given by the equation $ \mathrm{Tr}(X)=0$
and $\dim(\vep_1)= (n+1)^2-1$; $\vep_{1}$ is called the Segre embedding of $\Gamma$ and the related code  $\cC(\Lambda_{1})$ is called the
\emph{point-hyperplane code from the Segre embedding}.

If $\sigma\not=1$ then  $W=V\otimes V^*$ and $\dim(\vep_{\sigma})= (n+1)^2$; $\vep_{\sigma}$ is called the twisted embedding of $\Gamma$ and the related code  $\cC(\Lambda_{\sigma})$ is called the
\emph{point-hyperplane code from the twisted embedding}.

In~\cite{Rodier2003}, independently in \cite{CL-long1}, the following is proved:
\begin{theorem} \label{main thm 1}
The $[N_1,k_1,d_1]$-linear code $\cC(\Lambda_1)$ associated to $\Lambda_1$ is minimal and has parameters \[N_1=\frac{(q^{n+1}-1)(q^n-1)}{(q-1)^2},
\qquad k_1=n^2+2n,\qquad d_1=q^{2n-1} -q^{n-1}.\]
\end{theorem}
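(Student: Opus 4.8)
The statement to prove is Theorem~\ref{main thm 1}, asserting that the point-hyperplane code $\cC(\Lambda_1)$ from the Segre embedding is minimal and computing its three parameters. The minimality is actually already within reach from the machinery developed in the excerpt, so the plan is to separate the minimality claim from the parameter computation and dispatch each independently. For minimality, the key is that Theorem~\ref{msub} (Pasini) tells us that \emph{all} geometric hyperplanes of $\Gamma$ are maximal subspaces. Since $\vep_1$ is a genuine projective embedding of $\Gamma$ (this is asserted in the text for $\vep_\sigma$ in general), Proposition~\ref{min4} applies verbatim: by Corollary~\ref{main cor}, $\cC(\vep_1(\Gamma))=\cC(\Lambda_1)$ is minimal for this (indeed any) embedding. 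So the minimality half costs essentially nothing beyond invoking the already-established results.

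\textbf{Parameter computation.} The three numbers require separate arguments. For the length $N_1=|\cP|$, I would count the pairs $(p,H)$ with $p\in H$: there are $(q^{n+1}-1)/(q-1)$ points of $\PG(V)$, and through each fixed point the hyperplanes containing it form a $\PG(n-1,q)$, giving $(q^{n}-1)/(q-1)$ of them, hence $N_1=\frac{(q^{n+1}-1)(q^n-1)}{(q-1)^2}$ after multiplying (equivalently count flags the other way). For the dimension $k_1=\dim(\vep_1)$, the excerpt already states that for $\sigma=1$ the embedding lands in the hyperplane $\mathrm{Tr}(X)=0$ of $V\otimes V^*\cong M_{n+1}(q)$, so $\dim(\vep_1)=(n+1)^2-1=n^2+2n$; I would verify that $\Lambda_1$ genuinely spans this trace-zero hyperplane, i.e.\ that the span of the rank-one matrices $x\otimes\xi$ with $\langle x,\xi\rangle=0$ is exactly the trace-zero subspace, which is a short linear-algebra check using that $\langle \xi,x\rangle=\mathrm{Tr}(x\otimes\xi)$.

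\textbf{The minimum distance.} The genuinely substantive step is $d_1=q^{2n-1}-q^{n-1}$. By the translation recorded in Section~\ref{Sec Point line geometries}, $d_1=|\cP|-\max_H|\cH|$ where $\cH=\vep_1^{-1}(H)$ ranges over geometric hyperplanes of $\Gamma$ arising from $\vep_1$. A hyperplane $H$ of $\PG(W)$ is a trace-zero matrix $A$ via the saturation form, and a point $([x],[\xi])$ with $x\in\ker\xi$ lies on $H$ precisely when $\mathrm{Tr}\bigl(A\,(x\otimes\xi)\bigr)=\xi(Ax)=0$. Thus $\max_H|\cH|$ is the maximum, over nonzero trace-zero $A$, of the number of incident point-hyperplane flags $([x],[\xi])$ satisfying simultaneously $\xi(x)=0$ and $\xi(Ax)=0$. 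I would attack this by fixing the point $[x]$ and counting the $[\xi]$ with $\xi(x)=\xi(Ax)=0$: these are hyperplanes containing $x$ and $Ax$, so their number depends on whether $Ax\in\langle x\rangle$ (giving a $\PG(n-1,q)$ of choices) or not (giving a $\PG(n-2,q)$). The count therefore reduces to understanding the fixed-space geometry of the linear map $A$ acting on $\PG(V)$, and the extremum is achieved by the $A$ whose projectivization fixes the largest possible set of points, subject to $\mathrm{Tr}(A)=0$. \textbf{This is where the real work lies}: one must optimize over all trace-zero $A$, which amounts to a rational-canonical-form case analysis showing the maximizing $A$ is (up to scalar) a transvection-like or near-identity matrix, and then arithmetic to confirm the resulting flag count yields $\max_H|\cH|=N_1-(q^{2n-1}-q^{n-1})$. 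Since the result is already proved in~\cite{CL-long1,CL-long2}, I would present the minimality and the length/dimension computations in full and then cite those papers for the minimum-distance value, remarking that the distance follows from the flag-counting extremal problem just described.
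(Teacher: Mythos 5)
Your proposal is correct and follows essentially the same route as the paper: the paper likewise obtains the minimality of $\cC(\Lambda_1)$ from Pasini's Theorem~\ref{msub} together with Corollary~\ref{main cor} (this is precisely Proposition~\ref{min4}) and cites \cite{CL-long1,CL-long2} for the parameters, exactly as you do for the minimum distance. Your supplementary verifications of $N_1$ (the flag count) and $k_1$ (the rank-one matrices $x\otimes\xi$ with $\xi(x)=0$ span the trace-zero hyperplane) are correct and routine; the only imprecision is your side remark identifying hyperplanes of $\PG(W)$ with trace-zero matrices, which fails when $\cha\FF_q$ divides $n+1$ (they correspond to matrices modulo $\langle I\rangle$ up to scalar), but this occurs only in the sketch of the part you defer to the citation, so it is not a gap in the proposal as given.
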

When the automorphism $\sigma$ is non-trivial, it is
shown in in~\cite{CL-long2} that
\begin{theorem}
\label{main thm 4}
Suppose now $\sigma\in\Aut(\FF_q)$ with $\sigma\neq 1$.
The code $\cC(\Lambda_{\sigma})$ associated to $\Lambda_{\sigma}$
is minimal and it has
parameters
\begin{itemize}
	\item
	if $n>2$ or $\sigma^2\neq1$:
	\[ N_1=\frac{(q^{n+1}-1)(q^n-1)}{(q-1)^2},
	\qquad k_1=n^2+2n+1,\qquad d_1=q^{2n-1} -q^{n-1}\]
	\item
	if $n=2$ and $\sigma^2=1$:
	\[ N_1=\frac{(q^{n+1}-1)(q^n-1)}{(q-1)^2},
	\qquad k_1=n^2+2n+1,\qquad d_1=q^{3}-\sqrt{q}^3.\]
\end{itemize}
\end{theorem}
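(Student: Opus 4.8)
The plan is to obtain minimality for free and to reduce both $k_1$ and $d_1$ to counting a single type of point. Since $\vep_{\sigma}$ is a projective embedding of the point-hyperplane geometry $\Gamma$ and, by Theorem~\ref{msub}, every geometric hyperplane of $\Gamma$ is a maximal subspace, the minimality of $\cC(\Lambda_{\sigma})$ is immediate from Proposition~\ref{min4}; so the whole proof concerns the parameters. I would work in the matrix model $W=V\otimes V^{*}\cong M_{n+1}(\FF_q)$, under which $\vep_{\sigma}([x],[\xi])=[x\otimes\xi^{\sigma}]$ is a rank-one matrix and, through the saturation form, every hyperplane of $\PG(W)$ is $H_A:=\{M:\mathrm{Tr}(AM)=0\}$ for a nonzero $A$. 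The length $N_1=|\cP|$ is just the number of incident point--hyperplane pairs of $\PG(n,q)$, namely $\frac{q^{n+1}-1}{q-1}\cdot\frac{q^{n}-1}{q-1}$. For $k_1$ I must check $\langle\Lambda_{\sigma}\rangle=\PG(W)$: using $\mathrm{Tr}(A\,x\otimes\xi^{\sigma})=\xi^{\sigma}Ax$, a matrix $A$ annihilates every point of $\Lambda_{\sigma}$ iff $\xi^{\sigma}Ax=0$ for all flags, and fixing $[x]$ and letting $\xi$ run through the annihilator of $x$ this says exactly $Ax\in\langle x^{\sigma}\rangle$ for \emph{every} $x$. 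Since $\sigma\neq1$, no nonzero $A$ can make the projectivity $[x]\mapsto[Ax]$ agree with the genuinely semilinear Frobenius collineation $[x]\mapsto[x^{\sigma}]$ (a short argument on $\ker A$ reduces to the invertible case, then to the fundamental theorem of projective geometry), so $A=0$ and $k_1=\dim W=(n+1)^2=n^2+2n+1$.

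Next I would compute $|\Lambda_{\sigma}\cap H_A|$. Setting $y:=(Ax)^{\sigma^{-1}}$, the point $\vep_{\sigma}([x],[\xi])$ lies in $H_A$ iff $\xi^{\sigma}Ax=0$, which upon applying $\sigma^{-1}$ reads $\xi(y)=0$; together with the flag condition $\xi(x)=0$ this says that $\xi$ annihilates both $x$ and $y$. Hence, for fixed $[x]$, the admissible $[\xi]$ number $\frac{q^{n}-1}{q-1}$ when $y\in\langle x\rangle$ and $\frac{q^{n-1}-1}{q-1}$ otherwise. Calling $[x]$ \emph{special} (for $A$) when $Ax\in\langle x^{\sigma}\rangle$ and writing $s(A)$ for the number of special points, summation over the $P:=\frac{q^{n+1}-1}{q-1}$ points of $\PG(V)$ gives, after simplification, $N_1-|\Lambda_{\sigma}\cap H_A|=q^{n-1}\bigl(P-s(A)\bigr)$. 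By~\eqref{min distance 1} this yields
\[ d_1=q^{n-1}\bigl(P-s_{\max}\bigr),\qquad s_{\max}:=\max_{A\neq0}s(A), \]
so everything reduces to maximising the number of points $[x]$ with $Ax$ parallel to $x^{\sigma}$.

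The extremal configurations are easy to produce. For a rank-one $A=uv$ the whole hyperplane $\PG(\ker A)$ is special (there $Ax=0$), while off the kernel $Ax\in\langle u\rangle$ forces $x^{\sigma}\in\langle u\rangle$, contributing the single extra special point $[u^{\sigma^{-1}}]$; a generic choice thus gives $s(A)=\frac{q^{n}-1}{q-1}+1$, for which $q^{n-1}(P-s(A))=q^{2n-1}-q^{n-1}$. When $\sigma^{2}=1$ (so $\FF_q$ is a square field and $\sigma$ is the involution $t\mapsto t^{\sqrt q}$) the choice $A=I$ instead makes the special points exactly the fixed points of the Frobenius collineation, i.e.\ the Baer subgeometry $\PG(n,\sqrt q)$ of size $\frac{\sqrt q^{\,n+1}-1}{\sqrt q-1}$. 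Comparing the two, the rank-one value dominates for all $n\geq3$ and, for $n=2$, also when $\sigma^{2}\neq1$ (there any fixed subgeometry $\PG(2,q_0)$ has $q_0=q^{1/r}\leq q^{1/3}$ and is too small); only for $n=2$ and $\sigma^{2}=1$ does the Baer plane win, giving $s_{\max}=q+\sqrt q+1$ and $d_1=q^{3}-\sqrt q^{\,3}$. This reproduces the two cases of the statement.

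The hard part is the matching upper bound: that no $A$ has more special points than these configurations. The non-kernel special points are precisely the points where the projectivity $[A]$ agrees with the Frobenius collineation, i.e.\ the fixed points of a $\sigma^{-1}$-semilinear collineation, and the relation $Ax\parallel x^{\sigma}$ is the simultaneous vanishing of all $2\times2$ minors of $\binom{Ax}{x^{\sigma}}$, a determinantal (hence small) locus rather than a single Hermitian equation. Turning this into the sharp bounds $s(A)\leq\frac{q^{n}-1}{q-1}+1$ and, in the exceptional case, $s(A)\leq q+\sqrt q+1$, requires a case analysis on $\rank A$ together with the classification of fixed-point sets of semilinear collineations; the most delicate point is the involutory case $n=2$, where one must exclude the possibility that the special locus is as large as a Hermitian curve and show that a Baer involution is extremal. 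Once this bound is in hand, substituting $s_{\max}$ into $d_1=q^{n-1}(P-s_{\max})$ finishes the proof.
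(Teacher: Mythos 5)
Your reduction is sound and, in its skeleton, matches how this result is actually established: minimality follows exactly as in the paper from Pasini's theorem that all geometric hyperplanes of $\Gamma$ are maximal (Theorem~\ref{msub} together with Corollary~\ref{main cor}, i.e.\ Proposition~\ref{min4}), the length is the flag count, the spanning argument for $k_1=(n+1)^2$ is correct and completable, and your identity for the weight of the codeword attached to $A\neq 0$, namely $q^{n-1}\bigl(P-s(A)\bigr)$ with $P=\frac{q^{n+1}-1}{q-1}$ and $s(A)$ the number of points $[x]$ with $Ax\in\langle x^{\sigma}\rangle$, is the right translation of the minimum-distance problem.

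However, there is a genuine gap, and it sits exactly where the content of the theorem lies: you never prove the matching upper bound $s(A)\le\frac{q^{n}-1}{q-1}+1$ for \emph{every} nonzero $A$ (respectively $s(A)\le q+\sqrt q+1$ when $n=2$ and $\sigma^{2}=1$). Your extremal configurations (rank-one $A$, and $A=I$ in the Baer case) only give $d_1\le q^{2n-1}-q^{n-1}$, resp.\ $d_1\le q^{3}-\sqrt{q}^{3}$; equality is the hard direction. This is not a routine verification: for invertible $A$ the special points are the fixed points of the $\sigma^{-1}$-semilinear collineation $[x]\mapsto[(Ax)^{\sigma^{-1}}]$ and one needs the classification of such fixed-point sets; for singular $A$ the entire kernel is special, so one must trade kernel size against off-kernel fixed points rank by rank; and in the case $n=2$, $\sigma^{2}=1$ one must exclude special loci of Hermitian-curve size $q\sqrt q+1$, which would make $d_1$ strictly smaller than claimed. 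You explicitly flag this maximization as ``the hard part'' and defer it --- but that bound \emph{is} the theorem. Note also that the paper under review does not prove the parameters internally either: it quotes them from the companion papers~\cite{CL-long1, CL-long2}, where precisely this maximization of $s(A)$ is carried out; only the minimality statement is proved in the present paper, by the same argument you give.
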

A characterization of minimum weight codewords is also available; we refer to~\cite{CL-long1} and~\cite{CL-long2} for the definition of the singular, quasi singular and spread-type hyperplanes of the point-line geometry of $\PG(V).$
\begin{theorem}
\begin{enumerate}
	\item
	If $n>2$ or $\sigma^2\neq1$ or $\sigma=1$,
	then the minimum weight codewords of $\cC(\Lambda_{\sigma})$
	have weight $q^{2n-1}-q^{n-1}$ and  correspond
	to quasi-singular, non-singular hyperplanes of $\Gamma.$ The second lowest weight codewords of $\cC(\Lambda_{\sigma})$ have weight
	$q^{2n-1}$ and  correspond to singular hyperplanes
	of ${\Gamma}$.
	\item
	If $n=2$ and $\sigma^2=1$, $\sigma\neq1$
	then the minimum weight codewords of $\cC(\Lambda_{\sigma})$
	have weight $q^3-\sqrt{q}^3$ and
	correspond
	to the hyperplanes of ${\Gamma}$
        of the form $\mathrm{Tr}(XM)=0$
        associated to matrices $M$ for which there exist three linearly independent vectors
	$\xi_1,\xi_2,\xi_3
	\in V^*$ and
	$\alpha,\beta,\gamma\in\FF_q$ with $N(\alpha)=N(\beta)=N(\gamma)$
	such that
	\[ \xi_1M=\alpha\xi_1^{\sigma},\qquad
	\xi_2M=\beta\xi_2^{\sigma},\qquad
	\xi_3M=\gamma\xi_3^{\sigma}.
	\]
	\item\label{pt4-13} If $\sigma=1$ or both $q$ and $n$ are odd, then the maximum weight codewords of $\cC(\Lambda_{\sigma})$
	have weight $q^{n-1}(q^{n+1}-1)/(q-1)$. Every semi-standard spread type hyperplane of ${\Gamma}$ (in
	particular $\sigma^2=1$ and $n$ is odd) is associated to a maximum weight codeword of $\cC(\Lambda_{\sigma})$.
 \end{enumerate}
\end{theorem}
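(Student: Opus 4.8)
The plan is to convert each extremal weight into a purely combinatorial invariant of the matrix $M$ representing the codeword and then to optimize that invariant over the relevant conjugacy classes. Recall that a codeword of $\cC(\Lambda_\sigma)$ is a hyperplane $\Tr(XM)=0$ of $\PG(W)$, with $M$ determined up to a nonzero scalar (and, when $\sigma=1$, only modulo $\langle I\rangle$, since there $W=\{\Tr X=0\}$). A point $([x],[\xi])\in\cP$ (so $\xi(x)=0$) lies on this hyperplane exactly when $\Tr((x\otimes\xi^\sigma)M)=\xi^\sigma(Mx)=0$. Hence the weight equals $|\cP|=N$ minus the number of such incident pairs, and the whole theorem reduces to a counting problem.

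First I would carry out the count one point at a time. Fixing $[x]\in\PG(V)$, the linear functional $\xi\mapsto\xi(x)$ and the $\sigma$-semilinear map $\xi\mapsto\xi^\sigma(Mx)$ both cut out codimension-one subspaces of $V^*$: for the second, applying $\sigma^{-1}$ shows its kernel coincides with that of the ordinary functional attached to $(Mx)^{\sigma^{-1}}$. These two hyperplanes of $\PG(V^*)$ coincide precisely when $(Mx)^{\sigma^{-1}}\in\langle x\rangle$, i.e. when $Mx\in\langle x^\sigma\rangle$, and otherwise meet in a codimension-two subspace. Summing the two possible contributions over all $[x]$ gives the closed formula
\[ \wt(M)=q^{n-1}\bigl(P-E_\sigma(M)\bigr),\qquad P:=\frac{q^{n+1}-1}{q-1}, \]
where $E_\sigma(M):=|\{[x]\in\PG(V):Mx\in\langle x^\sigma\rangle\}|$ counts the \emph{$\sigma$-eigenpoints} of $M$ (ordinary eigenpoints when $\sigma=1$); in the dual picture these are exactly the $[\xi]$ satisfying $\xi M=\alpha\xi^\sigma$ from statement~(2). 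Thus minimum weight corresponds to maximal $E_\sigma$ and maximum weight to minimal $E_\sigma$.

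Next I would classify the values of $E_\sigma$ over the relevant orbits of $M$ (ordinary $\GL(V)$-conjugacy when $\sigma=1$, the appropriate $\sigma$-twisted classes otherwise). In the generic range ($\sigma=1$, or $\sigma^2\neq1$, or $n>2$) an inspection of the canonical forms should show that among non-scalar matrices $E_\sigma$ is maximized, with value $\tfrac{q^n-1}{q-1}+1$, by the configurations corresponding to the quasi-singular non-singular hyperplanes, yielding minimum weight $q^{2n-1}-q^{n-1}$; the next value $\tfrac{q^n-1}{q-1}$ is attained by the singular hyperplanes (a single eigenvalue with an $n$-dimensional eigenspace but a nontrivial Jordan block) and gives weight $q^{2n-1}$, every remaining class producing a strictly smaller $E_\sigma$, which identifies $q^{2n-1}$ as the genuine second lowest weight. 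At the opposite extreme $E_\sigma=0$ corresponds to matrices with no $\sigma$-eigenpoint; under the hypotheses of~(3) these are realized by the semi-standard spread-type hyperplanes — for $\sigma=1$ simply the companion matrices of degree-$(n+1)$ irreducible polynomials — giving the maximum weight $q^{n-1}P=q^{n-1}(q^{n+1}-1)/(q-1)$.

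The main obstacle is the exceptional case $n=2$, $\sigma^2=1$ of statement~(2). There $\sigma$ is the involutory automorphism of $\FF_q=\FF_{(\sqrt{q})^2}$, and the semilinear eigenvalue condition $Mx\in\langle x^\sigma\rangle$ is governed by unitary rather than linear geometry: a matrix can now carry a whole Baer subplane $\PG(2,\sqrt{q})$ worth of $\sigma$-eigenpoints, so $E_\sigma$ can reach $q+\sqrt{q}+1$, strictly exceeding the value $q+2$ from the rank-one analysis above. Proving that $q+\sqrt{q}+1$ is in fact the maximum, and that it is attained exactly by the matrices admitting three independent $\sigma$-eigenvectors $\xi_1,\xi_2,\xi_3$ with $N(\alpha)=N(\beta)=N(\gamma)$ for the norm $N\colon\FF_q\to\FF_{\sqrt{q}}$, is the delicate step: it amounts to counting the $\sigma$-eigenpoints of a Hermitian-type configuration and excluding any larger such set. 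This produces the minimum weight $q^3-\sqrt{q}^3=q\bigl(P-(q+\sqrt{q}+1)\bigr)$ and explains why this case departs from the generic one.
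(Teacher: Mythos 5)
A preliminary caveat: the paper does not prove this theorem at all --- it is quoted from the companion papers \cite{CL-long1} and \cite{CL-long2}, whose proofs are not reproduced here --- so your attempt can only be judged on its own merits, not compared with an argument in this text. Your opening reduction is correct and cleanly done: representing a codeword as $X\mapsto \mathrm{Tr}(XM)$, a point $([x],[\xi])$ of $\Gamma$ lies on the corresponding hyperplane iff $\xi^{\sigma}(Mx)=0$, equivalently $\xi\bigl((Mx)^{\sigma^{-1}}\bigr)=0$, and summing over the points $[x]$ according to whether $Mx\in\langle x^{\sigma}\rangle$ or not yields $\wt(M)=q^{n-1}\bigl(P-E_{\sigma}(M)\bigr)$ with $P=(q^{n+1}-1)/(q-1)$. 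All the numerical values in the statement are consistent with this formula (the three weights in parts (1)--(3) correspond to $E_{\sigma}=\frac{q^n-1}{q-1}+1$, $\frac{q^n-1}{q-1}$ and $0$, and the exceptional weight in part (2) to $E_{\sigma}=q+\sqrt{q}+1$), so the theorem is indeed equivalent to classifying the extremal values of $E_{\sigma}$.

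That classification, however, \emph{is} the content of the theorem, and your proposal asserts it rather than proves it. (a) Even for $\sigma=1$ the maximization over eigenspace dimensions is only gestured at (``an inspection of the canonical forms should show''); more seriously, for $\sigma\neq 1$ the set of $\sigma$-eigenpoints is the fixed-point set of a semilinear collineation, and such sets can be subgeometries over subfields. You confront this phenomenon only in case (2), but in case (1) with $\sigma\neq 1$ (automorphism of order $d\geq 3$, or $d=2$ with $n>2$) you must still rule it out: one has to check that counts of order $q^{n/d}$, and mixed configurations spread over several eigenvalue classes, stay strictly below $\frac{q^n-1}{q-1}$, and that no class produces a value strictly between the two top ones (otherwise the ``second lowest weight'' claim fails). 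None of this is in the proposal. (b) In part (3) you never engage the hypothesis ``$\sigma=1$ or both $q$ and $n$ odd''. That hypothesis is present precisely because $E_{\sigma}=0$ is not always attainable when $\sigma\neq 1$: for instance a semilinear involution of $\PG(n,q)$ with $n$ even necessarily has fixed points, so without an existence argument for fixed-point-free $\sigma$-collineations (and their identification with the semi-standard spread-type hyperplanes) the maximum weight could be strictly smaller than $q^{n-1}(q^{n+1}-1)/(q-1)$; your companion-matrix construction only covers $\sigma=1$. (c) In case (2) you yourself flag the decisive step --- that $q+\sqrt{q}+1$ is the true maximum and is attained exactly by the matrices admitting three independent $\sigma$-eigenvectors with eigenvalues of equal norm --- as ``the delicate step'' and leave it undone. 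In summary: a correct and promising reduction, followed by a statement of what remains to be proven rather than a proof of it.
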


 \begin{remark} \label{esempio sec4}
 From~\cite{CL-long1} and~\cite{CL-long2} we know that the minimum weight $w_{\min}$ and the maximum weight $w_{\max}$ of the point-hyperplane codes $\cC(\Lambda_{\sigma})$ are
 \[w_{\min}=q^{2n-1}-q^{n-1};\qquad w_{\max}=q^{n-1}(q^{n+1}-1)/(q-1)\]
  so $\frac{w_{\max}}{w_{\min}}=\frac{ q^{n+1}-1}{q^n-1}$
does not satisfy the Ashikhmin and Barg bound~\eqref{ab-bound}, even if  $\cC(\Lambda_{\sigma})$ are minimal codes.
 \end{remark}

Theorem~\ref{c:minC} follows from Theorem~\ref{min1}, Proposition~\ref{min2}, Proposition~\ref{min3}, Proposition~\ref{min4}.

\begin{remark}
  The Segre code of \cite{CL-long1}  mentioned in this paragraph
  had	already been previously studied by F. Rodier in~\cite{Rodier2003}. We were not aware of this work
  and we thank the anonymous reviewer of  the present paper for having brought it to our attention.
  In~\cite{Rodier2003} the author computed the parameters of the Segre
  code, as well as its weight spectrum and the number of codewords of minimum weight.
  The characterization of the weight of the codewords in~\cite{Rodier2003} and \cite{CL-long1} is
	also  the same.
\end{remark}

\section{Open problems}
There are several interesting open problems about the codes presented in
this paper; here we mention the main ones.
\begin{enumerate}
\item Determine some of the possible Hamming weights for Grassmann codes when $k>3$; in particular
  the second smallest weight of Grassmann codes has been recently determined
  by~\cite{Datta2025}, but little information is available about their weight
  spectrum. With respect to the minimality of codes, it is interesting to
  determine in general the \emph{maximum} weight of Grassmann codes.
%\item Study orthogonal polar Grassmann codes in the cases in which the
%  defect of the polar space is either $\delta=0$ (hyperbolic) or
%  $\delta=2$ (elliptic).
\item Determine the minimum distance and maximum weight for
  polar Grassmann codes when $k>3$ in the symplectic, orthogonal
  and hermitian cases.
\item Consider in detail codes arising from the embeddings of
  flag geometries of type different from $(1,n-1)$. In particular
  the cases
  $(1,t)$ with $t<n-1$ and the case $(2,n-2)$ are of interest
  as starting points.
\item Study the codes arising from the  Pl\"ucker embedding of
  $J$-Grassmann geometries of polar spaces.
%\item Investigate
%  the projective embeddings of $J$-Grassmann geometries in general, and
%  study the related projective systems.
\item Provide efficient algorithms for the codes arising from geometries; in particular
  provide point-enumerators to facilitate the encoding and local decoding algorithms.
   For Grassmann codes, see~\cite{Beelen2021}. For polar Grassmann
  codes in the
  case $k=2$, see e.g. the approach of~\cite{Impl0, Impl1}.
\item Provide a geometric characterization of quasi-minimal codes.
\end{enumerate}

\section*{Thanks}
The authors thank the anonymous reviewer of the first version of this paper
for the careful reading of the paper and the precious suggestions offered.

%\bibliography{Minimal-codes}
%\bibliographystyle{amsplain}

%\providecommand{\MRhef}[2]{\relax}
%\renewcommand{\MR}[1]{\relax}
%\providecommand{\href}[2]{#2}

\providecommand{\bysame}{\leavevmode\hbox to3em{\hrulefill}\thinspace}
\providecommand{\MR}{\relax\ifhmode\unskip\space\fi MR }
% \MRhref is called by the amsart/book/proc definition of \MR.
\providecommand{\MRhref}[2]{%
  \href{http://www.ams.org/mathscinet-getitem?mr=#1}{#2}
}
\providecommand{\href}[2]{#2}
\renewcommand{\MR}[1]{\relax}
\renewcommand{\MRhref}[2]{\relax}

\end{document}